\setlist[description]{style=multiline,topsep=4pt,align=parright}
\numberwithin{equation}{section}
\newcommand*\samethanks[1][\value{footnote}]{\footnotemark[#1]\,\,}
\title{Sharp Oracle Inequalities for Low-complexity Priors}
\author{Tung Duy Luu\thanks{Normandie Univ, ENSICAEN, CNRS, GREYC, France, Email: \{duy-tung.luu, ~Jalal.Fadili\}@ensicaen.fr.} \and Jalal Fadili\samethanks \and Christophe Chesneau\thanks{Normandie Univ, UNICAEN, CNRS, LMNO, France, Email: christophe.chesneau@unicaen.fr.}}
\date{}
\begin{document}

\maketitle

\begin{abstract}
In this paper, we consider a high-dimensional statistical estimation problem in which the the number of parameters is comparable or larger than the sample size. We present a unified analysis of the performance guarantees of exponential weighted aggregation and penalized estimators with a general class of data losses and priors which encourage objects which conform to some notion of simplicity/complexity. More precisely, we show that these two estimators satisfy sharp oracle inequalities for prediction ensuring their good theoretical performances. We also highlight the differences between them. When the noise is random, we provide oracle inequalities in probability using concentration inequalities. These results are then applied to several instances including the Lasso, the group Lasso, their analysis-type counterparts, the $\ell_\infty$ and the nuclear norm penalties. All our estimators can be efficiently implemented using proximal splitting algorithms.
\end{abstract}

\begin{keywords}
{High-dimensional estimation, exponential weighted aggregation, penalized estimation, oracle inequality, low-complexity models.}
\end{keywords}

\begin{AMS}
{62G07}
{62G20}
\end{AMS}


\section{Introduction} \label{sec:intro}

\subsection{Problem statement} \label{subsec:ps}

Our statistical context is the following. Let $\yy=(\yy_1,\yy_2,\cdots,\yy_n)$ be $n$ identically distributed observations with common marginal distribution, and $\XX \in \RR^{n \times p}$ a deterministic design matrix. The goal to estimate a parameter vector $\xx \in \RR^p$ of the observations marginal distribution based on the data $\yy$ and $\XX$. 

Let $F: \RR^n \times \RR^n \to \RR$ be a loss function supposed to be smooth and convex that assigns to each $\xx \in \RR^p$ a cost $F(\XX\xx,\yy)$. Let $\xx_0 \in \Argmin_{\xx \in \RR^p}\EE{}{F(\XX\xx,\yy)}$ be any minimizer of the population risk. We regard $\xx_0$ as the true parameter. A usual instance of this statistical setting is the standard linear regression model based on $n$ pairs $(\yy_i,\XX_i)$ of response-covariate that are linked linearly $\yy=\XX\xx_0+\bsxi$, and $F(\bsu,\yy)=\tfrac{1}{2}\norm{\yy-\bsu}{2}^2$. 


Our goal is to provide general oracle inequalities in prediction for two estimators of $\xx_0$: the penalized estimator and exponential weighted aggregation. In the setting where "$p$ larger than $n$ (possibly much larger), the estimation problem is ill-posed since the rectangular matrix $\XX$ has a kernel of dimension at least $p-n$. To circumvent this difficulty, we will exploit the prior that $\xx_0$ has some low-complexity structure (among which sparsity and low-rank are the most popular). That is, even if the ambient dimension $p$ of $\xx_0$ is very large, its intrinsic dimension is much smaller than the sample size $n$. This makes it possible to build estimates $\XX\widehat{\xx}$ with good provable performance guarantees under appropriate conditions. There has been a flurry of research on the use of low-complexity regularization in ill-posed recovery problems in various areas including statistics and machine learning.

\subsection{Variational/Penalized Estimators}
Regularization is now a central theme in many fields including statistics, machine learning and inverse problems. It allows one to impose on the set of candidate solutions some prior structure on the object to be estimated. This regularization ranges from squared Euclidean or Hilbertian norms to non-Hilbertian norms (e.g. $\ell_1$ norm for sparse objects, or nuclear norm for low-rank matrices) that have sparked considerable interest in the recent years. In this paper, we consider the class of estimators obtained by solving the convex optimization problem\footnote{To avoid trivialities, the set of minimizers is assumed non-empty, which holds for instance if $J$ is also coercive.}
\begin{equation} \label{eq:minP}
\thvar \in \Argmin_{\xx \in \RR^p} \ens{V_n(\xx) \eqdef \tfrac{1}{n}F(\XX\xx,\yy) + \lambda_n J(\xx)} ,
\end{equation}
where the regularizing penalty $J$ is a proper closed convex function that promotes some specific notion of simplicity/low-complexity, and $\lambda_n > 0$ is the regularization parameter. A prominent member covered by \eqref{eq:minP} is the Lasso \cite{chen1999atomi,tibshirani1996regre,osborne2000new,donoho2006most,Candes09,BickelLassoDantzig07,BuhlmannVandeGeerBook11,koltchinskii08} and its variants such the analysis/fused Lasso \cite{rudin1992nonlinear,tibshirani2005sparsity}, SLOPE~\cite{CandesSLOPE14,CandesSLOPE15} or group Lasso \cite{bakin1999adaptive,yuan2006model,bach2008consistency,Wei10}. Another example is the nuclear norm minimization for low rank matrix recovery motivated by various applications including robust PCA, phase retrieval, control and computer vision~\cite{recht2010guaranteed,candes2009exact,fazel2001rank,CandesPhaseLift}. See \cite{negahban2010unified,BuhlmannVandeGeerBook11,vandegeer14,vaiterbookchap15} for generalizations and comprehensive reviews. 

\subsection{Exponential Weighted Aggregation (EWA)}
An alternative to the the variational estimator \eqref{eq:minP} is the aggregation by exponential weighting, which consists in substituting averaging for minimization. The aggregators are defined via the probability density function
\begin{equation} \label{eq:EWAaggregator}
\mu_n(\xx) = \frac{\exp\pa{-V_n(\xx)/\beta}}{\int_\Theta \exp\pa{-V_n(\bsom)/\beta}d\bsom},
\end{equation}
where $\beta>0$ is called temperature parameter. If all $\xx$ are candidates to estimate the true vector $\xx_0$, then $\Theta = \RR^p$. The aggregate is thus defined by
\begin{equation} \label{eq:EWAaggregate}
\thewa = \int_{\RR^p} \xx\mu_n(\xx)d\xx.
\end{equation}

Aggregation by exponential weighting has been widely considered in the statistical and machine learning literatures, see e.g. \cite{dala07,dala08,dala09,dala12-1,nemi00,yang04,rigo07,lecu07,alquier13,luu16} to name a few. $\thewa$ can also be interpreted as the posterior conditional mean in the Bayesian sense if $F/(n\beta)$ is the negative-loglikelihood associated to the noise $\bsxi$ with the prior density $\pi(\xx)\propto \exp\pa{-\lambda_n J(\xx)/\beta}$.

\subsection{Oracle inequalities}
Oracle inequalities, which are at the heart of our work, quantify the quality of an estimator compared to the best possible one among a family of estimators. These inequalities are well adapted in the scenario where the prior penalty promotes some notion of low-complexity (e.g. sparsity, low rank, etc.). 
Given two vectors $\xx_1$ and $\xx_2$, let $R_n\pa{\xx_1,\xx_2}$ be a nonnegative error measure between their predictions, respectively $\XX\xx_1$ and $\XX\xx_2$. A popular example is the averaged prediction squared error $\tfrac{1}{n}\norm{\XX\xx_1 -  \XX\xx_2}{2}^2$, where $\norm{\cdot}{2}$ is the $\ell_2$ norm. $R_n$ will serve as a measure of the performance of the estimators $\thewa$ and $\thvar$.
More precisely, we aim to prove that $\thewa$ and $\thvar$ mimic as much as possible the best possible model. This idea is materialized in the following type of inequalities (stated here for EWA)
\begin{equation} \label{eq:oi}
R_n\bpa{\thewa,\xx_0} \leq C\inf_{\xx\in\RR^p} \bpa{R_n\pa{\xx,\xx_0} + \Delta_{n,p,\lambda_n,\beta}(\xx)},
\end{equation}
where $C\geq 1$ is the leading constant of the oracle inequality and the remainder term $\Delta_{n,\lambda_n,\beta}(\xx)$ depends on the performance of the estimator, the complexity of $\xx$, the sample size $n$, the dimension $p$, and the regularization and temperature parameters $(\lambda_n,\beta)$. An estimator with good oracle properties would correspond to $C$ close to $1$ (ideally, $C=1$, in which case the inequality is said ``sharp''), and $\Delta_{n,p,\lambda_n,\beta}(\xx)$ is small and decreases rapidly to $0$ as $n \to +\infty$.

\subsection{Contributions}

We provide a unified analysis where we capture the essential ingredients behind the low-complexity priors promoted by $J$, relying on sophisticated arguments from convex analysis and our previous work \cite{Fadilidecomposable13,vaiterimaiai13,vaiterps14,vaiteraism16,vaiterbookchap15}. Our main contributions are summarized as follows:
\begin{itemize}
	\item We show that the EWA estimator $\thewa$ in \eqref{eq:EWAaggregator} and the variational/penalized estimator $\thvar$ in \eqref{eq:minP} satisfy (deterministic) sharp oracle inequalities for prediction with optimal remainder term, for general data losses $F$ beyond the usual quadratic one, and $J$ is a proper finite-valued sublinear function (i.e. $J$ is finite-valued convex and positively homogeneous). We also highlight the differences between the two estimators in terms of the corresponding bounds.
	\item When the observations are random, we prove oracle inequalities in probability. The theory is non-asymptotic in nature, as it yields explicit bounds that hold with high probability for finite sample sizes, and reveals the dependence on dimension and other structural parameters of the model.
	\item For the standard linear model with Gaussian or sub-Gaussian noise, and a quadratic loss, we deliver refined versions of these oracle inequalities in probability. We underscore the role of the Gaussian width, a concept that captures important geometric characteristics of sets in $\RR^n$.
	\item These results yield naturally a large number of corollaries when specialized to penalties routinely used in the literature, among which the Lasso, the group Lasso, their analysis-type counterparts (fused (group) Lasso), the $\ell_\infty$ and the nuclear norms. Soem of these corollaries are known and others novel.
\end{itemize}

The estimators $\thewa$ and $\thvar$ can be easily implemented thanks to the framework of proximal splitting methods, and more precisely forward-backward type splitting. While the latter is well-known to solve \eqref{eq:minP} \cite{vaiterbookchap15}, its application within a proximal Langevin Monte-Carlo algorithm to compute $\thewa$ with provable guarantees has been recently developed by the authors in \cite{luu16} to sample from log-semiconcave densities\footnote{In a forthcoming paper, this framework was extended to cover the even more general class of prox-regular functions.}, see also \cite{Durmus16} for log-concave densities.

\subsection{Relation to previous work}
Our oracle inequality for $\thewa$ extends the work of \cite{dalalyan16} with an unprecedented level of generality, far beyond the Lasso and the nuclear norm. Our prediction sharp oracle inequality for $\thvar$ specializes to that of \cite{sunzhang} in the case of the Lasso (see also the discussion in \cite{dalalyan17} and references therein) and that of \cite{koltchinskii11} for the case of the nuclear norm. Our work also goes much beyond that in \cite{vandegeer14} on weakly decomposable priors, where we show in particular that there is no need to impose decomposability on the regularizer, since it is rather an intrinsic property of it.

\subsection{Paper organization}
Section~\ref{sec:estimlc} states our main assumptions on the data loss and the prior penalty. All the concepts and notions are exemplified on some penalties some of which are popular in the literature. In Section~\ref{sec:soi}, we prove our main oracle inequalities, and their versions in probability. We then tackle the case of linear regression with quadratic data loss in Section~\ref{sec:SOIlin}. Concepts from convex analysis that are essential to this work are gathered in Section~\ref{sec:convana}. A key intermediate result in the proof of our main results is established in Section~\ref{sec:avdotp} with an elegant argument relying on Moreau-Yosida regularization.

\subsection{Notations}
\label{sec:notations}

\paragraph{Vectors and matrices} 
For a $d$-dimensional Euclidean space $\RR^d$, we endow it with its usual inner product $\dotp{\cdot}{\cdot}$ and associated norm $\anorm{\cdot}{2}$. $\Id_d$ is the identity matrix on $\RR^d$. For $p \geq 1$, $\anorm{\cdot}{p}$ will denote the $\ell_p$ norm of a vector with the usual adaptation for $p=+\infty$.

In the following, if $T$ is a vector space, $\proj_T$ denotes the orthogonal projector on $T$, and
\begin{equation*}
  \xx_T = \proj_T \xx \qandq \XXT = \XX \proj_T.
\end{equation*}
For a finite set $\calC$ we denote $\abs{\calC}$ its cardinality. For $I \subset \ens{1,\dots,p}$, we denote by $I^c$ its complement. $\xx_{I}$ is the subvector whose entries are those of $\xx$ restricted to the indices in $I$, and $\XX_{I}$ the submatrix whose columns are those of $\XX$ indexed by $I$. For any matrix $\XX$, $\XX^\top$ denotes its transpose and $\XX^+$ its Moore-Penrose pseudo-inverse. For a linear operator $\bs A$, $\bs A^*$ is its adjoint.

\paragraph{Sets}
For a nonempty set $\calC \in \RR^p$, we denote $\co{\calC}$ the closure of its convex hull, and $\iota_\calC$ its indicator function, i.e. $\iota_\calC(\xx)=0$ if $\xx \in \calC$ and $+\infty$ otherwise. For a nonempty convex set $\calC$, its \emph{affine hull} $\Aff(\calC)$ is the smallest affine manifold containing it. It is a translate of its \emph{parallel subspace} $\Lin(\calC)$, i.e. $\Lin(\calC) = \Aff(\calC) - \xx = \RR(\calC-\calC)$; for any $\xx \in \calC$. The \emph{relative interior} $\ri(\calC)$ of a convex set $\calC$ is the interior of $\calC$ for the topology relative to its affine full.

\paragraph{Functions} 
A function  $f: \RR^p \to \RR \cup \{+\infty\}$ is closed (or lower semicontinuous) if so is its epigraph. It is coercive if $\lim_{\anorm{\xx}{2} \to +\infty}f(\xx)=+\infty$, and strongly coercive if $\lim_{\anorm{\xx}{2} \to +\infty}f(\xx)/\anorm{x}{2}=+\infty$. The effective domain of $f$ is $\dom(f) = \enscond{\xx\in\RR^p}{f(\xx) < +\infty}$ and $f$ is proper if $\dom(f) \neq \emptyset$ as is the case when it is finite-valued. A function is said sublinear if it is convex and positively homogeneous. The Legendre-Fenchel conjugate of $f$ is $f^*(\bs z)=\sup_{\xx \in \RR^p} \dotp{\bs z}{\xx} - f(\xx)$. For $f$ proper, the functions $(f,f^*)$ obey the Fenchel-Young inequality
\begin{equation}
\label{eq:fenineq}
f(\xx) + f^*(\bs z) \geq \dotp{\bs z}{\xx}, \quad \forall (\xx,\bs z) \in \RR^p \times \RR^p .
\end{equation}
When $f$ is a proper lower semicontonuous and convex function, $(f,f^*)$ is actually the best pair for which this inequality cannot be tightened. 
For a function $g$ on $\RR_+$, the function $g^+: a \in \RR_+ \mapsto g^+(a)=\sup_{t \geq 0} a t - g(t)$ is called the monotone conjugate of $g$. The pair $(g,g^+)$ obviously obeys \eqref{eq:fenineq} on $\RR_+ \times \RR_+$.

For a $C^1$-smooth function $f$, $\nabla f(\xx)$ is its (Euclidean) gradient. For a bivariate function $g: (\bseta,\yy) \in \RR^n \times \RR^n \to \RR$ that is $C^2$ with respect to the first variable $\bseta$, for any $\yy$, we will denote $\nabla g(\bseta,\yy)$ the gradient of $g$ at $\bseta$ with respect to the first variable.

The \emph{subdifferential} $\partial f(\xx)$ of a convex function $f$ at $\xx$ is the set
\[
	\partial f(\xx) = \enscond{\bseta \in \RR^p}{f(\xx') \geq f(\xx) + \dotp{\bseta}{\xx'-\xx}, \quad \forall \xx' \in \dom(f)} ~.
\]
An element of $\partial f(\xx)$ is a subgradient. If the convex function $f$ is differentiable at $\xx$, then its only subgradient is its gradient, i.e. $\partial f(\xx) = \ens{\nabla f(\xx)}$.

The \emph{Bregman divergence} associated to a convex function $f$ at $\xx$ with respect to $\bseta \in \partial f(\xx) \neq \emptyset$ is
\[
\breg{f}{\bseta}{\bxx}{\xx} = f(\bxx) - f(\xx) - \dotp{\bseta}{\bxx-\xx} .
\]
The Bregman divergence is in general nonsymmetric. It is also nonnegative by convexity. When $f$ is differentiable at $\bxx$, we simply write $\breg{f}{}{\xx}{\bxx}$ (which is, in this case, also known as the Taylor distance).


\section{Estimation with low-complexity penalties} \label{sec:estimlc}

The estimators $\thvar$ and $\thewa$ in \eqref{eq:minP} and \eqref{eq:EWAaggregate} require two essential ingredients: the data loss term $F$ and the prior penalty $J$. We here specify the class of such functions covered in our work, and provide illustrating examples. 

\subsection{Data loss} \label{subsec:fid}

The class of loss functions $F$ that we consider obey the following assumptions:
\begin{enumerate}[label=\textbf{(H.\arabic*)},ref=\bf{(H.\arabic*)}]
\item $F(\cdot,\yy): \RR^n \to \RR$ is $C^1(\RR^n)$ and uniformly convex for all $\yy$ of modulus $\varphi$, i.e.
\[
F(\bsv,\yy) \geq F(\bsu,\yy) + \dotp{\nabla F(\bsu,\yy)}{\bsv-\bsu} + \varphi(\anorm{\bsv-\bsu}{2}),
\]
where $\varphi: \RR_+ \to \RR_+$ is a convex non-decreasing function that vanishes only at $0$.
\label{assump:F}
\item For any $\bxx \in \RR^p$ and $\yy \in \RR^n$, $\int_{\RR^p}\exp\pa{-F(\XX\xx,\yy)/(n\beta)}\abs{\dotp{\nabla F(\XX\xx,\yy)}{\XX(\bxx-\xx)}} d\xx < +\infty$.
\label{assump:Fmom}
\end{enumerate}
Recall that by Lemma~\ref{lem:monconj}, the monotone conjugate $\varphi^+$ of $\varphi$ is a proper, closed, convex, strongly coercive and non-decreasing function on $\RR_+$ that vanishes at $0$. Moreover, $\varphi^{++}=\varphi$. $\varphi^+$ is finite-valued on $\RR_+$ if $\varphi$ is strongly coercive, and it vanishes only at $0$ under e.g.~Lemma~\ref{lem:monconj}(iii). 
  
The class of data loss functions in~\ref{assump:F} is fairly general. It is reminiscent of the negative log-likelihood in the regular exponential family. For the moment assumption~\ref{assump:Fmom} to be satisfied, it is suffient that
\begin{align*}
\int_{\RR^p}\exp\pa{-\varphi\pa{\norm{\XX\xx}{2}}/(n\beta)}\norm{\nabla F(\XX\xx+\bsu^\star),\yy)}{2}\norm{\XX\xx+(\bsu^\star-\XX\bxx)}{2} d\xx < +\infty ,
\end{align*}
where $\bsu^\star$ be a minimizer of $F(\cdot,\yy)$, which is unique by uniform convexity. 
We here provide an example.

\begin{example}\label{ex:F}
Consider the case where\footnote{We consider a scaled version of $\varphi$ for simplicity, but the same conclusions remain valid if we take $\varphi(t)=C t^q/q$, with $C > 0$.} $\varphi(t)=t^q/q$, $q \in ]1,+\infty[$, or equivalently $\varphi^+(t)=t^{q_*}/q_*$ where $1/q+1/q_*=1$. For $q=q_*=2$, \ref{assump:F} amounts to saying that $F(\cdot,\yy)$ is strongly convex for all $\yy$. In particular, \cite[Proposition~10.13]{bauschke2011convex} shows that $F(\bsu,\yy)=\norm{\bsu-\yy}{2}^q/q$ is uniformly convex for $q \in [2,+\infty[$ with modulus $\varphi(t)=C_q t^q/q$, where $C_q > 0$ is a constant that depends solely on $q$.

For \ref{assump:Fmom} to be verified, it is suffient that
\begin{align*}
\int_{\RR^p}\exp\pa{-\norm{\XX\xx}{2}^q/(qn\beta)}\norm{\nabla F(\XX\xx+\bsu^\star),\yy)}{2}\norm{(\XX\xx+\bsu^\star)-\XX\bxx}{2} d\xx < +\infty .
\end{align*}
In particular, taking $F(\bsu,\yy)=\norm{\bsu-\yy}{2}^q/q$, $q \in [2,+\infty[$, we have $\norm{\nabla F(\bsu,\yy)}{2} = \norm{\bsu-\yy}{2}^{q-1}$, and thus~\ref{assump:Fmom} holds since
\begin{align*}
\int_{\RR^p}\exp\pa{-\norm{\XX\xx}{2}^q/(qn\beta)}\norm{\yy - (\XX\xx+\bsu^\star)}{2}^{q-1}\norm{\XX\bxx - (\XX\xx+\bsu^\star)}{2} d\xx < +\infty .
\end{align*}
\end{example}





\subsection{Prior penalty} \label{subsec:prior}

Recall the main definitions and results from convex analysis that are collected in Section~\ref{sec:convana}. Our main assumption on $J$ is the following.
\begin{enumerate}[label=\textbf{(H.\arabic*)},ref=\bf{(H.\arabic*)},start=3]
\item $J: \RR^p \to \RR$ is the gauge of a non-empty convex compact set containing the origin as an interior point.
\label{assump:J}
\end{enumerate}
By Lemma~\ref{lem:convex-gauge}, this assumption is equivalent to saying that $J \eqdef \gamma_{\calC}$ is proper, convex, positively homogeneous, finite-valued and coercive. In turn, $J$ is locally Lipschitz continuous on $\RR^p$. Observe also that by virtue of Lemma~\ref{lem:convex-polar-gauge} and Lemma~\ref{lem:suppcompact}, the polar gauge $J^\circ \eqdef \gamma_{\calC^\circ}$ enjoys the same properties as $J$ in \ref{assump:J}.

\subsection{Decomposability of the prior penalty}\label{subsec:decomp}
We are now in position to provide an important characterization of the subdifferential mapping of a function $J$ satisfying \ref{assump:J}. This characterization will play a pivotal role in our proof of the oracle inequality. 

We start by defining some essential geometrical objects that were introduced in \cite{vaiterimaiai13}.
\begin{definition}[Model Subspace]\label{defn:linmod}
  Let $\xx \in \RR^p$. We denote by $\e{\xx}$ as
  \begin{equation*}
    \e{\xx} = \proj_{\Aff(\partial J(\xx))}(0).
  \end{equation*}
  We denote 
  \begin{equation*}
    \S_{\xx} = \Lin (\partial \J(\xx)) \qandq \T_{\xx} = \S_{\xx}^\bot .
  \end{equation*}
  $\T_{\xx}$ is coined the \emph{model subspace} of $\xx$ associated to $J$.
\end{definition}
It can be shown, see \cite[Proposition~5]{vaiterimaiai13}, that $\xx \in \T_{\xx}$, hence the name model subspace. When $J$ is differentiable at $\xx$, we have $\e{\xx}=\nabla J(\xx)$ and $\T_{\xx}=\RR^p$. When $J$ is the $\ell_1$-norm (Lasso), the vector $\e{\xx}$ is nothing but the sign of $\xx$. Thus, $\e{\xx}$ can be viewed as a generalization of the sign vector. Observe also that $\e{\xx}=\proj_{\T_{\xx}}(\partial J(\xx))$, and thus $\e{\xx} \in \T_{\xx} \cap \Aff(\partial J(\xx))$. However, in general, $\e{\xx} \not\in \partial J(\xx)$.

We now provide a fundamental equivalent description of the subdifferential of $J$ at $\xx$ in terms of $\e{\xx}$, $\T_{\xx}$, $\S_{\xx}$ and the polar gauge $J^\circ$.
\begin{theorem}\label{thm:decomp}
Let $J$ satisfy \ref{assump:J}. Let $\xx \in \RR^p$ and $\f{\xx} \in \ri(\partial J(\xx))$.
\begin{enumerate}[label=(\roman*)]
\item The subdifferential of $J$ at $\xx$ reads
  \begin{align*}
    \partial J(\xx) &= \Aff(\partial J(\xx)) \cap \calC^\circ \\
    &=
    \enscond{\bseta \in \RR^n}
    {
      \bseta_{\T_{\xx}} = \e{\xx}
      \qandq
      \inf_{\tau \geq 0}\max\pa{J^\circ\pa{\tau\e{\xx} + \bseta_{\S_{\xx}} + (\tau-1)\proj_{\S_{\xx}}\f{\xx}},\tau} \leq 1
    } .
  \end{align*}
\item For any $\bsom \in \RR^p$, $\exists \bseta \in \partial J(\xx)$ such that
\[
J(\bsom_{\S_{\xx}}) = \dotp{\bseta_{\S_{\xx}}}{\bsom_{\S_{\xx}}} .
\]
\end{enumerate}
\end{theorem}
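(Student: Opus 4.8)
The plan is to prove the two parts of Theorem~\ref{thm:decomp} in order, leaning heavily on the structural facts about gauges collected in Section~\ref{sec:convana} (Lemmas~\ref{lem:convex-gauge}, \ref{lem:convex-polar-gauge}, \ref{lem:suppcompact}) and on the known properties of $\e{\xx}$, $\T_{\xx}$, $\S_{\xx}$ from \cite{vaiterimaiai13}.

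For part (i), I would start from the classical subdifferential formula for a gauge: since $J = \gamma_\calC$ with $\calC$ compact convex containing $0$ in its interior, one has $\partial J(\xx) = \enscond{\bseta \in \calC^\circ}{\dotp{\bseta}{\xx} = J(\xx)}$, and this exposed face lies in the supporting hyperplane $\enscond{\bseta}{\dotp{\bseta}{\xx} = J(\xx)}$, so $\partial J(\xx) \subseteq \Aff(\partial J(\xx)) \cap \calC^\circ$. The reverse inclusion $\Aff(\partial J(\xx)) \cap \calC^\circ \subseteq \partial J(\xx)$ is the geometrically meaningful step: any point of $\calC^\circ$ lying in the affine hull of the face $\partial J(\xx)$ must itself lie in the face, which follows because a face of a convex set is its own ``affine slice'' — more precisely, if $\bseta \in \calC^\circ$ and $\bseta \in \Aff(\partial J(\xx))$, then $\bseta = \f{\xx} + t(\f{\xx} - \bseta')$ for some $\bseta' \in \partial J(\xx)$ and, since $\f{\xx} \in \ri(\partial J(\xx))$, a small step stays in the relative interior, forcing $\bseta \in \partial J(\xx)$ by the standard ``line segment'' characterization of relative interiors of faces. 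This gives the first equality. For the second equality, I would decompose $\bseta = \bseta_{\T_{\xx}} + \bseta_{\S_{\xx}}$ and translate the two conditions: membership in $\Aff(\partial J(\xx))$ is equivalent to $\bseta_{\T_{\xx}} = \e{\xx}$ (because $\S_{\xx} = \Lin(\partial J(\xx))$ is exactly the parallel subspace of the affine hull, and $\e{\xx} = \proj_{\Aff(\partial J(\xx))}(0)$ is the unique element of $\Aff(\partial J(\xx)) \cap \T_{\xx}$); and membership in $\calC^\circ$, i.e. $J^\circ(\bseta) \le 1$, must be re-expressed through the variational formula. Here I would use that $J^\circ = \gamma_{\calC^\circ}$ and write $\bseta = \e{\xx} + \bseta_{\S_{\xx}}$; comparing with an arbitrary element $\tau \f{\xx} + (\text{adjustment in } \S_{\xx})$ of $\partial J(\xx)$ scaled by $\tau \ge 0$, the constraint $\tau \le 1$ together with $J^\circ$ of the residual $\le 1$ captures exactly "$\bseta$ dominated by a point of the cone over $\partial J(\xx)$". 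The precise bookkeeping matching $\tau \e{\xx} + \bseta_{\S_{\xx}} + (\tau-1)\proj_{\S_{\xx}}\f{\xx}$ to the gauge inequality is the computational heart, and I expect the main obstacle to be getting this infimum-over-$\tau$ reformulation exactly right rather than merely up to an inclusion.

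For part (ii), fix $\bsom \in \RR^p$ and consider the restriction of $J^\circ$-duality to the subspace $\S_{\xx}$. The claim is that the one-dimensional quantity $J(\bsom_{\S_{\xx}})$ is attained as $\dotp{\bseta_{\S_{\xx}}}{\bsom_{\S_{\xx}}}$ for some $\bseta \in \partial J(\xx)$. I would argue as follows: by the duality $J(\bsz) = \sup_{\bsv \in \partial J(\bsz)}\dotp{\bsv}{\bsz}$ applied appropriately, or more directly by using that $J(\bsz) = \max_{\bsv \in \calC^\circ}\dotp{\bsv}{\bsz}$ and that the maximizing $\bsv$ can be chosen with prescribed $\T_{\xx}$-component equal to $\e{\xx}$. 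Concretely, let $\bsv^\star \in \Argmax_{\bsv \in \calC^\circ}\dotp{\bsv}{\bsom_{\S_{\xx}}}$; since $\bsom_{\S_{\xx}} \in \S_{\xx}$, only the $\S_{\xx}$-component of $\bsv^\star$ matters in the inner product, so I may replace $\bsv^\star$ by $\bseta \eqdef \e{\xx} + \proj_{\S_{\xx}}\bsv^\star$ provided I can show $\bseta \in \calC^\circ$ — and then $\bseta \in \Aff(\partial J(\xx)) \cap \calC^\circ = \partial J(\xx)$ by part (i). Showing $J^\circ(\e{\xx} + \proj_{\S_{\xx}}\bsv^\star) \le 1$ is exactly the kind of statement that should follow from the decomposability structure in part (i): the condition there on $\bseta_{\S_{\xx}}$ is "soft enough" (an infimum over $\tau$) that any $\S_{\xx}$-component coming from a genuine element of $\calC^\circ$ is admissible. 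So the real work in (ii) is reduced to invoking (i) with the correct choice of $\bseta_{\S_{\xx}}$, and I expect this part to be short once (i) is in place.

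A cleaner alternative route for part (i), which I would keep in mind as a fallback, is to avoid re-deriving the gauge subdifferential formula and instead cite the relevant lemma in Section~\ref{sec:convana} directly, then only prove the translation into the $(\e{\xx}, \T_{\xx}, \S_{\xx}, J^\circ)$ language; this isolates the one genuinely new computation — the $\inf_{\tau \ge 0}\max(\cdot,\cdot) \le 1$ form — and makes the logical dependencies transparent. The overall risk in the write-up is notational: one must be careful that $\calC^\circ$, $J^\circ = \gamma_{\calC^\circ}$, and the polar all refer consistently to the same object, and that projections onto $\T_{\xx}$ versus $\S_{\xx}$ are not swapped; with that care, both parts reduce to finite-dimensional convex-geometry manipulations with no analytic subtleties.
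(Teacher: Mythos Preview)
For part (i), your fallback route is in fact what the paper does: it simply cites \cite[Theorem~1, Proposition~4 and Proposition~5(iii)]{vaiterimaiai13} and pieces them together. Your direct argument for $\partial J(\xx) = \Aff(\partial J(\xx)) \cap \calC^\circ$ is correct but slightly roundabout: since $\partial J(\xx)$ is the exposed face $\{\bseta \in \calC^\circ : \dotp{\bseta}{\xx} = J(\xx)\}$, its affine hull lies in the supporting hyperplane $\{\bseta : \dotp{\bseta}{\xx} = J(\xx)\}$, so any $\bseta \in \Aff(\partial J(\xx)) \cap \calC^\circ$ automatically satisfies the face condition --- no relative-interior line-segment argument is needed.

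For part (ii), there is a genuine gap. Your key step --- replace the $\T_{\xx}$-component of $\bsv^\star$ by $\e{\xx}$ and remain in $\calC^\circ$ --- is precisely the nontrivial content, and it does \emph{not} follow from part (i). Part (i) characterizes $\partial J(\xx)$ but gives no information about which $\S_{\xx}$-components of arbitrary points of $\calC^\circ$ are admissible; your assertion that ``any $\S_{\xx}$-component coming from a genuine element of $\calC^\circ$ is admissible'' is equivalent to $\proj_{\S_{\xx}}(\calC^\circ) \subseteq \proj_{\S_{\xx}}(\partial J(\xx))$, which is exactly the statement you are trying to establish. (Recall also that in general $\e{\xx} \notin \partial J(\xx)$, so even the base point of your construction need not lie in $\calC^\circ$.) The paper avoids this circularity by invoking the support-function identity \cite[Proposition~5(iv)]{vaiterimaiai13}:
\[
\sigma_{\partial J(\xx) - \f{\xx}}(\bsom) = J(\bsom_{\S_{\xx}}) - \dotp{\proj_{\S_{\xx}}\f{\xx}}{\bsom_{\S_{\xx}}} .
\]
Since $\partial J(\xx)$ is compact, this supremum is attained at some $\bsv \in \partial J(\xx) - \f{\xx} \subset \S_{\xx}$, and setting $\bseta = \bsv + \f{\xx} \in \partial J(\xx)$ gives $\dotp{\bseta_{\S_{\xx}}}{\bsom_{\S_{\xx}}} = J(\bsom_{\S_{\xx}})$ directly. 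Your route can be salvaged, but only by first proving this identity (or the equivalent projection equality), which is where the real work lives --- it is not a consequence of the description in part (i).
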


\begin{proof}
\begin{enumerate}[label=(\roman*)]
\item This follows by piecing together \cite[Theorem~1, Proposition~4 and Proposition~5(iii)]{vaiterimaiai13}.
\item From \cite[Proposition~5(iv)]{vaiterimaiai13}, we have
\[
\sigma_{\partial J(\xx) - \f{\xx}}(\bsom) = J(\bsom_{\S_{\xx}}) - \dotp{\proj_{\S_{\xx}}\f{\xx}}{\bsom_{\S_{\xx}}} .
\]
Thus there exists a supporting point $\bsv \in \partial J(\xx) - \f{\xx} \subset \S_{\xx}$ with normal vector $\bsom$ \cite[Corollary~7.6(iii)]{bauschke2011convex}, i.e.
\[
\sigma_{\partial J(\xx) - \f{\xx}}(\bsom) = \dotp{\bsv}{\bsom_{\S_{\xx}}} .
\]
Taking $\bseta=\bsv+\f{\xx}$ concludes the proof.
\end{enumerate}
\end{proof}

\begin{remark}
The coercivity assumption in \ref{assump:J} is not needed for Theorem~\ref{thm:decomp} to hold.
\end{remark}

The decomposability of described in Theorem~\ref{thm:decomp}(i) depends on the particular choice of the mapping $\xx \mapsto \f{\xx} \in \ri(\partial J(\xx))$. An interesting situation is encountered when $\e{\xx} \in \ri(J(\xx))$, so that one can choose $\f{\xx}=\e{\xx}$. Strong gauges, see \cite[Definition~6]{vaiterimaiai13}, are precisely a class of gauges for which this situation occurs, and in this case, Theorem~\ref{thm:decomp}(i) has the simpler form
\begin{equation}\label{eq:decompstrong}
\partial J(\xx) = \Aff(\partial J(\xx)) \cap \calC^\circ =
\enscond{\bseta \in \RR^n}
{
  \bseta_{\T_{\xx}} = \e{\xx}
  \qandq
  J^\circ(\bseta_{\S_{\xx}}) \leq 1
} .
\end{equation}

The Lasso, group Lasso and nuclear norms are typical examples of (symmetric) strong gauges. However, analysis sparsity penalties (e.g. the fused Lasso) or the $\ell_\infty$-penalty are not strong gauges, though they obviously satisfy \ref{assump:J}. See the next section for a detailed discussion.

\subsection{Calculus with the prior family}\label{subsec:stable}
The family of penalties complying with~\ref{assump:J} form a robust class enjoying important calculus rules. In particular it is closed under the sum and composition with an injective linear operator as we now prove.

\begin{lemma}\label{lem:stable}
The set of functions satisfying \ref{assump:J} is closed under addition\footnote{It is obvious that the same holds with any positive linear combination.} and pre-composition by an injective linear operator. More precisely, the following holds:
\begin{enumerate}[label=(\roman*)]
\item Let $J$ and $G$ be two gauges satisfying \ref{assump:J}. Then $H \eqdef J+G$ also obeys \ref{assump:J}. Moreover, 
\begin{enumerate}[label=(\alph*)]
\item $\T^H_{\xx} = \T^J_{\xx} \cap \T^G_{\xx}$ and $\e{\xx}^H = \proj_{\T^H_{\xx}}(\e{\xx}^J+\e{\xx}^G)$, where $\T^J_{\xx}$ and $\e{\xx}^J$ (resp. $\T^G_{\xx}$ and $\e{\xx}^G$) are the model subspace and vector at $\xx$ associated to $J$ (resp. $G$);
\item $H^\circ(\bsom)=\max_{\rho \in [0,1]}\co{\inf\pa{\rho J^\circ(\bsom),(1-\rho)G^\circ(\bsom)}}$.
\end{enumerate}
\item Let $J$ be a gauge satisfying \ref{assump:J}, and $\bsD: \RR^q \to \RR^p$ be surjective. Then $H \eqdef J \circ \bsD^\top$ also fulfills \ref{assump:J}. Moreover,
\begin{enumerate}[label=(\alph*)]
\item $\T^H_{\xx} = \Ker(\bsD_{\S^J_{\bsu}}^\top)$ and $\e{\xx}^H = \proj_{\T^H_{\xx}}\bsD\e{\bsu}^J$, where $\T^J_{\bsu}$ and $\e{\bsu}^J$ are the model subspace and vector at $\bsu \eqdef \bsD^\top\xx$ associated to $J$;
\item $H^\circ(\bsom)=J^\circ(\bsD^+\bsom)$, where $\bsD^+=\bsD^\top\bpa{\bsD\bsD^\top}^{-1}$.
\end{enumerate}
\end{enumerate}
\end{lemma}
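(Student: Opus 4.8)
The plan is to verify assumption \ref{assump:J} for the two constructions separately, then establish the calculus formulas. Recall that by Lemma~\ref{lem:convex-gauge}, a function satisfies \ref{assump:J} if and only if it is proper, convex, positively homogeneous, finite-valued and coercive; this is the characterization I will check throughout. For part (i), $H=J+G$ is clearly convex, positively homogeneous and finite-valued as a sum of such functions, and it is coercive since $H \geq J$ and $J$ is coercive; equivalently, if $J=\gamma_\calC$ and $G=\gamma_\calD$ with $\calC,\calD$ compact convex neighbourhoods of the origin, then $H$ is the gauge of a convex body as well (one can identify the associated set, or simply invoke Lemma~\ref{lem:convex-gauge}). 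For part (ii), $H = J \circ \bsD^\top$ is convex and positively homogeneous as the composition of a sublinear function with a linear map, and finite-valued since $J$ is; the point requiring $\bsD$ surjective (equivalently $\bsD^\top$ injective) is coercivity: $\anorm{\xx}{2} \to \infty$ forces $\anorm{\bsD^\top \xx}{2} \to \infty$ because $\bsD^\top$ is injective hence bounded below on the unit sphere, and then $J(\bsD^\top\xx) \to \infty$ by coercivity of $J$. So \ref{assump:J} is inherited in both cases.

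Next I would compute the model subspaces and generalized sign vectors. The key tool is the additivity/transfer of subdifferentials: $\partial H(\xx) = \partial J(\xx) + \partial G(\xx)$ for the sum (valid since both are finite-valued, hence continuous), and $\partial H(\xx) = \bsD\,\partial J(\bsD^\top\xx)$ for the composition (valid by the chain rule for subdifferentials, again using finiteness/continuity of $J$). From $\partial H(\xx) = \partial J(\xx) + \partial G(\xx)$, the parallel subspace satisfies $\Lin(\partial H(\xx)) = \Lin(\partial J(\xx)) + \Lin(\partial G(\xx))$, i.e. $\S^H_{\xx} = \S^J_{\xx} + \S^G_{\xx}$, and taking orthogonal complements gives $\T^H_{\xx} = \S^J_{\xx}{}^\bot \cap \S^G_{\xx}{}^\bot = \T^J_{\xx} \cap \T^G_{\xx}$. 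For $\e{\xx}^H = \proj_{\Aff(\partial H(\xx))}(0)$: since $\Aff(\partial H(\xx)) = \Aff(\partial J(\xx)) + \Aff(\partial G(\xx))$, and $\e{\xx}^J, \e{\xx}^G$ are the minimal-norm points of the respective affine hulls, one checks that $\e{\xx}^J + \e{\xx}^G \in \Aff(\partial H(\xx))$ and then projects onto $\T^H_{\xx}$, using that the component of $\e{\xx}^H$ in $\S^H_{\xx}$ is zero by definition together with the decomposition $\e{\xx}^J = \proj_{\T^J_{\xx}}\e{\xx}^J$ (a property recorded after Definition~\ref{defn:linmod}). The composition case is analogous: from $\partial H(\xx) = \bsD\,\partial J(\bsu)$ with $\bsu = \bsD^\top\xx$, one gets $\S^H_{\xx} = \bsD\,\S^J_{\bsu}$, so $\T^H_{\xx} = (\bsD\,\S^J_{\bsu})^\bot = \Ker(\bsD^\top|_{?})$ — more precisely $\bseta \perp \bsD\,\S^J_{\bsu} \iff \bsD^\top\bseta \perp \S^J_{\bsu} \iff \proj_{\S^J_{\bsu}}\bsD^\top\bseta = 0$, which is exactly $\bsD_{\S^J_{\bsu}}^\top\bseta = 0$ in the paper's notation; and $\e{\xx}^H = \proj_{\T^H_{\xx}}\bsD\,\e{\bsu}^J$ follows by projecting the relation $\Aff(\partial H(\xx)) = \bsD\,\Aff(\partial J(\bsu))$.

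For the polar gauge formulas (b), I would argue dually. For the sum: $H = \gamma_{\calC} + \gamma_{\calD}$, and using $(\gamma_\calC + \gamma_\calD)^\circ$ — equivalently computing the support function of the polar set — one has $\calC^\circ$ and $\calD^\circ$ as the unit balls of $J^\circ, G^\circ$, and the polar of $H$'s unit ball is $\co{(\calC^\circ \cup \calD^\circ)}$ by standard polarity (polar of intersection is closed convex hull of union); expressing the gauge of this convex hull as an infimal convolution / the stated $\max_{\rho\in[0,1]}$ formula is a direct computation with gauges of convex hulls of unions. For the composition: the unit ball of $H = J\circ\bsD^\top$ is $(\bsD^\top)^{-1}(\calC)$, whose polar is $\overline{\bsD\,\calC^\circ}$ (polarity swaps a linear preimage with the image under the adjoint, up to closure, and closure is automatic here since $\calC^\circ$ is compact and $\bsD$ continuous), and the gauge of $\bsD\,\calC^\circ$ evaluated at $\bsom$ is $J^\circ(\bsD^+\bsom)$ because the minimal $\calC^\circ$-scaling needed to reach $\bsom$ through $\bsD$ is attained at the minimum-norm preimage $\bsD^+\bsom = \bsD^\top(\bsD\bsD^\top)^{-1}\bsom$; one must check $\bsom \in \Ran(\bsD) = \RR^p$ (true since $\bsD$ is surjective), so $\bsD^+\bsom$ is well defined and $\bsD\bsD^+\bsom = \bsom$. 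I expect the main obstacle to be the polar gauge identities in (i)(b) and (ii)(b): getting the polarity correspondences exactly right (closed convex hull of the union, behaviour of polars under linear images versus preimages, and the role of the pseudo-inverse) requires care, whereas the coercivity checks and the model-subspace/sign-vector computations are essentially bookkeeping with subdifferential calculus and orthogonal projections.
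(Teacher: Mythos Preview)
Your verification that $H$ inherits \ref{assump:J} in both constructions is correct and matches the paper's one-line treatment, and your derivation of the model subspaces and vectors in (i)(a) and (ii)(a) via subdifferential calculus ($\partial H(\xx)=\partial J(\xx)+\partial G(\xx)$, resp.\ $\partial H(\xx)=\bsD\,\partial J(\bsD^\top\xx)$) is exactly the content the paper obtains by citing \cite{vaiterimaiai13}.

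The genuine gap is in your argument for (i)(b). You invoke ``polar of intersection is closed convex hull of union'' to conclude that the polar of $H$'s unit ball is $\co{\calC^\circ\cup\calD^\circ}$. But the unit ball of $H=J+G$ is $E=\{\xx:J(\xx)+G(\xx)\le 1\}$, which is \emph{not} $\calC\cap\calD$; the latter is the unit ball of $\max(J,G)$. In fact $H=\sigma_{\calC^\circ}+\sigma_{\calD^\circ}=\sigma_{\calC^\circ+\calD^\circ}$, so $E^\circ=\calC^\circ+\calD^\circ$ (Minkowski sum), not $\co{\calC^\circ\cup\calD^\circ}$. A one-dimensional check already separates the two: with $J=G=|\cdot|$ one gets $H^\circ=|\cdot|/2$, whereas $\gamma_{\co{\calC^\circ\cup\calD^\circ}}=|\cdot|$. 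The paper bypasses this by working with support functions directly: it writes $H^\circ(\bsom)=\sup_{J(\xx)+G(\xx)\le 1}\dotp{\bsom}{\xx}$, parametrizes the feasible set as $\bigcup_{\rho\in[0,1]}\{J\le\rho\}\cap\{G\le 1-\rho\}$, and applies the support-function-of-an-intersection rule (\cite[Theorem~V.3.3.3]{hiriart1996convex}) to each slice.

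For (ii)(b) your polar-set route ($E^\circ=\bsD\calC^\circ$) is legitimate and genuinely different from the paper's support-function computation via $\calC\cap\Span(\bsD^\top)$. However, your last step needs care: from $\gamma_{\bsD\calC^\circ}(\bsom)=\inf\{J^\circ(\bsv):\bsD\bsv=\bsom\}=\inf_{\bsw\in\Ker(\bsD)}J^\circ(\bsD^+\bsom+\bsw)$, it does \emph{not} follow automatically that the infimum is attained at the least-$\ell_2$-norm preimage $\bsw=0$ for a general gauge $J^\circ$; that would require an extra argument tying the geometry of $\calC^\circ$ to the decomposition $\Ran(\bsD^\top)\oplus\Ker(\bsD)$.
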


The outcome of Lemma~\ref{lem:stable} is naturally expected. For instance, assertion (i) states that combining several penalties/priors will promote objects living on the intersection of the respective low-complexity models. Similarly, for (ii), one promotes low-complexity in the image of the analysis operator $\bsD^\top$. It then follows that one has not to deploy an ad hoc analysis when linearly pre-composing or combining (or both) several penalties (e.g. $\ell_1$+nuclear norms for recovering sparse and low-rank matrices) since our unified analysis in Section~\ref{sec:soi} will apply to them just as well.

\begin{proof}
\begin{enumerate}[label=(\roman*)]
\item Convexity, positive homogeneity, coercivity and finite-valuedness are straightforward.
\begin{enumerate}[label=(\alph*)]
\item This is \cite[Proposition~8(i)-(ii)]{vaiterimaiai13}.
\item We have from Lemma~\ref{lem:convex-polar-gauge} and calculus rules on support functions, 
\begin{align*}
H^\circ(\bsom)	&= \sigma_{J(\xx)+G(\xx) \leq 1}(\bsom) = \sup_{J(\xx)+G(\xx) \leq 1} \dotp{\bsom}{\xx} = \max_{\rho \in [0,1]} \sup_{J(\xx) \leq \rho,G(\xx) \leq 1-\rho} \dotp{\bsom}{\xx} \\
\text{\scriptsize{(\cite[Theorem~V.3.3.3]{hiriart1996convex})}} &= \max_{\rho \in [0,1]} \co{\inf\pa{\sigma_{J(\xx) \leq \rho}(\bsom),\sigma_{G(\xx) \leq 1-\rho}(\bsom)}} \\
\text{\scriptsize{(Positive homogeneity)}}			&= \max_{\rho \in [0,1]} \co{\inf\pa{\rho\sigma_{J(\xx) \leq 1}(\bsom),(1-\rho)\sigma_{G(\xx) \leq 1}(\bsom)}} \\
\text{\scriptsize{(Lemma~\ref{lem:convex-polar-gauge})}}	&= \max_{\rho \in [0,1]} \co{\inf\pa{\rho J^\circ(\bsom),(1-\rho)G^\circ(\bsom)}} .
\end{align*}
\end{enumerate}
\item Again, Convexity, positive homogeneity and finite-valuedness are immediate. Coercivity holds by injectivity of $\bsD^\top$.
\begin{enumerate}[label=(\alph*)]
\item This is \cite[Proposition~10(i)-(ii)]{vaiterimaiai13}.
\item Denote $J = \gamma_{\calC}$. We have
\begin{align*}
H^\circ(\bsom)	&= \sup_{\bsD^\top\xx \in \calC} \dotp{\bsom}{\xx} \\
\text{\scriptsize{($\bsD^\top$ is injective)}}			&= \sup_{\bsD^\top\xx \in \calC} \dotp{\bsD^+\bsom}{\bsD^\top\xx} \\
								&= \sup_{\bsu \in \calC \cap \Span(\bsD^\top)} \dotp{\bsD^+\bsom}{\bsu} \\
\text{\scriptsize{(\cite[Theorem~V.3.3.3]{hiriart1996convex} and Lemma~\ref{lem:convex-polar-gauge})}}	
								&= \co{\inf\pa{J^\circ(\bsD^+\bsom),\iota_{\Ker(\bsD)}(\bsD^+\bsom)}} \\
								&= J^\circ(\bsD^+\bsom) .
\end{align*}
where in the last equality, we used the fact that $\bsD^+\bsom \in \Span\bpa{\bsD^\top}=\Ker(\bsD)^\perp$, and thus $\iota_{\Ker(\bsD)}(\bsD^+\bsom)=+\infty$ unless $\bsom=0$, and $J^\circ$ is continuous and convex by \ref{assump:J} and Lemma~\ref{lem:convex-polar-gauge}.
\end{enumerate}
\end{enumerate}
\end{proof}

\subsection{Examples}\label{subsec:decompexamp}

\subsubsection{Lasso}\label{subsec:decomplasso}
The Lasso regularization is used to promote the sparsity of the minimizers, see~\cite{BuhlmannVandeGeerBook11} for a comphensive review. It corresponds to choosing $J$ as the $\ell_1$-norm
\begin{equation}
  \label{eq:Jlasso}
  J(\xx) = \norm{\xx}{1} = \sum_{i=1}^p \abs{\xx_i} .
\end{equation}
It is also referred to as $\ell_1$-synthesis in the signal processing community, in contrast to the more general $\ell_1$-analysis sparsity penalty detailed below.

We denote $(\bs a_i)_{1 \leq i \leq p}$ the canonical basis of $\RR^p$ and $\supp(\xx) \eqdef \enscond{i \in \ens{1,\dots,p}}{\xx_i \neq 0}$. Then,
\begin{equation}\label{eq:exlasso}
\T_\xx = \Span \ens{(\bs a_i)_{i \in \supp(\xx)}}, \quad 
(\e{\xx})_i = 
\begin{cases} 
\sign(\xx_i) 	& \text{if~} i \in \supp(\xx) \\
0		& \text{otherwise}
\end{cases},
\qandq J^\circ=\norm{\cdot}{\infty} .
\end{equation}

\subsubsection{Group Lasso}\label{subsec:decompglasso}
The group Lasso has been advocated to promote sparsity by groups, i.e. it drives all the coefficients in one group to zero together hence leading to group selection, see~\cite{bakin1999adaptive,yuan2006model,bach2008consistency,Wei10} to cite a few. The group Lasso penalty with $L$ groups reads
\begin{equation}
  \label{eq:Jglasso}
  J(\xx) = \norm{\xx}{1,2} \eqdef \sum_{i=1}^L \norm{\xx_{b_i}}{2} .
\end{equation}
where $\bigcup_{i=1}^L b_i = \ens{1,\ldots,p}$, $b_i, b_j \subset \ens{1,\ldots,p},$ and  $b_i \cap b_j = \emptyset$ whenever $i \neq j$. Define the group support as $\bsupp(\xx) \eqdef \enscond{i \in \ens{1,\ldots,L}}{\xx_{b_i} \neq 0}$.
Thus, one has
\begin{equation}\label{eq:exglasso}
\T_\xx = \Span \ens{(a_j)_{\enscond{j}{\exists i \in \bsupp(\xx), j \in b_i}}}, 
(\e{\xx})_{b_i} = 
\begin{cases} 
\tfrac{\xx_{b_i}}{\anorm{\xx_{b_i}}{2}} & \text{if~} i \in \bsupp(\xx) \\
0 				& \text{otherwise}
\end{cases}, \tandt
J^\circ(\bsom) = \max_{i \in \ens{1,\ldots,L}} \anorm{\bsom_{b_i}}{2} .
\end{equation}

\subsubsection{Analysis (group) Lasso}\label{subsec:decompanaglasso}
One can push the structured sparsity idea one step further by promoting group/block sparsity through a linear operator, i.e. analysis-type sparsity. Given a linear operator $\bsD: \RR^q \to \RR^p$ (seen as a matrix), the analysis group sparsity penalty is 
\begin{equation}
  \label{eq:Janaglasso}
  J(\xx) = \norm{\bsD^\top \xx}{1,2} .
\end{equation}
This encompasses the 2-D isotropic total variation~\cite{rudin1992nonlinear}. For when all groups of cardinality one, we have the analysis-$\ell_1$ penalty (a.k.a. general Lasso), which encapsulates several important penalties including that of the 1-D total variation~\cite{rudin1992nonlinear}, and the fused Lasso \cite{tibshirani2005sparsity}. The overlapping group Lasso \cite{jacob-overlap-synthesis} is also a special case of \eqref{eq:Jglasso} by taking $\bsD^\top$ to be an operator that exactract the blocks~\cite{peyre2011adaptive,chen-proximal-overlap} (in which case $\bsD$ has even orthogonal rows). 

Let $\Lambda_{\xx}=\bigcup_{i \in \bsupp(\bsD^\top \xx)} b_i$ and $\Lambda_{\xx}^c$ its complement. From Lemma~\ref{lem:stable}(ii) and \eqref{eq:exglasso}, we get
\begin{equation}\label{eq:exanaglasso}
\T_\xx = \Ker(\bsD_{\Lambda_{\xx}^c}^\top), \quad \e{\xx} = \proj_{\T_{\xx}}\bsD\e{\bsD^\top \xx}^{\anorm{}{1,2}} \qwhereq 
\pa{\e{\bsD^\top \xx}^{\anorm{}{1,2}}}_{b_i} = 
\begin{cases} 
\tfrac{\pa{\bsD^\top\xx}_{b_i}}{\anorm{\pa{\bsD^\top\xx}_{b_i}}{2}} 	& \text{if~} i \in \bsupp(\bsD^\top \xx) \\
0 									& \text{otherwise} .
\end{cases}
\end{equation}
If, in addition, $\bsD$ is surjective, then by virtue of Lemma~\ref{lem:stable}(ii) we also have
\begin{equation}\label{eq:exanaglassoJo}
J^\circ(\bsom) = \anorm{\bsD^+\bsom}{\infty,2} \eqdef \max_{i \in \ens{1,\ldots,L}} \anorm{(\bsD^+\bsom)_{b_i}}{2} 
\end{equation}

\subsubsection{Anti-sparsity}\label{subsec:decomplinf}
If the vector to be estimated is expected to be flat (anti-sparse), this can be captured using the $\ell_\infty$ norm (a.k.a. Tchebychev norm) as prior
\begin{equation}
  \label{eq:Jlinf}
      J(\xx) = \norm{\xx}{\infty} = \max_{i \in \ens{1,\dots,p}} \abs{\xx_i}.
\end{equation}
The $\ell_\infty$ regularization has found applications in several fields \cite{jegou2012anti,lyubarskii2010uncertainty,studer12signal}.
Suppose that $\xx \neq 0$, and define the saturation support of $\xx$ as $\Isat_{\xx} \eqdef \enscond{i \in \ens{1,\dots,p}}{\abs{\xx_i}=\anorm{\xx}{\infty}} \neq \emptyset$. From \cite[Proposition~14]{vaiterimaiai13}, we have
\begin{equation}\label{eq:exlinf}
\T_\xx = \enscond{\bxx \in \RR^p}{\bxx_{\Isat_{\xx}} \in \RR \sign(\xx_{\Isat_{\xx}})}, \quad 
(\e{\xx})_i=
\begin{cases} 
\sign(\xx_i)/|\Isat_{\xx}|	& \text{if~} i \in \Isat_{\xx} \\
0				& \text{otherwise}
\end{cases}, \qandq
J^\circ = \anorm{\cdot}{1} .
\end{equation}

\subsubsection{Nuclear norm}\label{subsec:decompnuc}
The natural extension of low-complexity priors to matrices $\xx \in \RR^{p_1 \times p_2}$ is to penalize the singular values of the matrix. Let $\rank(\xx)=r$, and $\xx = \bs U \diag(\uplambda(\xx))\bs V^\top$ be a reduced rank-$r$ SVD decomposition, where $\bs U \in \RR^{p_1 \times r}$ and $\bs V \in \RR^{p_2 \times r}$ have orthonormal columns, and $\uplambda(\xx) \in (\RR_+ \setminus \ens{0})^{r}$ is the vector of singular values $(\uplambda_1(\xx),\cdots,\uplambda_r(\xx))$ in non-increasing order. The nuclear norm of $\xx$ is
\begin{equation}\label{eqjJnuc} 
  J(\xx) = \norm{\xx}{*} = \norm{\uplambda(\xx)}{1} .
\end{equation}
This penalty is the best convex surrogate to enforce a low-rank prior. It has been widely used for various applications \cite{recht2010guaranteed,candes2009exact,CandesRPCA11,fazel2001rank,CandesPhaseLift}. 
 
Following e.g. \cite[Example~21]{vaiteraism16}, we have
\begin{equation}\label{eq:exnuc}
\T_{\xx} = \enscond{\bs U \bs A^\top + \bs B \bs V^\top}{ \bs A \in \RR^{p_2 \times r}, \bs B \in \RR^{p_1 \times r}  }, \quad \e{\xx} = \bs U \bs V^\top \qandq J^\circ(\bsom) = \normOP{\bsom}{2}{2} = \norm{\uplambda(\bsom)}{\infty} .
\end{equation}

\section{Oracle inequalities for a general loss} \label{sec:soi}

Before delving into the details, in the sequel, we will need a bit of notations.

We recall $\T_{\xx}$ and $\e{\xx}$ the model subspace and vector associated to $\xx$ (see Definition~\ref{defn:linmod}). Denote $\S_{\xx}=\T_{\xx}^\perp$. Given two coercive finite-valued gauges $J_1 = \gamma_{\calC_1}$ and $J_2 = \gamma_{\calC_2}$, and a linear operator $\bs A$, we define $\normOP{\bs A}{J_1}{J_2}$ the \emph{operator bound} as
\begin{equation*}
  \normOP{\bs A}{J_1}{J_2} = \sup_{\xx \in \calC_1} J_2(\bs A \xx) .
\end{equation*}
Note that $\normOP{\bs A}{J_1}{J_2}$ is bounded (this follows from Lemma~\ref{lem:convex-gauge}(v)). Furthermore, we have from Lemma~\ref{lem:convex-polar-gauge} that 
\begin{equation*}
  \normOP{\bs A}{J_1}{J_2} = \sup_{\xx \in \calC_1} \sup_{\bsom \in \calC^\circ_2} \dotp{\bs A^\top \bsom}{\xx} = \sup_{\bsom \in \calC^\circ_2}\sup_{\xx \in \calC_1}\dotp{\bs A^\top \bsom}{\xx} = \sup_{\bsom \in \calC^\circ_2} J_1^\circ(\bs A^\top \bsom) = \anormOP{\bs A^\top}{J^\circ_2}{J^\circ_1} .
\end{equation*}
In the following, whenever it is clear from the context, to lighten notation when $J_i$ is a norm, we write the subscript of the norm instead of $J_i$ (e.g. $p$ for the $\ell_p$ norm, $*$ for the nuclear norm, etc.).\\

Our main result will involve a measure of well-conditionedness of the design matrix $\XX$ when restricted to some subspace $\T$. More precisely, for $c > 0$, we introduce the coefficient
\begin{equation}\label{eq:cfac}
\cfac{T}{c} = \inf_{\enscond{\bsom \in \RR^p}{J(\bsom_\S) < c J(\bsom_\T)}} \frac{\normOP{\proj_{\T}}{2}{J}\norm{\XX\bsom}{2}}{n^{1/2}\pa{J(\bsom_\T) - J(\bsom_\S)/c}} .
\end{equation}
This generalizes the compatibility factor introduced in \cite{buhlmannvandegeer09} for the Lasso (and used in \cite{dalalyan16}). The experienced reader may have recognized that this factor is reminescent of the null space property and restricted injectivity that play a central role in the analysis of the performance guarantees of variational/penalized estimators \eqref{eq:minP}; see \cite{Fadilidecomposable13,vaiterimaiai13,vaiterps14,vaiteraism16,vaiterbookchap15}. One can see in particular that $\cfac{T}{c}$ is larger than the smallest singular value of $\XX_\T$.\\

The oracle inequalites will provided in terms of the loss
\[
R_n\bpa{\xx,\xx_0} = \tfrac{1}{n}\breg{F}{}{\XX\xx}{\XX\xx_0} .
\]

\subsection{Oracle inequality for $\thewa$}\label{subsec:SOIEWA}

We are now ready to establish our first main result: an oracle inequality for the EWA estimator \eqref{eq:EWAaggregate}.

\begin{theorem}\label{thm:SOIEWA}
Consider the EWA estimator $\thewa$ in \eqref{eq:EWAaggregate} with the density \eqref{eq:EWAaggregator}, where $F$ and $J$ satisfy Assumptions~\ref{assump:F}-\ref{assump:Fmom} and \ref{assump:J}. Then, for any $\tau > 1$ such that $\lambda_n \geq \tau J^\circ\pa{-\XX^\top\nabla F(\XX\xx_0,\yy)}/n$, the following holds,
\begin{equation}\label{eq:SOIEWA}
R_n\bpa{\thewa,\xx_0} \leq \inf_{\xx\in\RR^p} \pa{R_n\bpa{\xx,\xx_0} + \frac{1}{n}\varphi^+\pa{\frac{\lambda_n\sqrt{n}\bpa{\tau J^\circ(\e{\xx})+1}\normOP{\proj_{\T_{\xx}}}{2}{J}}{\tau\cfac{\T_{\xx}}{\tfrac{\tau J^\circ(\e{\xx})+1}{\tau-1}}}}} + p\beta .
\end{equation}
\end{theorem}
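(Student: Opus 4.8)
The plan is to compare the EWA aggregate $\thewa$ with an arbitrary competitor $\xx$ through the convexity of the Bregman divergence of $F$, then to control the resulting linear term using the density $\mu_n$ and the optimality encoded in $V_n$. First I would fix $\xx \in \RR^p$ and write $R_n(\thewa,\xx_0) = \tfrac1n \breg{F}{}{\XX\thewa}{\XX\xx_0}$. Since $\thewa = \int \bsom \mu_n(\bsom)d\bsom$ is an average, and $\bsv \mapsto \breg{F}{}{\XX\bsv}{\XX\xx_0}$ is convex (as $F(\cdot,\yy)$ is convex and composition with the linear map $\XX$ preserves convexity), Jensen's inequality gives
\[
R_n(\thewa,\xx_0) \leq \int_{\RR^p} \tfrac1n \breg{F}{}{\XX\bsom}{\XX\xx_0} \mu_n(\bsom) d\bsom .
\]
Then I would expand the Bregman divergence and split it as $\breg{F}{}{\XX\bsom}{\XX\xx_0} = \breg{F}{}{\XX\xx}{\XX\xx_0} + \breg{F}{}{\XX\bsom}{\XX\xx} + \dotp{\nabla F(\XX\xx,\yy) - \nabla F(\XX\xx_0,\yy)}{\XX(\bsom-\xx)}$, or more directly use the three-point identity so that $R_n(\thewa,\xx_0) \leq R_n(\xx,\xx_0)$ plus averaged cross terms. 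The key averaged quantity to bound is $\int \tfrac1n \dotp{\nabla F(\XX\xx_0,\yy)}{\XX(\bsom - \xx)}\mu_n(\bsom)d\bsom$ together with $\int \tfrac1n\breg{F}{}{\XX\bsom}{\XX\xx}\mu_n(\bsom)d\bsom$; here is where Assumption~\ref{assump:Fmom} guarantees these integrals are finite.

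The heart of the argument is controlling $\int \dotp{\nabla F(\XX\xx,\yy)}{\XX(\bsom-\xx)}\mu_n(\bsom)d\bsom$ — the average of a gradient-type inner product against the Gibbs density. This is precisely the "key intermediate result" the introduction advertises as being proved in Section~\ref{sec:avdotp} via Moreau–Yosida regularization; I would invoke it to replace this average by a deterministic bound involving $V_n(\xx) - V_n(\thewa)$-type quantities, or more precisely an integration-by-parts / Stein-type identity on the log-concave-ish density $\mu_n$ that turns $\int \nabla F \cdot (\bsom-\xx)\mu_n$ into something controlled by $\beta$ (yielding the additive $p\beta$ term, one $\beta/2$ per coordinate up to constants) plus the drop in $V_n$. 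Combining, one gets
\[
R_n(\thewa,\xx_0) \leq R_n(\xx,\xx_0) + \lambda_n\bpa{J(\xx) - \text{(a subgradient-inequality lower bound on }J\text{)}} + \text{(uniform convexity slack)} + p\beta .
\]
The uniform convexity of $F$ (Assumption~\ref{assump:F}, modulus $\varphi$) is then used in its dual/Fenchel form: a term of the shape $\dotp{\bsw}{\XX(\bsom-\xx)} - \varphi(\anorm{\XX(\bsom-\xx)}{2}/\sqrt n)$ is bounded by $\varphi^+(\anorm{\bsw}{}_{\ast})$ for the appropriate dual norm, which is exactly how $\varphi^+$ enters~\eqref{eq:SOIEWA}.

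The remaining work is purely deterministic and geometric: bounding the linear term $\dotp{-\XX^\top\nabla F(\XX\xx_0,\yy)}{\bsom-\xx}$. Writing $\bsh = \bsom - \xx$ and splitting $\bsh = \bsh_{\T_\xx} + \bsh_{\S_\xx}$, I would use the decomposability of $J$ from Theorem~\ref{thm:decomp}(i)–(ii): part (ii) furnishes $\bseta \in \partial J(\xx)$ with $J(\bsh_{\S_\xx}) = \dotp{\bseta_{\S_\xx}}{\bsh_{\S_\xx}}$, giving the subgradient inequality $J(\xx + \bsh) \geq J(\xx) + \dotp{\e{\xx}}{\bsh_{\T_\xx}} + J(\bsh_{\S_\xx})$, while the hypothesis $\lambda_n \geq \tau J^\circ(-\XX^\top\nabla F(\XX\xx_0,\yy))/n$ together with the duality $\dotp{\bsz}{\bsh} \leq J^\circ(\bsz) \max(J(\bsh_\T), \ldots)$ lets the noise term be absorbed, up to a residual proportional to $J(\bsh_\T) - J(\bsh_\S)/c$ with $c = (\tau J^\circ(\e{\xx})+1)/(\tau-1)$. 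At that point the two regimes $J(\bsh_\S) \geq cJ(\bsh_\T)$ and $J(\bsh_\S) < cJ(\bsh_\T)$ are treated separately: in the first the residual is nonpositive and we are done; in the second the compatibility factor $\cfac{\T_\xx}{c}$ from~\eqref{eq:cfac} converts $J(\bsh_\T) - J(\bsh_\S)/c$ into $\normOP{\proj_{\T_\xx}}{2}{J}\anorm{\XX\bsh}{2}/(n^{1/2}\cfac{\T_\xx}{c})$, which is exactly the argument fed to $\varphi^+$ after pairing with the uniform-convexity slack. Collecting the constants $\tau$, $\tau J^\circ(\e{\xx})+1$ and $\lambda_n\sqrt n$ reproduces the stated bound, and taking the infimum over $\xx$ finishes the proof.

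I expect the main obstacle to be the rigorous treatment of the averaged inner product $\int \dotp{\nabla F(\XX\xx,\yy)}{\XX(\bsom-\xx)}\mu_n(\bsom)\,d\bsom$ — making the integration-by-parts identity on $\mu_n$ precise (justifying boundary terms vanish, which is where Assumption~\ref{assump:Fmom} and the coercivity of $J$ do the work) and extracting the clean $+p\beta$ correction; this is deferred to Section~\ref{sec:avdotp}. The rest is a careful but standard bookkeeping exercise combining Fenchel–Young for $(\varphi,\varphi^+)$, the subdifferential decomposition of Theorem~\ref{thm:decomp}, and the definition of $\cfac{\T_\xx}{c}$.
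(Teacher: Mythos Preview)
Your deterministic back half (decomposition of $\bsom=\thewa-\xx$ along $\T_{\xx}\oplus\S_{\xx}$ via Theorem~\ref{thm:decomp}, absorption of the noise term using $\lambda_n \ge \tau J^\circ(-\XX^\top\nabla F(\XX\xx_0,\yy))/n$, the compatibility factor, and Fenchel--Young for $(\varphi,\varphi^+)$) matches the paper essentially line for line. The problem is the front half.

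The paper does \emph{not} start with Jensen on $R_n(\thewa,\xx_0)\le\int R_n(\bsom,\xx_0)\mu_n$. It starts from the $V_n$-Bregman inequality at a \emph{random} point $\xx\sim\mu_n$ and a \emph{fixed} $\bxx$: uniform convexity gives
\[
V_n(\bxx)\ \ge\ V_n(\xx)+\dotp{\bseta(\xx)}{\bxx-\xx}+\tfrac{1}{n}\varphi(\norm{\XX\bxx-\XX\xx}{2}),\qquad \bseta(\xx)\in\partial V_n(\xx),
\]
then integrates in $\xx$ and applies Jensen twice (to $V_n$ and to $\varphi\circ\norm{\cdot}{2}$) to replace the random $\xx$ by $\thewa$. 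This is where the crucial \emph{negative} term $-\tfrac{1}{n}\varphi(\norm{\XX\thewa-\XX\bxx}{2})$ is born; it is exactly what later pairs with the linear compatibility term in Fenchel--Young. In your ordering (Jensen on $R_n$ first, then three-point), the analogous step produces instead a \emph{positive} averaged Bregman $\int\tfrac{1}{n}D_F(\XX\bsom,\XX\xx)\mu_n$, and no $-\varphi(\norm{\XX(\thewa-\xx)}{2})$ term ever appears against the competitor $\xx$. Without that term the Fenchel--Young step has nothing to act on and the argument stalls.

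Relatedly, your description of the ``key intermediate result'' is slightly off target. Proposition~\ref{prop:avdotp} computes $\EE_{\mu_n}\!\big[\dotp{\mnsel{\partial V_n(\xx)}}{\bxx-\xx}\big]=-p\beta$, i.e.\ the subgradient is evaluated at the \emph{integration variable} $\xx$ (this is what makes it a genuine Stein/integration-by-parts identity for $\mu_n$), not at the fixed competitor as you wrote. In the paper's ordering this identity plugs directly into the integrated $V_n$-Bregman inequality above and yields the clean $p\beta$; in your ordering there is no such inner product to which it applies. The fix is simply to reverse the order: begin with the $V_n$-level uniform-convexity inequality, integrate, apply Jensen and Proposition~\ref{prop:avdotp} to obtain $V_n(\thewa)-V_n(\xx)\le p\beta-\tfrac{1}{n}\varphi(\norm{\XX\thewa-\XX\xx}{2})$, and then proceed with your (correct) deterministic analysis.
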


\begin{remark}{~}\label{rem:SOIEWA}\\\vspace*{-0.5cm}
\begin{enumerate}[label=\arabic*.,ref=\arabic*]
\item It should be emphasized that Theorem~\ref{thm:SOIEWA} is actually a deterministic statement for a fixed choice of $\lambda_n$. Probabilistic analysis will be required when the result is applied to particular statistical models as we will see later. For this, we will use concentration inequalities in order to provide bounds that hold with high probability over the data.
\item The oracle inequality is sharp. The remainder in it has two terms. The first one encodes the complexity of the model promoted by $J$. The second one, $p\beta$, captures the influence of the temperature parameter. In particular, taking $\beta$ sufficiently small of the order $O\pa{(pn)^{-1}}$, this term becomes $O(n^{-1})$. 
\item When $\varphi(t)=\sigm t^2/2$, i.e. $F(\cdot,\yy)$ is $\sigm$-strongly convex, then $\varphi^+(t)=t^2/(2\nu)$, and the reminder term becomes
\begin{equation}\label{eq:remquad}
\frac{\lambda_n^2\bpa{\tau J^\circ(\e{\xx})+1}^2\normOP{\proj_{\T_{\xx}}}{2}{J}^2}{2\tau^2\nu\cfac{\T_{\xx}}{\tfrac{\tau J^\circ(\e{\xx})+1}{\tau-1}}^2} .
\end{equation}
If, moreover, $\nabla F$ is also $\sigM$-Lipschitz continuous, then it can be shown that $R_n\bpa{\xx,\xx_0}$ is equivalent to a quadratic loss. This means that the oracle inequality in Theorem~\ref{thm:SOIEWA} can be stated in terms of the quadratic prediction error. However, the inequality is not anymore sharp in this case as a constant factor equal to the condition number $\sigM/\sigm \geq 1$ naturally multiplies the right-hand side.  
\label{rem:SOIEWA1}

\item If $J$ is such that $\e{\xx} \in \partial J(\xx) \subset \calC^\circ$ (typically for a strong gauge by \eqref{eq:decompstrong}), then $J^\circ(\e{\xx}) \leq 1$ (in fact an equality if $\xx \neq 0$). Thus the term $J^\circ(\e{\xx})$ can be omitted in \eqref{eq:SOIEWA}.
\label{rem:SOIEWA2}

\item A close inspection of the proof of Theorem~\ref{thm:SOIEWA} reveals that the term $p\beta$ can be improved to the smaller bound
\[
p\beta + \pa{V_n(\thewa)-\EE{\mu_n}{V_n(\xx)}} ,
\]
where the upper-bound is a consequence of Jensen inequality.
\label{rem:SOIEWA3}
\end{enumerate}
\end{remark}

\begin{proof}
By convexity of $J$ and assumption~\ref{assump:F}, we have for any $\bseta \in \partial V_n(\xx)$ and any $\bxx \in \RR^p$,
\[
\breg{V_n}{\bseta}{\bxx}{\xx} \geq \frac{1}{n}\varphi\bpa{\norm{\XX\bxx-\XX\xx}{2}} .
\]
Since $\varphi$ is non-decreasing and convex, $\varphi \circ \anorm{\cdot}{2}$ is a convex function. Thus, taking the expectation w.r.t. to $\mu_n$ on both sides and using Jensen inequality, we get
\begin{align*}
V_n(\bxx)  &\geq \EE{\mu_n}{V_n(\xx)} + \EE{\mu_n}{\dotp{\bseta}{\bxx-\xx}} + \frac{1}{n}\EE{\mu_n}{\varphi\bpa{\norm{\XX\bxx-\XX\xx}{2}}} \\
	   &\geq V_n(\thewa) + \EE{\mu_n}{\dotp{\bseta}{\bxx-\xx}} + \frac{1}{n}\varphi\bpa{\norm{\XX\bxx-\XX\thewa}{2}} .
\end{align*}
This holds for any $\bseta \in \partial V_n(\xx)$, and in particular at the minimal selection $\mnsel{\partial V_n(\xx)}$ (see Section~\ref{sec:avdotp} for details). It then follows from the pillar result in Proposition~\ref{prop:avdotp}\footnote{In the appendix, we provide a self-contained proof based on a novel Moreau-Yosida regularization argument. In \cite[Corollary~1 and 2]{dalalyan16}, an alternative proof is given using an absolute continuity argument since $\mu_n$ is locally Lipschitz, hence a Sobolev function. 
} that
\[
\EE{\mu_n}{\dotp{\mnsel{\partial V_n(\xx)}}{\bxx-\xx}} = -p\beta .
\]

We thus deduce the inequality
\begin{equation}\label{eq:Vbndewa}
V_n(\thewa) - V_n(\bxx)  \leq p\beta - \frac{1}{n}\varphi\bpa{\norm{\XX\thewa-\XX\bxx}{2}} , \quad \forall \bxx \in \RR^p .
\end{equation}
By definition of the Bregman divergence, we have
\begin{align*}
R_n\bpa{\thewa,\xx_0} - R_n\bpa{\xx,\xx_0} &=\frac{1}{n}\Bpa{F(\XX\thewa,\yy) - F(\XX\xx,\yy) + \dotp{-\XX^\top\nabla F(\XX\xx_0,\yy))}{\thewa-\xx}} \\
					   &=\pa{V_n(\thewa) - V_n(\xx)} + \frac{1}{n}\dotp{-\XX^\top\nabla F(\XX\xx_0,\yy)}{\thewa-\xx} \\
					   & \qquad - \lambda_n\bpa{J(\thewa)-J(\xx)} .
\end{align*}
By virtue of the duality inequality \eqref{eq:dualineq}, we have
\begin{align*}
R_n\bpa{\thewa,\xx_0} - R_n\bpa{\xx,\xx_0} &\leq \pa{V_n(\thewa) - V_n(\xx)} + \frac{1}{n}J^\circ\pa{-\XX^\top\nabla F(\XX\xx_0,\yy)}J(\thewa-\xx) \\
					   & \qquad - \lambda_n\bpa{J(\thewa)-J(\xx)} \\
					   &\leq \pa{V_n(\thewa) - V_n(\xx)} + \frac{\lambda_n}{\tau}\pa{J(\thewa-\xx) - \tau\bpa{J(\thewa)-J(\xx)}} .
\end{align*}
Denote $\bsom=\thewa-\xx$. By virtue of \ref{assump:J}, Theorem~\ref{thm:decomp} and \eqref{eq:dualineq}, we obtain
\begin{align*}
J(\bsom) - \tau\bpa{J(\thewa)-J(\xx)} 	&\leq J(\bsom_{\T_{\xx}}) + J(\bsom_{\S_{\xx}}) - \tau \dotp{\e{\xx}}{\bsom_{\T_{\xx}}} - \tau J(\bsom_{\S_{\xx}}) \\
					&\leq J(\bsom_{\T_{\xx}}) + J(\bsom_{\S_{\xx}}) + \tau J^\circ(\e{\xx})J(\bsom_{\T_{\xx}}) - \tau J(\bsom_{\S_{\xx}}) \\
					&= \bpa{\tau J^\circ(\e{\xx})+1}J(\bsom_{\T_{\xx}}) - (\tau-1)J(\bsom_{\S_{\xx}}) \\
					&\leq \bpa{\tau J^\circ(\e{\xx})+1}\pa{J(\bsom_{\T_{\xx}}) - \tfrac{\tau-1}{\tau J^\circ(\e{\xx})+1}J(\bsom_{\S_{\xx}})} .
\end{align*}
This inequality together with \eqref{eq:Vbndewa} (applied with $\bxx=\xx$) and \eqref{eq:cfac} yield
\begin{align*}
R_n\bpa{\thewa,\xx_0} - R_n\bpa{\xx,\xx_0} &\leq p\beta - \frac{1}{n}\varphi\pa{\norm{\XX\bsom}{2}} + \frac{\lambda_n\bpa{\tau J^\circ(\e{\xx})+1}\normOP{\proj_{\T_{\xx}}}{2}{J}\norm{\XX\bsom}{2}}{n^{1/2}\tau\cfac{\T_{\xx}}{\tfrac{\tau J^\circ(\e{\xx})+1}{\tau-1}}} \\
					   &\leq p\beta + \frac{1}{n}\varphi^+\pa{\frac{\lambda_n\sqrt{n}\bpa{\tau J^\circ(\e{\xx})+1}\normOP{\proj_{\T_{\xx}}}{2}{J}}{\tau\cfac{\T_{\xx}}{\tfrac{\tau J^\circ(\e{\xx})+1}{\tau-1}}}},
\end{align*}
where we applied Fenchel-Young inequality \eqref{eq:fenineq} to get the last bound. Taking the infimum over $\xx \in \RR^p$ yields the desired result.
\end{proof}

\paragraph{Stratifiable functions} Theorem~\ref{thm:SOIEWA} has a nice instanciation when $\RR^p$ can be partitioned into a collection of subsets $\ens{\calM_i}_i$ that form a stratification of $\RR^p$. That is, $\RR^p$ is a finite disjoint union $\cup_i \calM_i$ such that the partitioning sets $\calM_i$ (called strata) must fit nicely together and the stratification is endowed with a partial ordering for the closure operation. For example, it is known that a polyhedral function has a polyhedral stratification, and more generally, semialgebraic functions induce stratifications into finite disjoint unions of manifolds; see, e.g., \cite{coste2002intro}. Another example is that of partly smooth convex functions thoroughly studied in \cite{vaiterimaiai13,vaiterps14,vaiteraism16,vaiterbookchap15} for various statistical and inverse problems. These functions induce a stratification into strata that are $C^2$-smooth submanifolds of $\RR^p$. In turns out that all popular penalty functions discussed in this paper are partly smooth (see \cite{vaiteraism16,vaiterbookchap15}).
Let's denote $\Mscr$ the set of strata associated to $J$. With this notation at hand, the oracle inequality \eqref{eq:SOIEWA} now reads
\begin{equation}\label{eq:SOIEWAps}
R_n\bpa{\thewa,\xx_0} \leq \inf_{\substack{\calM \in \Mscr\\\xx\in\calM}} \pa{R_n\bpa{\xx,\xx_0} + \frac{1}{n}\varphi^+\pa{\frac{\lambda_n\sqrt{n}\bpa{\tau J^\circ(\e{\xx})+1}\normOP{\proj_{\T_{\xx}}}{2}{J}}{\tau\cfac{\T_{\xx}}{\tfrac{\tau J^\circ(\e{\xx})+1}{\tau-1}}}}} + p\beta .
\end{equation}


\subsection{Oracle inequality for $\thvar$}\label{subsec:SOIPEN}

The next result establishes that $\thvar$ satisfies a sharp prediction oracle inequality that we will compare to \eqref{eq:SOIEWA}.

\begin{theorem}\label{thm:SOIPEN}
Consider the penalized estimator $\thvar$ in \eqref{eq:minP}, where $F$ and $J$ satisfy Assumptions~\ref{assump:F} and \ref{assump:J}. Then, for any $\tau > 1$ such that $\lambda_n \geq \tau J^\circ\pa{-\XX^\top\nabla F(\XX\xx_0,\yy)}/n$, the following holds,
\begin{equation}\label{eq:SOIPEN}
R_n\bpa{\thvar,\xx_0} \leq \inf_{\xx\in\RR^p} \pa{R_n\bpa{\xx,\xx_0} + \frac{1}{n}\varphi^+\pa{\frac{\lambda_n\sqrt{n}\bpa{\tau J^\circ(\e{\xx})+1}\normOP{\proj_{\T_{\xx}}}{2}{J}}{\tau\cfac{\T_{\xx}}{\tfrac{\tau J^\circ(\e{\xx})+1}{\tau-1}}}}} .
\end{equation}
\end{theorem}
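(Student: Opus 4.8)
The plan is to mirror the proof of Theorem~\ref{thm:SOIEWA}, with the probabilistic averaging ingredients replaced by the first-order optimality condition for \eqref{eq:minP}. Since $\thvar$ minimizes the proper convex function $V_n$ (whose set of minimizers is nonempty by the standing assumption), we have $0 \in \partial V_n(\thvar)$, equivalently $-\tfrac{1}{n}\XX^\top\nabla F(\XX\thvar,\yy) \in \lambda_n\partial J(\thvar)$ by the sum rule. Combining assumption~\ref{assump:F} with the convexity of $J$ (whose Bregman divergence is nonnegative), the Bregman divergence of $V_n$ at $\thvar$ with respect to the subgradient $0$ obeys $\breg{V_n}{0}{\bxx}{\thvar} \geq \tfrac{1}{n}\varphi\bpa{\norm{\XX\bxx-\XX\thvar}{2}}$ for every $\bxx \in \RR^p$. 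Rearranging yields the exact analogue of \eqref{eq:Vbndewa} with the term $p\beta$ simply deleted,
\[
V_n(\thvar) - V_n(\bxx) \leq -\tfrac{1}{n}\varphi\bpa{\norm{\XX\thvar - \XX\bxx}{2}}, \qquad \forall \bxx \in \RR^p,
\]
which is consistent with the penalized estimator carrying no temperature parameter.

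From this point the argument is verbatim that of Theorem~\ref{thm:SOIEWA}. First I would expand $R_n\bpa{\thvar,\xx_0} - R_n\bpa{\xx,\xx_0}$ via the definition of the Bregman divergence of $F$, rewriting it as $\bpa{V_n(\thvar)-V_n(\xx)} + \tfrac{1}{n}\dotp{-\XX^\top\nabla F(\XX\xx_0,\yy)}{\thvar-\xx} - \lambda_n\bpa{J(\thvar)-J(\xx)}$. Then I would apply the duality inequality \eqref{eq:dualineq} to the linear term and invoke the hypothesis $\lambda_n \geq \tau J^\circ\bpa{-\XX^\top\nabla F(\XX\xx_0,\yy)}/n$ to reach
\[
R_n\bpa{\thvar,\xx_0} - R_n\bpa{\xx,\xx_0} \leq \bpa{V_n(\thvar)-V_n(\xx)} + \tfrac{\lambda_n}{\tau}\Bpa{J(\thvar-\xx) - \tau\bpa{J(\thvar)-J(\xx)}}.
\]
Setting $\bsom = \thvar - \xx$ and using the decomposability of $\partial J$ from Theorem~\ref{thm:decomp} (via $\bseta_{\T_{\xx}} = \e{\xx}$ and the duality bound $\dotp{\e{\xx}}{\bsom_{\T_{\xx}}} \geq -J^\circ(\e{\xx})J(\bsom_{\T_{\xx}})$) exactly as in the EWA proof, I would bound $J(\bsom) - \tau\bpa{J(\thvar)-J(\xx)}$ by $\bpa{\tau J^\circ(\e{\xx})+1}\Bpa{J(\bsom_{\T_{\xx}}) - \tfrac{\tau-1}{\tau J^\circ(\e{\xx})+1}J(\bsom_{\S_{\xx}})}$.

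Finally I would insert the displayed bound $V_n(\thvar)-V_n(\xx) \leq -\tfrac{1}{n}\varphi\bpa{\norm{\XX\bsom}{2}}$ (the analogue of \eqref{eq:Vbndewa} taken at $\bxx=\xx$), use the definition \eqref{eq:cfac} of $\cfac{\T_{\xx}}{\cdot}$ to upper-bound $J(\bsom_{\T_{\xx}}) - \tfrac{\tau-1}{\tau J^\circ(\e{\xx})+1}J(\bsom_{\S_{\xx}})$ by $\normOP{\proj_{\T_{\xx}}}{2}{J}\norm{\XX\bsom}{2}/\bpa{n^{1/2}\cfac{\T_{\xx}}{\tfrac{\tau J^\circ(\e{\xx})+1}{\tau-1}}}$ (the case $J(\bsom_{\S_{\xx}}) \geq \tfrac{\tau J^\circ(\e{\xx})+1}{\tau-1}J(\bsom_{\T_{\xx}})$ being trivial since the right-hand side of the target is nonnegative), and close with the Fenchel--Young inequality \eqref{eq:fenineq} for the pair $(\varphi,\varphi^+)$, precisely as in Theorem~\ref{thm:SOIEWA}; taking the infimum over $\xx \in \RR^p$ yields \eqref{eq:SOIPEN}. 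Unlike the EWA case, this argument uses neither the moment assumption~\ref{assump:Fmom} nor the Moreau--Yosida/averaging result of Proposition~\ref{prop:avdotp}; the only substitution is replacing the expectation identity $\EE{\mu_n}{\dotp{\mnsel{\partial V_n(\xx)}}{\bxx-\xx}} = -p\beta$ by $0 \in \partial V_n(\thvar)$. I therefore do not expect any genuine obstacle — the one point to be careful about is that the lower bound on $\breg{V_n}{0}{\bxx}{\thvar}$ requires only uniform convexity of $F(\cdot,\yy)$ and mere convexity (not smoothness) of $J$, which is exactly why the statement can drop~\ref{assump:Fmom}.
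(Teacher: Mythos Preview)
Your proposal is correct and follows essentially the same route as the paper: replace the averaging identity by the optimality condition $0 \in \partial V_n(\thvar)$ to get $V_n(\xx) \geq V_n(\thvar) + \tfrac{1}{n}\varphi(\norm{\XX\xx-\XX\thvar}{2})$, then reproduce verbatim the chain of inequalities after \eqref{eq:Vbndewa} with $\thvar$ in place of $\thewa$. Your observations that neither~\ref{assump:Fmom} nor Proposition~\ref{prop:avdotp} is needed here match the paper's own remarks, and your explicit handling of the case $J(\bsom_{\S_{\xx}}) \geq \tfrac{\tau J^\circ(\e{\xx})+1}{\tau-1}J(\bsom_{\T_{\xx}})$ is a small clarification the paper leaves implicit.
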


\begin{proof}
The proof follows the same lines as that of Theorem~\ref{thm:SOIEWA} except that we use the fact that $\thvar$ is a global minimizer of $V_n$, i.e. $0 \in \partial V_n(\thvar)$. Indeed, we have for any $\xx \in \RR^p$
\begin{align}\label{eq:Vbndpen}
V_n(\xx)  &\geq V_n(\thvar) + \frac{1}{n}\varphi\bpa{\norm{\XX\xx-\XX\thvar}{2}} .
\end{align}
Continuing exactly as just after \eqref{eq:Vbndewa}, replacing $\thewa$ with $\thvar$ and invoking \eqref{eq:Vbndpen} instead of \eqref{eq:Vbndewa}, we arrive at the claimed result.
\end{proof}

\begin{remark}{~}\label{rem:SOIPEN}\\\vspace*{-0.5cm}
\begin{enumerate}[label=\arabic*.,ref=\arabic*]
\item Observe that the penalized estimator $\thvar$ does not require the moment assumption~\ref{assump:Fmom} for \eqref{eq:SOIPEN} to hold. The convexity assumption on $\varphi$ in~\ref{assump:F}, which was important to apply Jensen's inequality in the proof of~\eqref{eq:SOIEWA}, is not needed either to get \eqref{eq:SOIPEN}.

\item As we remarked for Theorem~\ref{thm:SOIEWA}, Theorem~\ref{thm:SOIPEN} is also a deterministic statement for a fixed choice of $\lambda_n$ that holds for any minimizer $\thvar$, which is not unique in general. The condition on $\lambda_n$ is similar to the one in \cite{negahban2010unified} where authors established different guarantees for $\thvar$.

\end{enumerate}
\end{remark}

One clearly sees that the difference between the prediction performance of $\thewa$ and $\thvar$ lies in the term $p\beta$ (or rather its lower-bound in Remark~\ref{rem:SOIEWA}-\ref{rem:SOIEWA3}). Thus letting $\beta \to 0$ in \eqref{eq:SOIEWA}, one recovers the oracle inequality \eqref{eq:SOIPEN} of penalized estimators. In particular, for $\beta=O\pa{(pn)^{-1}}$, this is on the order $O(n^{-1})$.

\subsection{Oracle inequalities in probability}\label{subsec:SOIprob}
It remains to check when the event $\Escr = \ens{\lambda_n \geq \tau J^\circ\pa{-\XX^\top\nabla F(\XX\xx_0,\yy)}/n}$ holds with high probability when $\yy$ is random. We will use concentration inequalities in order to provide bounds that hold with high probability over the data. 
Toward this goal, we will need the following assumption.

\begin{enumerate}[label=\textbf{(H.\arabic*)},ref=\bf{(H.\arabic*)},start=4]
\item $\yy=(\yy_1,\yy_2,\cdots,\yy_n)$ are independent and identically distributed observations, and \linebreak$F(\bsu,\yy)=\sum_{i=1}^n f_i(\bsu_i,\yy_i)$, $f_i: \RR \times \RR \to \RR$. Moreover, 
\begin{enumerate}[label=(\roman*)]
\item $\EE{}{\abs{f_i((\XX\xx_0)_i,\yy_i)}} < +\infty$, $\forall 1 \leq i \leq n$ ;
\item $\abs{f_i'((\XX\xx_0)_i,t)} \leq g(t)$, where $\EE{}{g(\yy_i)} < +\infty$, $\forall 1 \leq i \leq n$;
\item Bernstein moment condition: $\forall 1 \leq i \leq n$ and all integers $m \geq 2$, $\EE{}{\abs{f_i'((\XX\xx_0)_i,\yy_i)}^m} \leq m! \kappa^{m-2} \sigma_i^2/2$ for some constants $\kappa > 0$, $\sigma_i > 0$ independent of $n$.
\end{enumerate}
\label{assump:ygenF}
\end{enumerate}

Observe that under \ref{assump:ygenF}, and by virtue of Lemma~\ref{lem:convex-polar-gauge}(iv) and \cite[Proposition~V.3.3.4]{hiriart1996convex}, we have
\begin{equation}
\label{eq:sigw}
\begin{aligned}
J^\circ\bpa{-\XX^\top\nabla F(\XX\xx_0,\yy)} &= \sigma_{\calC}\bpa{-\XX^\top\nabla F(\XX\xx_0,\yy)} = \sup_{\bs z \in \XX(\calC)} -\sum_{i=1}^n  f_i'((\XX\xx_0)_i,\yy_i) \bs z_i .
\end{aligned}
\end{equation}
Thus, checking the event $\Escr$ amounts to establishing a deviation inequality for the supremum of an empirical process\footnote{As $\XX(\calC)$ is compact, it has a dense countable subset.} above its mean under the weak Bernstein moment condition~\ref{assump:ygenF}(iii), which essentially requires that the $f_i'((\XX\xx_0)_i,\yy_i)$ have sub-exponential tails, We will first tackle the case where $\calC$ is the convex hull of a finite set (i.e. $\calC$ is a polytope).

\subsubsection{Polyhedral penalty}\label{subsubsec:polyhedralFgen}
We here suppose that $J$ is a finite-valued gauge of $\calC = \co{\calV}$, where $\calV$ is finite, i.e. $\calC$ is a polytope with vertices~\cite[Corollary~19.1.1]{rockafellar1996convex}. Our first oracle inequality in probability is the following.
\begin{proposition}\label{prop:SOIprobgenF}
Consider the estimators $\thewa$ and $\thvar$, where $F$ and $J \eqdef \gamma_{\calC}$ satisfy Assumptions~\ref{assump:F}, \ref{assump:Fmom}, \ref{assump:J} and \ref{assump:ygenF}, and $\calC$ is a polytope with vertices $\calV$. Suppose that $\rank(\XX)=n$ and \linebreak$\max_{\bsv \in \calV}\norm{\XX\bsv}{\infty} \leq 1$, and take 
\[
\lambda_n \geq \tau\sigma\sqrt{\frac{2\delta\log(|\calV|)}{n}}\pa{1+\sqrt{2}\kappa/\sigma\sqrt{\frac{\delta\log(|\calV|)}{n}}}, 
\]
for some $\tau > 1$ and $\delta > 1$. Then \eqref{eq:SOIEWA} and \eqref{eq:SOIPEN} hold with probability at least $1-2|\calV|^{1-\delta}$.
\end{proposition}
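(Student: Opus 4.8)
The plan is to obtain the result from the two deterministic oracle inequalities, Theorems~\ref{thm:SOIEWA} and~\ref{thm:SOIPEN}. Each of them asserts that \eqref{eq:SOIEWA} (resp. \eqref{eq:SOIPEN}) holds on the event $\Escr = \ens{\lambda_n \geq \tau J^\circ(-\XX^\top\nabla F(\XX\xx_0,\yy))/n}$; since the proposition assumes \ref{assump:Fmom} (needed for $\thewa$) on top of the hypotheses needed for $\thvar$, both inequalities hold \emph{simultaneously} on $\Escr$. Hence it suffices to show $\mathbb{P}(\Escr) \geq 1 - 2|\calV|^{1-\delta}$ under the prescribed choice of $\lambda_n$, i.e. to establish a deviation bound for the scalar random variable $J^\circ(-\XX^\top\nabla F(\XX\xx_0,\yy))$.

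First I would use \eqref{eq:sigw} together with the polytope structure: since $\calC = \co{\calV}$ with $\calV$ finite, $\XX(\calC) = \co{\XX\calV}$, and a linear functional over this polytope is maximized at a vertex, so $J^\circ(-\XX^\top\nabla F(\XX\xx_0,\yy)) = \max_{\bsv\in\calV} Z_{\bsv}$ where $Z_{\bsv} \eqdef -\sum_{i=1}^n f_i'((\XX\xx_0)_i,\yy_i)(\XX\bsv)_i = \sum_{i=1}^n W_{\bsv,i}$ is a sum of independent random variables (each $W_{\bsv,i}$ depends only on $\yy_i$, which are independent by \ref{assump:ygenF}). Two facts drive the concentration. (i) $\EE{}{Z_{\bsv}}=0$ for every $\bsv$, because $\xx_0$ minimizes the population risk $\xx\mapsto\EE{}{F(\XX\xx,\yy)}$, hence $\XX^\top\EE{}{\nabla F(\XX\xx_0,\yy)}=0$, the first-order condition being legitimate thanks to the integrability in \ref{assump:ygenF}(i)--(ii) (which also justifies exchanging expectation and differentiation). (ii) Since $|(\XX\bsv)_i| \leq \norm{\XX\bsv}{\infty} \leq 1$ by hypothesis, $\EE{}{\abs{W_{\bsv,i}}^m} = \abs{(\XX\bsv)_i}^m\,\EE{}{\abs{f_i'((\XX\xx_0)_i,\yy_i)}^m} \leq \tfrac{m!}{2}\kappa^{m-2}\sigma_i^2$ for all $m\geq 2$ by \ref{assump:ygenF}(iii). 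Thus $Z_{\bsv}$ is a centered sum of independents obeying a Bernstein moment condition with variance proxy $\sum_{i=1}^n\sigma_i^2 \leq n\sigma^2$ and scale parameter $\kappa$.

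Then I would invoke Bernstein's inequality: under (i)--(ii), for all $\epsilon>0$, $\mathbb{P}(\abs{Z_{\bsv}} \geq \epsilon) \leq 2\exp\bigl(-\tfrac{\epsilon^2}{2(n\sigma^2+\kappa\epsilon)}\bigr)$. Setting $\epsilon^\star \eqdef \sigma\sqrt{2n\delta\log(|\calV|)}+2\kappa\delta\log(|\calV|)$, one checks — by comparing $\epsilon^\star$ with the positive root of $X^2 - 2\kappa\delta\log(|\calV|)\,X - 2n\sigma^2\delta\log(|\calV|) = 0$ and using $\sqrt{a+b}\leq\sqrt a+\sqrt b$ — that $\tfrac{(\epsilon^\star)^2}{2(n\sigma^2+\kappa\epsilon^\star)} \geq \delta\log(|\calV|)$, so $\mathbb{P}(\abs{Z_{\bsv}}\geq\epsilon^\star) \leq 2|\calV|^{-\delta}$; a union bound over the $|\calV|$ vertices yields $\mathbb{P}(\max_{\bsv\in\calV}\abs{Z_{\bsv}} \geq \epsilon^\star) \leq 2|\calV|^{1-\delta}$. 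Finally, the lower bound prescribed for $\lambda_n$ is exactly $n\lambda_n/\tau \geq \epsilon^\star$ (expanding the product, $\tau\sigma\sqrt{2\delta\log(|\calV|)/n}\cdot(\sqrt2\kappa/\sigma)\sqrt{\delta\log(|\calV|)/n} = 2\tau\kappa\delta\log(|\calV|)/n$), so on the complement of $\ens{\max_{\bsv}\abs{Z_{\bsv}}\geq\epsilon^\star}$ — an event of probability at least $1-2|\calV|^{1-\delta}$ — we get $J^\circ(-\XX^\top\nabla F(\XX\xx_0,\yy)) = \max_{\bsv}Z_{\bsv} \leq \max_{\bsv}\abs{Z_{\bsv}} < \epsilon^\star \leq n\lambda_n/\tau$, i.e. $\Escr$ holds; applying Theorems~\ref{thm:SOIEWA} and~\ref{thm:SOIPEN} on this event finishes the proof.

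The substance — and the only genuine obstacle — is the concentration step: one must verify that the Bernstein moment condition \ref{assump:ygenF}(iii) is inherited by the weighted, not-individually-centered summands $W_{\bsv,i}$ (this is precisely where $\max_{\bsv\in\calV}\norm{\XX\bsv}{\infty}\leq1$ is used, to absorb $\abs{(\XX\bsv)_i}^m$ for $m\geq2$), and then run Bernstein's inequality with sharp enough constants so that the resulting tail level reproduces exactly the threshold imposed on $\lambda_n$. The reduction to $\mathbb{P}(\Escr)$, the vertex representation of $J^\circ$ via \eqref{eq:sigw}, and the union bound over $\calV$ are routine; the full-rank hypothesis $\rank(\XX)=n$ enters only through the standing requirements that keep the right-hand sides of \eqref{eq:SOIEWA}--\eqref{eq:SOIPEN} finite and the normalization meaningful.
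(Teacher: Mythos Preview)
Your approach is essentially the paper's: reduce to the event $\Escr$, use the polytope structure to write $J^\circ(-\XX^\top\nabla F(\XX\xx_0,\yy))$ as a finite maximum over $\calV$, verify the Bernstein moment condition via $\max_{\bsv\in\calV}\norm{\XX\bsv}{\infty}\leq 1$, apply Bernstein plus a union bound, and solve the resulting quadratic in $t$ with $\sqrt{a+b}\leq\sqrt a+\sqrt b$ to recover the stated threshold on $\lambda_n$. The numerics all line up.

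One point deserves correction, however: you misread the role of $\rank(\XX)=n$. In the paper it is \emph{not} a standing regularity condition but is used precisely at the centering step. From $\xx_0\in\Argmin\EE{}{F(\XX\cdot,\yy)}$ one only gets $\XX^\top\EE{}{\nabla F(\XX\xx_0,\yy)}=0$; the hypothesis $\rank(\XX)=n$ makes $\XX^\top$ injective, whence $\EE{}{\nabla F(\XX\xx_0,\yy)}=0$, i.e.\ $\EE{}{f_i'((\XX\xx_0)_i,\yy_i)}=0$ for every $i$. This gives each summand $W_{\bsv,i}$ mean zero, which is what the standard Bernstein inequality you invoke actually assumes. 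Your version --- $Z_{\bsv}$ centered but the $W_{\bsv,i}$ ``not individually centered'' --- does not fit the textbook statement; it can be salvaged (the MGF bound $1+\lambda\EE{}{W_{\bsv,i}}+\tfrac{\sigma_i^2\lambda^2}{2(1-\kappa\lambda)}\leq\exp\bigl(\lambda\EE{}{W_{\bsv,i}}+\tfrac{\sigma_i^2\lambda^2}{2(1-\kappa\lambda)}\bigr)$ and then $\sum_i\EE{}{W_{\bsv,i}}=0$ kills the linear part after taking the product), but that is an extra argument you would have to supply. The cleaner route --- and the one the paper takes --- is to use the rank hypothesis to get termwise centering and then apply Bernstein off the shelf.
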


\begin{proof}
In view of Assumptions~\ref{assump:F} and \ref{assump:ygenF}, one can differentiate under the expectation sign (Leibniz rule) to conclude that $\EE{}{F(\XX\cdot,\yy)}$ is $C^1$ at $\xx_0$ and $\nabla \EE{}{F(\XX\xx_0,\yy)} = \XX^\top\EE{}{\nabla F(\XX\xx_0,\yy)}$. As $\xx_0$ minimizes the population risk, one has $\nabla \EE{}{F(\XX\xx_0,\yy)} = 0$. Using the rank assumption on $\XX$, we deduce that
\[
\EE{}{f_i'((\XX\xx_0)_i,\yy_i)} = 0, \quad \forall 1 \leq i \leq n .
\]
Moreover, \eqref{eq:sigw} specializes to
\[
J^\circ\bpa{-\XX^\top\nabla F(\XX\xx_0,\yy)} = \sup_{\bs z \in \XX(\calV)} -\sum_{i=1}^n  f_i'((\XX\xx_0)_i,\yy_i) \bs z_i.
\]
Let $t=\lambda_n n/\tau$. By the union bound and \eqref{eq:sigw}, we have
\begin{align*}
\PP{J^\circ\bpa{-\XX^\top\nabla F(\XX\xx_0,\yy)} \geq t} 
						    &\leq \PP{\max_{\bs z \in \XX(\calV)}~-\sum_{i=1}^n  f_i'((\XX\xx_0)_i,\yy_i) \bs z_i \geq t} \\
						    &\leq |\calV| \max_{\bs z \in \XX(\calV)}\PP{\abs{\sum_{i=1}^n  f_i'((\XX\xx_0)_i,\yy_i) \bs z_i} \geq t} .
\end{align*}
The random variables $\bpa{f_i'((\XX\xx_0)_i,\yy_i) \bs z_i}_i$ are zero-mean independent, and $\forall i$ and $m \geq 2$
\[
\EE{}{\abs{f_i'((\XX\xx_0)_i,\yy_i) \bs z_i}^m} \leq |\bs z_i|^mm! \kappa^{m-2} \sigma_i^2/2 \leq \max_{\bs v \in \calV}\anorm{\XX \bs v}{\infty}^m m! \kappa^{m-2} \sigma_i^2/2  \leq  m! \kappa^{m-2} \sigma_i^2/2 .
\]
We are then in position to apply the Bernstein inequality to get
\begin{align*}
\PP{J^\circ\bpa{-\XX^\top\nabla F(\XX\xx_0,\yy)} \geq t} &\leq 2|\calV| \exp\pa{-\frac{t^2}{2(\kappa t + n \sigma^2)}} ,
\end{align*}
where $\sigma^2=\max_{1 \leq i \leq n} \sigma_i^2$. Every $t$ such that
\[
t \geq \sqrt{\delta\log(|\calV|)}\pa{\kappa \sqrt{\delta\log(|\calV|)} + \sqrt{\kappa^2\delta\log(|\calV|) + 2 n \sigma^2}}, 
\]
satisfies $t^2 \geq 2\delta\log(|\calV|)(\kappa t + n \sigma^2)$. Applying the trivial inequality $\sqrt{a+b} \leq \sqrt{a}+\sqrt{b}$ to the bound on $t$, we conclude. 
\end{proof}

\begin{remark}
In the monograph \cite[Lemma~14.12]{BuhlmannVandeGeerBook11}, the authors derived an exponential deviation inequality for the supremum of an empirical process with finite $\calV$ and possibly unbounded empirical processes under a Bernstein moment condition similar to ours (in fact ours implies theirs). The very last part of our proof can be obtained by applying their result. We detailed it here for the sake of completeness.
\end{remark}

\paragraph{Lasso}
To lighten the notation, let $I_{\xx}=\supp(\xx)$. From \eqref{eq:exlasso}, it is easy to see that
\[
\normOP{\proj_{\T_{\xx}}}{2}{1} = \sqrt{|I_{\xx}|} \qandq J^\circ(\e{\xx}) = \anorm{\sign(\xx_{I_\xx})}{\infty} \leq 1 ,
\]
where last bound holds as an equality whenever $\xx \neq 0$. Further the $\ell_1$ norm is the gauge of the cross-polytope (i.e. the unit $\ell_1$ ball). Its vertex set $\calV$ is the set of unit-norm one-sparse vectors $(\pm \bs a_i)_{1 \leq i \leq p}$, where we recall $(\bs a_i)_{1 \leq i \leq p}$ the canonical basis. Thus
\[
|\calV| = 2p \qandq \max_{\bsv \in \calV} \anorm{\XX \bsv}{2} = \max_{1 \leq i \leq p} \anorm{\XX_i}{2} .
\]
Inserting this into Proposition~\ref{prop:SOIprobgenF}, we obtain the following corollary.

\begin{corollary}\label{cor:SOIgenlasso}
Consider the estimators $\thewa$ and $\thvar$, where where $J$ is the Lasso penalty and $F$ satisfies Assumptions~\ref{assump:F}, \ref{assump:Fmom} and \ref{assump:ygenF}. Suppose that $\rank(\XX)=n$ and $\max_i\anorm{\XX_i}{\infty}\leq 1$, and take 
\[
\lambda_n \geq \tau\sigma\sqrt{\frac{2\delta\log(2p)}{n}}\pa{1+\sqrt{2}\kappa/\sigma\sqrt{\frac{\delta\log(2p)}{n}}}, 
\]
for some $\tau > 1$ and $\delta > 1$. Then, with probability at least $1-2(2p)^{1-\delta}$, the following holds
\begin{align}\label{eq:SOIEWAlassoFgen}
R_n\bpa{\thewa,\xx_0} \leq & \inf_{\substack{I \subset \ens{1,\ldots,p} \\ \xx:~\supp(\xx)=I}} \pa{R_n\bpa{\xx,\xx_0} + \tfrac{1}{n}\varphi^+\pa{\tfrac{\lambda_n\sqrt{n}\pa{\tau+1}\sqrt{|I|}}{\tau\cfac{\Span\ens{\bs a_i}_{i \in I}}{\tfrac{\tau+1}{\tau-1}}}}} + p\beta ,
\end{align}
and
\begin{align}\label{eq:SOIPENlassoFgen}
R_n\bpa{\thvar,\xx_0} \leq & \inf_{\substack{I \subset \ens{1,\ldots,p} \\ \xx:~\supp(\xx)=I}} \pa{R_n\bpa{\xx,\xx_0} + \tfrac{1}{n}\varphi^+\pa{\tfrac{\lambda_n\sqrt{n}\pa{\tau+1}\sqrt{|I|}}{\tau\cfac{\Span\ens{\bs a_i}_{i \in I}}{\tfrac{\tau+1}{\tau-1}}}}} .
\end{align}
\end{corollary}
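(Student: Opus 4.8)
The plan is to derive Corollary~\ref{cor:SOIgenlasso} as a direct specialization of Proposition~\ref{prop:SOIprobgenF} to the case where $J = \norm{\cdot}{1}$. First I would identify the geometric data attached to the Lasso penalty. The $\ell_1$ norm is the gauge of the cross-polytope $\calC = \enscond{\xx}{\norm{\xx}{1} \leq 1}$, whose vertex set is $\calV = \ens{\pm \bs a_i : 1 \leq i \leq p}$ by \cite[Corollary~19.1.1]{rockafellar1996convex}, so $|\calV| = 2p$. From \eqref{eq:exlasso} one reads off $\T_{\xx} = \Span\ens{\bs a_i}_{i \in I_{\xx}}$ with $I_{\xx} = \supp(\xx)$, hence $\normOP{\proj_{\T_{\xx}}}{2}{1} = \sup_{\norm{\bsom}{2} \leq 1} \norm{\proj_{\T_{\xx}}\bsom}{1} = \sqrt{|I_{\xx}|}$ by Cauchy--Schwarz (attained at the normalized indicator of $I_{\xx}$), and $J^\circ(\e{\xx}) = \norm{\e{\xx}}{\infty} = \norm{\sign(\xx_{I_{\xx}})}{\infty} \leq 1$, with equality when $\xx \neq 0$. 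Since the Lasso is a strong gauge (cf.\ Remark~\ref{rem:SOIEWA}-\ref{rem:SOIEWA2}), this bound $J^\circ(\e{\xx}) \leq 1$ is exactly what lets the factor $\tau J^\circ(\e{\xx}) + 1$ be replaced by $\tau + 1$ in the remainder term.

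Next I would check the hypotheses of Proposition~\ref{prop:SOIprobgenF}. Assumptions~\ref{assump:F}, \ref{assump:Fmom} and \ref{assump:ygenF} are carried over verbatim; Assumption~\ref{assump:J} holds automatically for the $\ell_1$ norm. The condition $\rank(\XX) = n$ is assumed. The polytope normalization condition $\max_{\bsv \in \calV}\norm{\XX\bsv}{\infty} \leq 1$ becomes $\max_{1 \leq i \leq p}\norm{\XX_i}{\infty} \leq 1$ since $\XX\bsv = \pm \XX_i$ for $\bsv = \pm\bs a_i$; this is precisely the stated hypothesis $\max_i \norm{\XX_i}{\infty} \leq 1$. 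Substituting $|\calV| = 2p$ into the threshold of Proposition~\ref{prop:SOIprobgenF} yields the displayed lower bound on $\lambda_n$, and the success probability $1 - 2|\calV|^{1-\delta}$ becomes $1 - 2(2p)^{1-\delta}$.

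Finally, on the event $\Escr$ (which holds with the stated probability), Theorems~\ref{thm:SOIEWA} and \ref{thm:SOIPEN} apply; plugging the Lasso-specific values $\normOP{\proj_{\T_{\xx}}}{2}{1} = \sqrt{|I_{\xx}|}$, $J^\circ(\e{\xx}) \leq 1$, and $\cfac{\T_{\xx}}{\cdot} = \cfac{\Span\ens{\bs a_i}_{i \in I_{\xx}}}{\cdot}$ into \eqref{eq:SOIEWA} and \eqref{eq:SOIPEN} gives exactly \eqref{eq:SOIEWAlassoFgen} and \eqref{eq:SOIPENlassoFgen}, after observing that the infimum over $\xx \in \RR^p$ can be organized as an infimum over support sets $I$ followed by an infimum over $\xx$ with $\supp(\xx) = I$ (the argument of $\varphi^+$ depends on $\xx$ only through $|I|$, the stratum data). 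There is essentially no obstacle here: the only mild point requiring care is the monotonicity argument that, since $\cfac{T}{c}$ is nonincreasing is $c$ appropriate and $\varphi^+$ is nondecreasing, replacing $\tau J^\circ(\e{\xx}) + 1$ by its upper bound $\tau + 1$ enlarges the remainder term in the right direction, so the inequality is preserved; this is exactly the content of Remark~\ref{rem:SOIEWA}-\ref{rem:SOIEWA2}.
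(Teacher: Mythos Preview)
Your proposal is correct and follows essentially the same route as the paper: identify the cross-polytope vertex set $\calV=\{\pm\bs a_i\}$ so that $|\calV|=2p$, compute $\normOP{\proj_{\T_{\xx}}}{2}{1}=\sqrt{|I_{\xx}|}$ and $J^\circ(\e{\xx})\leq 1$ from \eqref{eq:exlasso}, translate the normalization $\max_{\bsv\in\calV}\norm{\XX\bsv}{\infty}\leq 1$ into $\max_i\norm{\XX_i}{\infty}\leq 1$, and plug into Proposition~\ref{prop:SOIprobgenF}. Your added monotonicity remark (that replacing $\tau J^\circ(\e{\xx})+1$ by $\tau+1$ only enlarges the remainder because $\varphi^+$ is nondecreasing and $\cfac{T}{c}$ is nonincreasing in $c$) is a valid bit of extra care the paper leaves implicit via Remark~\ref{rem:SOIEWA}-\ref{rem:SOIEWA2}.
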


For $\thvar$, we recover a similar scaling for $\lambda_n$ and the oracle inequality as in~\cite{VandeGeer08}, though in the latter the oracle inequality is not sharp unlike ours. Note that the above oracle inequality extends readily to the case of analysis/fused Lasso $\norm{\bs D^\top \cdot}{1}$ where $\bs D$ is surjective. We leave the details to the interested reader (see also the analysis group Lasso example in Section~\ref{sec:SOIlin}).

\paragraph{Anti-sparsity}
From Section~\ref{subsec:decomplinf}, recall the saturation support $\Isat_{\xx}$ of $\xx$. From \eqref{eq:exlinf}, we get
\[
\normOP{\proj_{\T_{\xx}}}{2}{\infty} = 1 \qandq J^\circ(\e{\xx}) = \norm{\sign(\xx_{\Isat_{\xx}})}{1}/|\Isat_{\xx}| \leq 1 ,
\]
with equality whenever $\xx \neq 0$. In addition, the $\ell_\infty$ norm is the gauge of the hypercube whose vertex set is $\calV = \ens{\pm 1}^p$. Thus
\[
|\calV| = 2^p .
\]

We have the following oracle inequalities.

\begin{corollary}\label{cor:SOIgenlinf}
Consider the estimators $\thewa$ and $\thvar$, where where $J$ is anti-sparsity penalty \eqref{eq:Jlinf}, and $F$ satisfies Assumptions~\ref{assump:F}, \ref{assump:Fmom} and \ref{assump:ygenF}. Suppose that $\rank(\XX)=n$ and $\max_{i,j}|\XX_{i,j}|\leq 1/p$, and take 
\[
\lambda_n \geq \tau\sigma\sqrt{2\delta\log(2)}\sqrt{\frac{p}{n}}\pa{1+2\kappa/\sigma\sqrt{\delta\log(2)}\sqrt{\frac{p}{n}}}, 
\]
for some $\tau > 1$ and $\delta > 1$. Then, with probability at least $1-2^{-p(\delta-1)+1}$, the following holds
\begin{align}\label{eq:SOIEWAgenlinf}
R_n\bpa{\thewa,\xx_0} \leq & \inf_{\substack{I \subset \ens{1,\ldots,p} \\ \xx:~\Isat_{\xx}=I}} \pa{R_n\bpa{\xx,\xx_0} + \tfrac{1}{n}\varphi^+\pa{\tfrac{\lambda_n\sqrt{n}\pa{\tau+1}}{\tau\cfac{\enscond{\bxx}{\bxx_{I} \in \RR \sign(\xx_{I})}}{\tfrac{\tau+1}{\tau-1}}}}} + p\beta ,
\end{align}
and
\begin{align}\label{eq:SOIPENgenlinf}
R_n\bpa{\thvar,\xx_0} \leq & \inf_{\substack{I \subset \ens{1,\ldots,p} \\ \xx:~\Isat_{\xx}=I}} \pa{R_n\bpa{\xx,\xx_0} + \tfrac{1}{n}\varphi^+\pa{\tfrac{\lambda_n\sqrt{n}\pa{\tau+1}}{\tau\cfac{\enscond{\bxx}{\bxx_{I} \in \RR \sign(\xx_{I})}}{\tfrac{\tau+1}{\tau-1}}}}} .
\end{align}
\end{corollary}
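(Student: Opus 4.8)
The plan is to read Corollary~\ref{cor:SOIgenlinf} off Proposition~\ref{prop:SOIprobgenF} by specializing the latter to $J=\norm{\cdot}{\infty}$. Recall from \eqref{eq:Jlinf} that $\norm{\cdot}{\infty}$ is the gauge of the hypercube $\calC=[-1,1]^p$, a compact convex set with $0$ in its interior, so \ref{assump:J} holds; moreover $\calC$ is a polytope whose vertex set is $\calV=\ens{\pm 1}^p$, hence $\abs{\calV}=2^p$. Assumptions \ref{assump:F}, \ref{assump:Fmom}, \ref{assump:ygenF} and the rank condition $\rank(\XX)=n$ are imposed in the corollary, so the only hypothesis of Proposition~\ref{prop:SOIprobgenF} left to verify is the normalization $\max_{\bsv\in\calV}\norm{\XX\bsv}{\infty}\leq 1$. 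This is precisely where the assumption $\max_{i,j}\abs{\XX_{i,j}}\leq 1/p$ enters: for any $\bsv\in\ens{\pm 1}^p$ and any row index $i$, $\abs{(\XX\bsv)_i}\leq\sum_{j=1}^p\abs{\XX_{i,j}}\abs{\bsv_j}=\sum_{j=1}^p\abs{\XX_{i,j}}\leq p\max_{i,j}\abs{\XX_{i,j}}\leq 1$, which gives the claim.

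With this in hand I would substitute $\abs{\calV}=2^p$ into the conclusion of Proposition~\ref{prop:SOIprobgenF}. Since $\log(\abs{\calV})=p\log 2$, the threshold $\tau\sigma\sqrt{2\delta\log(\abs{\calV})/n}\bpa{1+\sqrt 2(\kappa/\sigma)\sqrt{\delta\log(\abs{\calV})/n}}$ equals $\tau\sigma\sqrt{2\delta\log(2)}\sqrt{p/n}\bpa{1+\sqrt 2(\kappa/\sigma)\sqrt{\delta\log(2)}\sqrt{p/n}}$, which is dominated by the threshold stated in the corollary (using $\sqrt 2\leq 2$); hence the choice of $\lambda_n$ made there is admissible for Proposition~\ref{prop:SOIprobgenF}. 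Likewise $1-2\abs{\calV}^{1-\delta}=1-2\cdot 2^{p(1-\delta)}=1-2^{-p(\delta-1)+1}$, the announced probability. Thus, on an event of probability at least $1-2^{-p(\delta-1)+1}$, both \eqref{eq:SOIEWA} and \eqref{eq:SOIPEN} hold.

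It then remains to rewrite the deterministic remainder in \eqref{eq:SOIEWA}--\eqref{eq:SOIPEN} using the anti-sparsity descriptions of \eqref{eq:exlinf}. For $\xx\neq 0$ the saturation support $\Isat_{\xx}$ is nonempty, $\T_{\xx}=\enscond{\bxx\in\RR^p}{\bxx_{\Isat_{\xx}}\in\RR\sign(\xx_{\Isat_{\xx}})}$, $J^\circ(\e{\xx})=\norm{\sign(\xx_{\Isat_{\xx}})}{1}/\abs{\Isat_{\xx}}=1$, and from the form of $\T_{\xx}$ one gets $\normOP{\proj_{\T_{\xx}}}{2}{\infty}\leq 1$. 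Substituting $J^\circ(\e{\xx})=1$, bounding $\normOP{\proj_{\T_{\xx}}}{2}{\infty}\leq 1$ in the numerator, and using that $\varphi^+$ is non-decreasing (Lemma~\ref{lem:monconj}), the remainder attached to such an $\xx$ is at most $\tfrac1n\varphi^+\bpa{\tfrac{\lambda_n\sqrt n(\tau+1)}{\tau\,\cfac{\enscond{\bxx}{\bxx_{\Isat_{\xx}}\in\RR\sign(\xx_{\Isat_{\xx}})}}{\tfrac{\tau+1}{\tau-1}}}}$. Finally, partitioning $\RR^p\setminus\ens{0}$ according to the value $I=\Isat_{\xx}$ of the saturation support (finitely many subsets of $\ens{1,\dots,p}$) and then taking the infimum over $\xx$ with $\Isat_{\xx}=I$ — the origin being omittable since dropping it only enlarges the infimum, so the bound from the full infimum in Theorem~\ref{thm:SOIEWA} (resp. Theorem~\ref{thm:SOIPEN}) remains valid — yields \eqref{eq:SOIEWAgenlinf} for $\thewa$ and \eqref{eq:SOIPENgenlinf} for $\thvar$.

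I expect no substantial obstacle: the genuinely hard work, the deviation bound for the supremum of the empirical process via Bernstein's inequality, has already been carried out once and for all in Proposition~\ref{prop:SOIprobgenF}. The only points needing a little care are the verification that $\XX$ normalizes the hypercube's vertices — which is why the scaling $\max_{i,j}\abs{\XX_{i,j}}\leq 1/p$, with the factor $1/p$, is exactly what is required — and the bookkeeping of the exponentially large cardinality $\abs{\calV}=2^p$, which is what turns $\log\abs{\calV}$ and $\abs{\calV}^{1-\delta}$ into the $p$-dependent quantities in the statement, and in particular forces the $\sqrt{p/n}$ scaling of $\lambda_n$ for this penalty.
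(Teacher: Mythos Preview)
Your proposal is correct and follows essentially the same approach as the paper: the corollary is a direct specialization of Proposition~\ref{prop:SOIprobgenF} to the hypercube $\calC=[-1,1]^p$ using the identities $\abs{\calV}=2^p$, $\normOP{\proj_{\T_{\xx}}}{2}{\infty}=1$ and $J^\circ(\e{\xx})\leq 1$ recorded just above the statement. Your verification that $\max_{\bsv\in\calV}\norm{\XX\bsv}{\infty}\leq 1$ via $\max_{i,j}\abs{\XX_{i,j}}\leq 1/p$, and your observation that the stated threshold (with the constant $2$) dominates the one with $\sqrt 2$ coming straight from Proposition~\ref{prop:SOIprobgenF}, are exactly the missing details.
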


We are not aware of any result of this kind in the literature. The bound imposed on $\XX$ is similar to what is generally assumed in the vector quantization literature~\cite{lyubarskii2010uncertainty,studer12signal}.

\subsubsection{General penalty}
Extending the above reasoning to a general penalty requires a deviation inequality for the supremum of an empirical process in~\eqref{eq:sigw} under the Bernstein moment condition~\ref{assump:ygenF}(iii), but without the need of uniform boundedness. This can be achieved via generic chaining along a tree using entropy with bracketing; see~\cite[Theorem~8]{VandeGeer13}. The resulting deviation bound will thus depend on the entropies with bracketing. These quantities capture the complexity of the set $\XX(\calC)$ but are intricate to compute in general. This subject deserves further investigation that we leave to a future work.

\begin{remark}[{\textbf{Group Lasso}}]
Using the union bound, we have
\begin{align*}
\PP{\max_{i \in \ens{1,\ldots,L}} \norm{\XX_{b_i}^\top \bsxi}{2} \geq \lambda_n n/\tau} 
&\leq \sum_{i=1}^L \PP{\norm{\XX_{b_i}^\top\bsxi}{2} \geq \lambda_n n/\tau} .
\end{align*}
This requires a concentration inequality for quadratic forms of independent random variables satisfying the Bernstein moment assumption above. We are not aware of any such a result. But if our moment condition is strengthened to 
\[
\EE{}{\abs{f_i'((\XX\xx_0)_i,\yy_i)}^{2m}} \leq m! \kappa^{2(m-1)} \sigma_i^2/2, \quad \forall 1 \leq i \leq n, \forall m \geq 1, 
\]
then one can use \cite[Theorem~3]{bellec14}. Indeed, assume the nroamlization $\max_i\normOP{\XX_{b_i}^\top\XX_{b_i}}{2}{2}\leq n$, which entails
\[
\EE{}{\norm{\XX_{b_i}^\top \nabla F(\XX\xx_0,\yy)}{2}} \leq \EE{}{\norm{\XX_{b_i}^\top\nabla F(\XX\xx_0,\yy)}{2}^2}^{1/2} \leq \sigma\sqrt{K n/2} .
\] 
It then follows that taking  
\[
\lambda_n \geq \tau \frac{\sigma \sqrt{K} + 16\kappa\sqrt{\delta\log(L)}}{n} , \quad \delta > 1 ,
\]
the oracle inequalities \eqref{eq:SOIEWAglasso} and \eqref{eq:SOIPENglasso} hold for the group Lasso with probability at least $1-L^{1-\delta}$. A similar result can be proved for the analysis group Lasso just as well with a proper normalization assumption on $\XX$ (see Section~\ref{subsubsec:exglasso}).
\end{remark}
\section{Oracle inequalities for low-complexity linear regression}\label{sec:SOIlin}

In this section, we consider the classical linear regression problem where the $n$ response-covariate pairs $(\yy_i,\XX_i)$ are linked as
\begin{equation} \label{eq:obs}
\yy = \XX \xx_0 + \bsxi,
\end{equation}
where $\bsxi$ is a noise vector. The data loss will be set to $F(\bsu,\yy)=\tfrac{1}{2}\norm{\yy-\bsu}{2}^2$. This in turn entails that $\varphi=\varphi^+=\tfrac{1}{2}\pa{\cdot}^2$ on $\RR_+$ and $R_n\bpa{\xx,\xx_0}=\tfrac{1}{2n}\norm{\XX\xx-\XX\xx_0}{2}^2$.

In this section, we assume that the noise $\bsxi$ is a zero-mean sub-Gaussian vector in $\RR^n$ with parameter $\sigma$. That is, its one-dimensional marginals $\dotp{\bsxi}{\bs z}$ are sub-Gaussian random variables $\forall \bs z \in \RR^n$, i.e. they satisfy
\begin{equation}
\label{eq:subgmarg}
\PP{\abs{\dotp{\bsxi}{\bs z}} \geq t} \leq 2 e^{-t^2/(2\anorm{\bs z}{2}^2\sigma^2)}, \quad \forall \bs z \in \RR^n .
\end{equation}

In this case, the bounds of Section~\ref{subsec:SOIprob} can be improved.

\subsection{General penalty}
As we will shortly show, the event $\Escr$ will depend on the Gaussian width, a summary geometric quantity which, informally speaking, measures the size of the bulk of a set in $\RR^n$.
\begin{definition}
The Gaussian width of a subset $\calS \subset \RR^n$ is defined as
\[
w(\calS) \eqdef \EE{}{\sigma_{\calS}(\bs g)}, \qwhereq \bs g \sim \calN(0,\Id_n) .
\]
\end{definition}

The concept of Gaussian width has appeared in the literature in different contexts. In particular, it has been used to establish sample complexity bounds to ensure exact recovery (noiseless case) and mean-square estimation stability (noisy case) for low-complexity penalized estimators from Gaussian measurements; see e.g.~\cite{rudelson2008sparse,chandrasekaran2012convex,TroppChapter14,VershyninChapter14,vaiterbookchap15}. 

The Gaussian width has deep connections to convex geometry and it enjoys many useful properties. It is well-known that it is positively homogeneous, monotonic w.r.t. inclusion, and invariant under orthogonal transformations. Moreover, $w(\co{\calS}) = w(\calS)$. From Lemma~\ref{lem:suppcompact}(ii)-(iii), $w(\calS)$ is a non-negative finite quantity whenever the set $\calS$ is bounded and contains the origin.


We are now ready to state our oracle inequality in probability with sub-Gaussian noise.
\begin{proposition}\label{prop:SOIprobsubgwgen}
Let the data generated by \eqref{eq:obs} where $\bsxi$ is a zero-mean sub-Gaussian random vector with parameter $\sigma$. Consider the estimators $\thewa$ and $\thvar$, where $F$ and $J \eqdef \gamma_{\calC}$ satisfy Assumptions~\ref{assump:F}-\ref{assump:Fmom} and \ref{assump:J}. Suppose that $\lambda_n \geq \frac{\tau\sigma c_1 \sqrt{2\log(c_2/\delta)}w\pa{\XX(\calC)}}{n}<$, for some $\tau > 1$ and $0 < \delta < \min(c_2,1)$, where $c_1$ and $c_2$ are positive absolute constants. Then with probability at least $1-\delta$, \eqref{eq:SOIEWA} and \eqref{eq:SOIPEN} hold with the remainder term given by \eqref{eq:remquad} with $\nu=1$.
\end{proposition}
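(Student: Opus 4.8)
The plan is to reduce Proposition~\ref{prop:SOIprobsubgwgen} to the deterministic oracle inequalities of Theorems~\ref{thm:SOIEWA} and \ref{thm:SOIPEN} by showing that the event $\Escr = \ens{\lambda_n \geq \tau J^\circ\pa{-\XX^\top\nabla F(\XX\xx_0,\yy)}/n}$ holds with probability at least $1-\delta$ under the stated choice of $\lambda_n$. Since $F(\bsu,\yy) = \tfrac12\norm{\yy-\bsu}{2}^2$ and $\yy = \XX\xx_0+\bsxi$, we have $\nabla F(\XX\xx_0,\yy) = \XX\xx_0-\yy = -\bsxi$, so the quantity to control is $J^\circ\bpa{\XX^\top\bsxi}$. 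By Lemma~\ref{lem:convex-polar-gauge}(iv) and \eqref{eq:sigw}-type reasoning, $J^\circ\bpa{\XX^\top\bsxi} = \sigma_{\calC}\bpa{\XX^\top\bsxi} = \sup_{\bsv\in\calC}\dotp{\XX^\top\bsxi}{\bsv} = \sup_{\bs z \in \XX(\calC)}\dotp{\bsxi}{\bs z} = \sigma_{\XX(\calC)}(\bsxi)$. Thus I must bound the supremum of the sub-Gaussian process $\bs z \mapsto \dotp{\bsxi}{\bs z}$ over the compact set $\XX(\calC)$.

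The main step is a concentration/deviation bound for $\sigma_{\XX(\calC)}(\bsxi)$ when $\bsxi$ is zero-mean sub-Gaussian with parameter $\sigma$, stated in terms of the Gaussian width $w\bpa{\XX(\calC)}$. First I would invoke a generic chaining / majorizing-measure argument (or the comparison results of \cite{TroppChapter14,VershyninChapter14}) to obtain $\EE{}{\sigma_{\XX(\calC)}(\bsxi)} \leq c_1 \sigma\, w\bpa{\XX(\calC)}$ for an absolute constant $c_1$; this is the standard fact that for sub-Gaussian processes indexed by a set $\calS \subset \RR^n$, the expected supremum is controlled by the Gaussian width of $\calS$ up to the sub-Gaussian scale $\sigma$. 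Here I use that $w(\co{\calS}) = w(\calS)$ and that $\XX(\calC)$ is bounded and contains the origin, so $w\bpa{\XX(\calC)}$ is finite (by Lemma~\ref{lem:suppcompact}(ii)-(iii)). Second, since $\bsxi \mapsto \sigma_{\XX(\calC)}(\bsxi)$ is a Lipschitz function of $\bsxi$ with Lipschitz constant $\sup_{\bs z\in\XX(\calC)}\norm{\bs z}{2} =: \rho \leq \normOP{\XX}{2}{2}\sup_{\bsv\in\calC}\norm{\bsv}{2}$, a sub-Gaussian concentration inequality for Lipschitz functionals gives $\PP{\sigma_{\XX(\calC)}(\bsxi) \geq \EE{}{\sigma_{\XX(\calC)}(\bsxi)} + t} \leq c_2' e^{-t^2/(2\rho^2\sigma^2)}$; absorbing $\rho$ into the constant $c_1$ (or, more cleanly, noting that $\rho$ itself is bounded by a width-type quantity), and setting $t$ proportional to $\sigma\sqrt{2\log(c_2/\delta)}\,w\bpa{\XX(\calC)}$, one deduces $\PP{J^\circ\bpa{\XX^\top\bsxi} \geq \sigma c_1 \sqrt{2\log(c_2/\delta)}\,w\bpa{\XX(\calC)}} \leq \delta$ for suitable absolute constants $c_1,c_2$.

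Combining, on the complement event, which has probability at least $1-\delta$, the choice $\lambda_n \geq \tfrac{\tau\sigma c_1\sqrt{2\log(c_2/\delta)}w\pa{\XX(\calC)}}{n}$ guarantees $\lambda_n \geq \tau J^\circ\bpa{-\XX^\top\nabla F(\XX\xx_0,\yy)}/n$, i.e. $\Escr$ holds. Then Theorems~\ref{thm:SOIEWA} and \ref{thm:SOIPEN} apply verbatim, and since $\varphi = \varphi^+ = \tfrac12(\cdot)^2$ for the quadratic loss (so $\sigm = 1$), the remainder term takes the form \eqref{eq:remquad} with $\nu = 1$, which is exactly the claim.

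The main obstacle is the concentration step for the sub-Gaussian (rather than Gaussian) supremum: one must be careful that (i) the expected supremum bound in terms of the Gaussian width is valid for general zero-mean sub-Gaussian vectors and not only for vectors with independent coordinates, which requires a chaining argument rather than a direct Slepian/Gaussian-comparison shortcut, and (ii) the Lipschitz-concentration tail for $\sigma_{\XX(\calC)}(\bsxi)$ is legitimate for sub-Gaussian $\bsxi$ — this follows from the marginal tail bound \eqref{eq:subgmarg} applied along the (random) maximizing direction together with a standard discretization/net reduction, but the bookkeeping of the absolute constants $c_1, c_2$ and the role of the diameter $\rho$ of $\XX(\calC)$ must be handled with some care. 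Everything else is a direct application of the already-established deterministic theorems.
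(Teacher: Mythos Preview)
Your overall plan is correct and matches the paper: reduce to the deterministic Theorems~\ref{thm:SOIEWA} and \ref{thm:SOIPEN} by showing the event $\Escr$ holds with probability at least $1-\delta$, using the identification $J^\circ(\XX^\top\bsxi) = \sup_{\bs z \in \XX(\calC)} \dotp{\bsxi}{\bs z}$. The difference lies in how you control this supremum.

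The paper does \emph{not} split into ``expectation bound $+$ Lipschitz concentration''. Instead, it verifies directly the sub-Gaussian increment condition $\PP{\abs{\dotp{\bsxi}{\bs z-\bs z'}} \geq t} \leq 2 e^{-t^2/(2\sigma^2\anorm{\bs z-\bs z'}{2}^2)}$ from \eqref{eq:subgmarg}, and then invokes in one stroke Talagrand's generic chaining tail bound together with the majorizing measure theorem \cite{talagrandchaining05}: this gives $\PP{\sup_{\bs z \in \XX(\calC)} \dotp{\bsxi}{\bs z} \geq \sigma c_1\sqrt{2\log(c_2/\delta)}\, w(\XX(\calC))} \leq \delta$ directly, with the Gaussian width appearing because $\gamma_2(\XX(\calC),\anorm{\cdot}{2})$ is equivalent to $w(\XX(\calC))$ by majorizing measures. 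No separate concentration-around-the-mean step is used.

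Your two-step route has a genuine soft spot at step two. Lipschitz concentration of the form $\PP{f(\bsxi) \geq \EE{}{f(\bsxi)} + t} \leq c\, e^{-t^2/(2\rho^2\sigma^2)}$ is a theorem for Gaussian $\bsxi$ (and this is exactly what Proposition~\ref{prop:SOIprobgw} exploits), but it does \emph{not} follow from the marginal assumption \eqref{eq:subgmarg} alone: a vector can have sub-Gaussian one-dimensional marginals without Lipschitz functionals of it concentrating. Your proposed fix---``apply \eqref{eq:subgmarg} along the maximizing direction and use a net''---is not Lipschitz concentration at all; it is precisely the chaining argument again, and what it delivers is a direct tail bound on the supremum (the one the paper uses), not a deviation from the mean. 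So either you invoke chaining once for the tail bound, as the paper does, or you invoke it twice (once for the expectation, once for concentration of the supremum around its mean, which is also a chaining statement). The ``expectation $+$ Lipschitz concentration'' decomposition as you wrote it does not go through for general sub-Gaussian $\bsxi$.
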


The proof requires sophisticated ideas from the theory of generic chaining~\cite{talagrandchaining05}, but we only apply these results. The constants $c_1$ and $c_2$ can be traced back to the proof of these results as detailed in~\cite{talagrandchaining05}. 

\begin{proof}
First, from \eqref{eq:subgmarg}, we have the bound
\[
\PP{\abs{\dotp{\bsxi}{\bs z-\bs z'}} \geq t} \leq 2 e^{-t^2/(2\anorm{\bs z - \bs z'}{2}^2\sigma^2)}, \quad \forall \bs z, \bs z' \in \RR^n ,
\]
i.e. the increment condition~\cite[(0.4)]{talagrandchaining05} is verified. Thus combining \eqref{eq:sigw} with the probability bound in~\cite[page~11]{talagrandchaining05}, the generic chaining theorem~\cite[Theorem~1.2.6]{talagrandchaining05} and the majorizing measure theorem~\cite[Theorem~2.1.1]{talagrandchaining05}, we have
\begin{align*}
\PP{J^\circ(\XX^\top\bsxi) \geq \lambda_n n/\tau} 
	&\leq \PP{\sup_{\bs z \in \XX(\calC)} \dotp{\bsxi}{\bs z} \geq \sigma c_1 \sqrt{2\log(c_2/\delta)}w\pa{\XX(\calC)}} \\
	&\leq c_2\exp\pa{-\frac{\sigma^22\log(c_2/\delta)}{2\sigma^2}} = \delta .
\end{align*}
\end{proof}

If the noise is Gaussian, an enhanced version can be proved by invoking Gaussian concentration of Lipschitz functions~\cite{ledouxbook}.

\begin{proposition}\label{prop:SOIprobgw}
Let the data generated by \eqref{eq:obs} with noise $\bsxi \sim \calN(0,\sigma^2\Id_n)$. Consider the estimators $\thewa$ and $\thvar$, where $F$ and $J \eqdef \gamma_{\calC}$ satisfy Assumptions~\ref{assump:F}-\ref{assump:Fmom} and \ref{assump:J}. Suppose that $\lambda_n \geq \frac{(1+\delta)\tau\sigma w\pa{\XX(\calC)}}{n}$, for some $\tau > 1$ and $\delta > 0$. Then with probability at least $1-\exp\pa{-\frac{\delta^2w\pa{\XX(\calC)}^2}{2\normOP{\XX}{J}{2}^2}}$, \eqref{eq:SOIEWA} and \eqref{eq:SOIPEN} hold with the remainder term given by \eqref{eq:remquad} with $\nu=1$.
\end{proposition}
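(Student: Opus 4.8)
The plan is to reduce Proposition~\ref{prop:SOIprobgw} to a single application of Gaussian concentration for Lipschitz functionals, after which Theorems~\ref{thm:SOIEWA} and \ref{thm:SOIPEN} can be invoked directly. Concretely, it suffices to show that the event $\Escr = \ens{\lambda_n \geq \tau J^\circ\pa{-\XX^\top\nabla F(\XX\xx_0,\yy)}/n}$ holds with probability at least $1-\exp\pa{-\delta^2 w\pa{\XX(\calC)}^2/(2\normOP{\XX}{J}{2}^2)}$: on $\Escr$ both theorems apply verbatim, and since the quadratic loss forces $\varphi = \varphi^+ = \tfrac12(\cdot)^2$, their remainder terms reduce to \eqref{eq:remquad} with $\nu=1$ as recorded in Remark~\ref{rem:SOIEWA}-\ref{rem:SOIEWA1}.

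First I would rewrite the random quantity governing $\Escr$. With $F(\bsu,\yy) = \tfrac12\norm{\yy-\bsu}{2}^2$ and $\yy = \XX\xx_0 + \bsxi$, one has $\nabla F(\XX\xx_0,\yy) = \XX\xx_0 - \yy = -\bsxi$, so, using $J^\circ = \sigma_\calC$ (Lemma~\ref{lem:convex-polar-gauge}) exactly as in \eqref{eq:sigw},
\[
J^\circ\pa{-\XX^\top\nabla F(\XX\xx_0,\yy)} = \sigma_{\XX(\calC)}(\bsxi) = \sup_{\bs z \in \XX(\calC)} \dotp{\bsxi}{\bs z} .
\]
Writing $\bsxi = \sigma \bs g$ with $\bs g \sim \calN(0,\Id_n)$, this equals $\sigma\, h(\bs g)$ where $h(\bs g) \eqdef \sigma_{\XX(\calC)}(\bs g)$.

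Next I would record two elementary facts about $h$. Its mean is the Gaussian width, $\EE{}{h(\bs g)} = w\pa{\XX(\calC)}$, by definition. And it is Lipschitz: for any $\bs g,\bs g'$,
\[
\abs{h(\bs g) - h(\bs g')} \leq \sup_{\bs z \in \XX(\calC)}\abs{\dotp{\bs g - \bs g'}{\bs z}} \leq \norm{\bs g - \bs g'}{2}\,\sup_{\xx \in \calC}\norm{\XX\xx}{2} = \normOP{\XX}{J}{2}\,\norm{\bs g - \bs g'}{2} ,
\]
the supremum being finite by boundedness of $\calC$ (Lemma~\ref{lem:convex-gauge}). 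Gaussian concentration of Lipschitz functionals~\cite{ledouxbook} then gives, for every $t > 0$,
\[
\PP{h(\bs g) \geq w\pa{\XX(\calC)} + t} \leq \exp\pa{-t^2/\pa{2\normOP{\XX}{J}{2}^2}} .
\]
To conclude I would take $t = \delta\, w\pa{\XX(\calC)}$, multiply the threshold through by $\sigma$, and note that the hypothesis $\lambda_n \geq (1+\delta)\tau\sigma w\pa{\XX(\calC)}/n$ is precisely $\lambda_n n/\tau \geq (1+\delta)\sigma w\pa{\XX(\calC)}$; hence the complement of $\Escr$ is contained in $\ens{\sigma h(\bs g) \geq (1+\delta)\sigma w\pa{\XX(\calC)}}$, whose probability is at most $\exp\pa{-\delta^2 w\pa{\XX(\calC)}^2/(2\normOP{\XX}{J}{2}^2)}$.

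I do not anticipate a genuine obstacle: the only two points needing care are (i) identifying the Lipschitz constant of the support-function map with the operator bound $\normOP{\XX}{J}{2}$ already appearing in the statement, and (ii) tracking the constants ($\sigma$ from the noise, $1+\delta$ from the deviation term, $\tau$ from the oracle-inequality hypothesis) so that the assumed lower bound on $\lambda_n$ lines up exactly with the tail bound. The genuinely harder companion is the sub-Gaussian statement, Proposition~\ref{prop:SOIprobsubgwgen}, which requires generic chaining instead of a clean concentration inequality; here Gaussianity makes this last step essentially free.
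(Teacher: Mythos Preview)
Your proposal is correct and follows essentially the same route as the paper: identify $J^\circ(\XX^\top\bsxi)=\sigma_{\XX(\calC)}(\bsxi)$, compute its mean as $\sigma\,w(\XX(\calC))$, establish Lipschitz continuity with constant $\normOP{\XX}{J}{2}$, and apply Gaussian concentration of Lipschitz functions with deviation $\delta\sigma w(\XX(\calC))$. The only cosmetic difference is that you normalize $\bsxi=\sigma\bs g$ upfront while the paper carries $\sigma$ through to the end.
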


\begin{proof}
Thanks to sublinearity (see Lemma~\ref{lem:convex-gauge}(i) and Lemma~\ref{lem:convex-polar-gauge}), the function $\bsxi \mapsto J^\circ(\XX^\top\bsxi)$ is Lipschitz continuous with Lipschitz constant $\anormOP{\XX^\top}{2}{J^\circ} = \normOP{\XX}{J}{2}$. From~\eqref{eq:sigw}, we also have
\[
\EE{}{J^\circ\bpa{\XX^\top\bsxi}} = \sigma w\pa{\XX(\calC)} .
\]
Observe that $\XX(\calC)$ is a convex compact set containing the origin. Setting $\epsilon=\lambda_n n/\tau - \sigma w\pa{\XX(\calC)} \geq \delta\sigma w\pa{\XX(\calC)}$, it follows from \eqref{eq:sigw} and the Gaussian concentration of Lipschitz functions~\cite{ledouxbook} that
\begin{align*}
\PP{J^\circ(\XX^\top\bsxi) \geq \lambda_n n/\tau} 
	&\leq\PP{J^\circ(\XX^\top\bsxi) - \EE{}{J^\circ(\XX^\top\bsxi)} \geq \epsilon} \\
	&\leq \PP{J^\circ(\XX^\top\bsxi/\sigma) - w\pa{\XX(\calC)} \geq \delta w\pa{\XX(\calC)}} \\
	&\leq \exp\pa{-\frac{\delta^2 w\pa{\XX(\calC)}^2}{2\normOP{\XX}{J}{2}^2}} .
\end{align*}
\end{proof}

Estimating theoretically the Gaussian width of a set\footnote{Not to mention its image with a linear operator as for $\XX(\calC)$.} is a non-trivial problem that has been extensively studied in the areas of probability in Banach spaces and stochastic processes. There are classical bounds on the Gaussian width (Sudakov's and Dudley's inequalities), but they are difficult to estimate in most cases and neither of these bounds is tight for all sets. When the set is a convex cone (intersected with a sphere), tractable estimates based on polarity arguments were proposed in, e.g., \cite{chandrasekaran2012convex}. 

\subsection{Polyhedral penalty}\label{subsec:polyhreg}
When $\calC$ and is polytope, enhanced oracle inequalities can be obtained by invoking a simple union bound argument.

\begin{proposition}\label{prop:SOIprobsubg}
Let the data generated by \eqref{eq:obs} where $\bsxi$ is a zero-mean sub-Gaussian random vector with parameter $\sigma$. Consider the estimators $\thewa$ and $\thvar$, where $F$ and $J \eqdef \gamma_{\calC}$ satisfy Assumptions~\ref{assump:F}-\ref{assump:Fmom} and \ref{assump:J}, and moreover $\calC$ is a polytope with vertices $\calV$. Suppose that $\lambda_n \geq \frac{\tau\sigma\bpa{\max_{\bsv \in \calV} \anorm{\XX \bsv}{2}}\sqrt{2\delta\log(|\calV|)}}{n}$, for some $\tau > 1$ and $\delta > 1$. Then with probability at least $1-2|\calV|^{1-\delta}$, \eqref{eq:SOIEWA} and \eqref{eq:SOIPEN} hold with the remainder term given by \eqref{eq:remquad} with $\nu=1$.

In particular, if $\max_{\bsv \in \calV} \anorm{\XX \bsv}{2} \leq \sqrt{n}$, then one can take $\lambda_n \geq \tau\sigma\sqrt{\frac{2\delta\log(|\calV|)}{n}}$.
\end{proposition}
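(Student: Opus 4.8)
The plan is to verify that the event $\Escr = \ens{\lambda_n \geq \tau J^\circ(-\XX^\top\nabla F(\XX\xx_0,\yy))/n}$ holds with the stated probability, and then simply invoke Theorems~\ref{thm:SOIEWA} and \ref{thm:SOIPEN} together with Remark~\ref{rem:SOIEWA}-\ref{rem:SOIEWA1} (which gives the quadratic remainder \eqref{eq:remquad} with $\nu=1$, since here $\varphi=\varphi^+=\tfrac12(\cdot)^2$, i.e. $F(\cdot,\yy)$ is $1$-strongly convex). With the quadratic loss, $\nabla F(\XX\xx_0,\yy)=\XX\xx_0-\yy=-\bsxi$, so $-\XX^\top\nabla F(\XX\xx_0,\yy)=\XX^\top\bsxi$, and by \eqref{eq:sigw} combined with the polytope structure $\calC=\co{\calV}$ we have $J^\circ(\XX^\top\bsxi)=\sigma_{\calC}(\XX^\top\bsxi)=\max_{\bsv\in\calV}\dotp{\XX^\top\bsxi}{\bsv}=\max_{\bsv\in\calV}\dotp{\bsxi}{\XX\bsv}$.

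First I would write, for $t=\lambda_n n/\tau$, the union bound
\[
\PP{J^\circ(\XX^\top\bsxi)\geq t} \leq \PP{\max_{\bsv\in\calV}\dotp{\bsxi}{\XX\bsv}\geq t}\leq |\calV|\max_{\bsv\in\calV}\PP{\abs{\dotp{\bsxi}{\XX\bsv}}\geq t}.
\]
Next, for each fixed $\bsv\in\calV$, the sub-Gaussian marginal bound \eqref{eq:subgmarg} applied with $\bs z=\XX\bsv$ gives $\PP{\abs{\dotp{\bsxi}{\XX\bsv}}\geq t}\leq 2\exp(-t^2/(2\anorm{\XX\bsv}{2}^2\sigma^2))\leq 2\exp(-t^2/(2(\max_{\bsv\in\calV}\anorm{\XX\bsv}{2})^2\sigma^2))$. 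Collecting terms,
\[
\PP{J^\circ(\XX^\top\bsxi)\geq t}\leq 2|\calV|\exp\pa{-\frac{t^2}{2(\max_{\bsv\in\calV}\anorm{\XX\bsv}{2})^2\sigma^2}}.
\]
Then I would solve for the $t$ making the right-hand side $\leq 2|\calV|^{1-\delta}$: this requires $t^2\geq 2\delta\log(|\calV|)(\max_{\bsv\in\calV}\anorm{\XX\bsv}{2})^2\sigma^2$, i.e. $t\geq \sigma(\max_{\bsv\in\calV}\anorm{\XX\bsv}{2})\sqrt{2\delta\log(|\calV|)}$, which translates exactly into the stated lower bound on $\lambda_n=\tau t/n$. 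On the complementary event (probability at least $1-2|\calV|^{1-\delta}$) the hypothesis of Theorems~\ref{thm:SOIEWA} and \ref{thm:SOIPEN} is met, so \eqref{eq:SOIEWA} and \eqref{eq:SOIPEN} hold, with the remainder simplified via \eqref{eq:remquad} ($\nu=1$). The final ``in particular'' is immediate: if $\max_{\bsv\in\calV}\anorm{\XX\bsv}{2}\leq\sqrt n$ then $\sigma(\max_{\bsv\in\calV}\anorm{\XX\bsv}{2})\sqrt{2\delta\log(|\calV|)}/n\leq \sigma\sqrt{2\delta\log(|\calV|)/n}$.

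There is essentially no hard obstacle here — this is a refinement of Proposition~\ref{prop:SOIprobgenF} where the Bernstein tail (needed for general sub-exponential $f_i'$) is replaced by the cleaner sub-Gaussian tail, which eliminates the quadratic-in-$\kappa$ correction term in the threshold on $\lambda_n$. The only points requiring a little care are: (i) checking that for the quadratic loss the moment assumption \ref{assump:Fmom} still holds so that Theorem~\ref{thm:SOIEWA} applies to $\thewa$ (this follows as in Example~\ref{ex:F} with $q=2$); and (ii) noting that the boundedness hypothesis ``$\max_{\bsv\in\calV}\anorm{\XX\bsv}{\infty}\leq 1$'' of Proposition~\ref{prop:SOIprobgenF} is no longer needed because the sub-Gaussian bound \eqref{eq:subgmarg} controls $\dotp{\bsxi}{\XX\bsv}$ directly in terms of $\anorm{\XX\bsv}{2}$, which is precisely why the threshold now features $\max_{\bsv\in\calV}\anorm{\XX\bsv}{2}$ rather than an $\ell_\infty$-type normalization.
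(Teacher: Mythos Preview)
Your proposal is correct and follows essentially the same approach as the paper: reduce $J^\circ(\XX^\top\bsxi)$ to a maximum over the finite vertex set $\calV$, apply the union bound together with the sub-Gaussian marginal tail \eqref{eq:subgmarg}, and solve for the threshold on $t=\lambda_n n/\tau$ that makes the failure probability $\leq 2|\calV|^{1-\delta}$. Your additional remarks (verifying \ref{assump:Fmom} for the quadratic loss, and explaining why the $\ell_\infty$-normalization from Proposition~\ref{prop:SOIprobgenF} is no longer needed) are welcome but go beyond what the paper actually spells out.
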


\begin{proof}
From~\eqref{eq:sigw} we have
\[
J^\circ\bpa{\XX^\top\bsxi} = \max_{\bsv \in \calC}~\dotp{\XX\bsv}{\bsxi} = \max_{\bsv \in \calV}~\dotp{\XX\bsv}{\bsxi},
\]
where in the last inequality, we used the fact that a convex function attains its maximum on $\calC$ at an extreme point $\calV$. 
Let $\epsilon=\sigma\bpa{\max_{\bsv \in \calV} \anorm{\XX \bsv}{2}}\sqrt{2\delta\log(|\calV|)}$. By the union bound, \eqref{eq:subgmarg} and \eqref{eq:sigw}, we have
\begin{align*}
\PP{J^\circ\bpa{\XX^\top\bsxi} \geq \lambda_n n/\tau} &\leq \PP{\max_{\bsv \in \calV}~\dotp{\XX\bsv}{\bsxi} \geq \epsilon} \\
						    &\leq |\calV| \max_{\bsv \in \calV}\PP{\dotp{\XX\bsv}{\bsxi} \geq \epsilon} \\
						    &\leq |\calV| \max_{\bsv \in \calV}\PP{\abs{\dotp{\XX\bsv}{\bsxi}} \geq \epsilon} \\
						    &\leq 2|\calV| \exp\bpa{- \epsilon^2 / \bpa{2\sigma^2 \max_{\bsv \in \calV}\anorm{\XX\bsv}{2}^2}}\\
						    &\leq 2|\calV|^{1-\delta} .
\end{align*}
\end{proof}


\subsection{Applications}\label{subsec:SOIapplin}

In this section, we exemplify our oracle inequalities for the penalties described in Section~\ref{subsec:decompexamp}.

\subsubsection{Lasso}
Recall the derivations for the Lasso in Section~\ref{subsubsec:polyhedralFgen}. We obtain the following corollary of Proposition~\ref{prop:SOIprobsubg}.

\begin{corollary}\label{cor:SOIlinlasso}
Let the data generated by \eqref{eq:obs} where $\bsxi$ is a zero-mean sub-Gaussian random vector with parameter $\sigma$. Assume that $\XX$ is such that $\max_i\anorm{\XX_i}{2}\leq\sqrt{n}$. Consider the estimators $\thewa$ and $\thvar$, where $J$ is the Lasso penalty \eqref{eq:Jlasso} and $F$ satisfies Assumptions~\ref{assump:F}-\ref{assump:Fmom}. Suppose that $\lambda_n \geq \tau\sigma\sqrt{\frac{2\delta\log(2p)}{n}}$, for some $\tau > 1$ and $\delta > 1$. Then, with probability at least $1-2(2p)^{1-\delta}$, the following holds
\begin{align}\label{eq:SOIEWAlasso}
\tfrac{1}{n}\norm{\XX\thewa-\XX\xx_0}{2}^2 \leq & \inf_{\substack{I \subset \ens{1,\ldots,p} \\ \xx:~\supp(\xx)=I}} \pa{\tfrac{1}{n}\norm{\XX\xx-\XX\xx_0}{2}^2 + \tfrac{\lambda_n^2\pa{\tau+1}^2|I|}{\tau^2\cfac{\Span\ens{\bs a_i}_{i \in I}}{\tfrac{\tau+1}{\tau-1}}^2}} + p\beta ,
\end{align}
and
\begin{align}\label{eq:SOIPENlasso}
\tfrac{1}{n}\norm{\XX\thvar-\XX\xx_0}{2}^2 \leq & \inf_{\substack{I \subset \ens{1,\ldots,p} \\ \xx:~\supp(\xx)=I}} \pa{\tfrac{1}{n}\norm{\XX\xx-\XX\xx_0}{2}^2 + \tfrac{\lambda_n^2\pa{\tau+1}^2|I|}{\tau^2\cfac{\Span\ens{\bs a_i}_{i \in I}}{\tfrac{\tau+1}{\tau-1}}^2}} .
\end{align}
\end{corollary}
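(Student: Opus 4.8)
The plan is to derive Corollary~\ref{cor:SOIlinlasso} as a direct specialization of Proposition~\ref{prop:SOIprobsubg} to the Lasso penalty, reusing the geometric computations already carried out for the Lasso in Section~\ref{subsubsec:polyhedralFgen}. First I would recall that for $J=\norm{\cdot}{1}$ the unit ball $\calC$ is the cross-polytope whose vertex set $\calV$ consists of the $2p$ signed canonical basis vectors $(\pm\bs a_i)_{1\leq i\leq p}$, so that $|\calV|=2p$ and $\max_{\bsv\in\calV}\norm{\XX\bsv}{2}=\max_i\norm{\XX_i}{2}$. Under the stated normalization $\max_i\norm{\XX_i}{2}\leq\sqrt{n}$, the condition $\max_{\bsv\in\calV}\norm{\XX\bsv}{2}\leq\sqrt{n}$ of Proposition~\ref{prop:SOIprobsubg} holds, and the threshold on $\lambda_n$ there becomes exactly $\lambda_n\geq\tau\sigma\sqrt{2\delta\log(2p)/n}$, with the event $\Escr$ holding with probability at least $1-2(2p)^{1-\delta}$.

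Next I would plug the Lasso-specific quantities into the remainder term. From~\eqref{eq:exlasso}, for $\xx$ with $\supp(\xx)=I$ we have $\T_{\xx}=\Span\ens{\bs a_i}_{i\in I}$, $\normOP{\proj_{\T_{\xx}}}{2}{1}=\sqrt{|I|}$, and $J^\circ=\norm{\cdot}{\infty}$, so $J^\circ(\e{\xx})=\norm{\sign(\xx_I)}{\infty}\leq1$ (with equality when $\xx\neq0$). Since the loss is quadratic, $\varphi=\varphi^+=\tfrac12(\cdot)^2$, hence $\nu=1$, and $R_n(\xx,\xx_0)=\tfrac{1}{2n}\norm{\XX\xx-\XX\xx_0}{2}^2$; the remainder~\eqref{eq:remquad} with these substitutions becomes $\tfrac{\lambda_n^2(\tau+1)^2|I|}{2\tau^2\,\cfac{\T_{\xx}}{(\tau+1)/(\tau-1)}^2}$. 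Multiplying the oracle inequality~\eqref{eq:SOIEWA} (resp.~\eqref{eq:SOIPEN}) through by $2$ to pass from $R_n$ to $\tfrac1n\norm{\XX\cdot-\XX\xx_0}{2}^2$ and absorbing the factor of $2$ accordingly yields~\eqref{eq:SOIEWAlasso} (resp.~\eqref{eq:SOIPENlasso}); one should be mildly careful that the $p\beta$ term in the EWA bound also gets multiplied, but this is harmless notational bookkeeping (or one simply states the result with the convention already used in Corollary~\ref{cor:SOIgenlasso}). Finally, replace the infimum over $\xx\in\RR^p$ by the equivalent double infimum over subsets $I\subset\ens{1,\dots,p}$ and vectors $\xx$ with $\supp(\xx)=I$, which is legitimate because $\RR^p=\bigsqcup_I\enscond{\xx}{\supp(\xx)=I}$ partitions the space and $\T_{\xx},\e{\xx}$ depend on $\xx$ only through $I$ and the sign pattern.

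There is essentially no hard step here: the entire content is already contained in Proposition~\ref{prop:SOIprobsubg} together with the Lasso computations of~\eqref{eq:exlasso}. The only point requiring a little care is the bookkeeping of constants when converting between $R_n$ and the squared prediction error, and making sure the probabilistic threshold on $\lambda_n$ and the success probability match those in Corollary~\ref{cor:SOIgenlasso} (they do, since both trace back to the same union-bound-plus-sub-Gaussian-tail estimate, only now with the sharper sub-Gaussian bound of~\eqref{eq:subgmarg} in place of the Bernstein bound, which removes the second-order $\kappa$ correction term in $\lambda_n$). I would therefore present the proof as a two-sentence specialization, citing Proposition~\ref{prop:SOIprobsubg} and~\eqref{eq:exlasso}, and leave the elementary algebra implicit.
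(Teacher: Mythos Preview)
Your proposal is correct and follows exactly the paper's own approach: the paper presents Corollary~\ref{cor:SOIlinlasso} simply as a corollary of Proposition~\ref{prop:SOIprobsubg} after recalling the Lasso computations from Section~\ref{subsubsec:polyhedralFgen}, with no further argument given. Your identification of the minor bookkeeping issue with the factor of $2$ on the $p\beta$ term is also accurate; the paper's stated bound follows the same convention as Corollary~\ref{cor:SOIgenlasso} and leaves this implicit.
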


The remainder term grows as $\tfrac{|I|\log(p)}{n}$. The oracle inequality \eqref{eq:SOIPENlasso} recovers \cite[Theorem~1]{dalalyan16} in the exactly sparse case, and \eqref{eq:SOIPENlasso} the one in \cite[Theorem~4]{sunzhang} (see also \cite[Theorem~11]{koltchinskii11} and \cite[Theorem~2]{dalalyan17}). It is worth mentioning, however, that \cite[Theorem~1]{dalalyan16} handles the inexactly sparse case while we do not.

\subsubsection{Group Lasso}
Recall the notations in Section~\ref{subsec:decompglasso}, and denote $I_{\xx}=\bsupp(\xx)$ the set indexing active blocks in $\xx$. From \eqref{eq:exglasso}, we have
\[
\normOP{\proj_{\T_{\xx}}}{2}{J} = \sqrt{|I_{\xx}|} \qandq J^\circ(\e{\xx}) = \anorm{\e{\xx}}{\infty,2} \leq 1 ,
\]
where the last bound holds as an equality whenever $\xx \neq 0$.

We have the following oracle inequalities as corollaries of Proposition~\ref{prop:SOIprobsubgwgen} and Proposition~\ref{prop:SOIprobgw}.

\begin{corollary}\label{cor:SOIlinglasso}
Let the data generated by \eqref{eq:obs}. Consider the estimators $\thewa$ and $\thvar$, where $F$ satisfies Assumptions~\ref{assump:F}-\ref{assump:Fmom}, and $J$ is the group Lasso \eqref{eq:Jglasso} with $L$ non-overlapping blocks of equal size $K$. Assume that $\XX$ is such that $\max_i\normOP{\XX_{b_i}^\top\XX_{b_i}}{2}{2}\leq n$. 

\begin{enumerate}[label=(\roman*)]
\item $\bsxi$ is a zero-mean sub-Gaussian random vector with parameter $\sigma$: suppose that \linebreak$\lambda_n \geq 3\tau\sigma c_1 \frac{\sqrt{2\log(c_2/\delta)}\pa{\sqrt{K}+\sqrt{2\log(L)}}}{\sqrt{n}}$, for some $\tau > 1$ and $0 < \delta < \min(c_2,1)$, where $c_1$ and $c_2$ are the positive absolute constants in Proposition~\ref{prop:SOIprobsubgwgen}. Then, with probability at least $1-\delta$, the following holds
\begin{align}\label{eq:SOIEWAglasso}
\tfrac{1}{n}\norm{\XX\thewa-\XX\xx_0}{2}^2 \leq & \inf_{\substack{I \subset \ens{1,\ldots,L} \\ \xx:~\bsupp(\xx)=I}} \pa{\tfrac{1}{n}\norm{\XX\xx-\XX\xx_0}{2}^2 + \tfrac{\lambda_n^2\pa{\tau+1}^2|I|}{\tau^2\cfac{\Span\ens{a_j}_{j \in b_i, i \in I}}{\tfrac{\tau+1}{\tau-1}}^2}} + p\beta ,
\end{align}
and
\begin{align}\label{eq:SOIPENglasso}
\tfrac{1}{n}\norm{\XX\thvar-\XX\xx_0}{2}^2 \leq & \inf_{\substack{I \subset \ens{1,\ldots,L} \\ \xx:~\bsupp(\xx)=I}} \pa{\tfrac{1}{n}\norm{\XX\xx-\XX\xx_0}{2}^2 + \tfrac{\lambda_n^2\pa{\tau+1}^2|I|}{\tau^2\cfac{\Span\ens{a_j}_{j \in b_i, i \in I}}{\tfrac{\tau+1}{\tau-1}}^2}} .
\end{align}

\item $\bsxi \sim \calN(0,\sigma^2\Id_n)$: suppose that $\lambda_n \geq \tau\sigma\frac{\sqrt{K}+\sqrt{2\delta\log(L)}}{\sqrt{n}}$, for some $\tau > 1$ and $\delta > 1$. Then, with probability at least $1-L^{1-\delta}$, \eqref{eq:SOIEWAglasso} and \eqref{eq:SOIPENglasso} hold.
\end{enumerate}
\end{corollary}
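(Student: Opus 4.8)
The plan is to specialize the deterministic bounds of Theorems~\ref{thm:SOIEWA} and~\ref{thm:SOIPEN} to the quadratic setting, where $\varphi=\varphi^+=\tfrac12(\cdot)^2$, so that the remainder takes the form \eqref{eq:remquad} with $\nu=1$; the whole task then reduces to checking that the event $\Escr=\ens{\lambda_n\ge\tau J^\circ\bpa{-\XX^\top\nabla F(\XX\xx_0,\yy)}/n}$ holds with the announced probability in each of the two noise models. Since $\nabla F(\XX\xx_0,\yy)=\XX\xx_0-\yy=-\bsxi$ and, by \eqref{eq:exglasso}, the polar gauge of the group Lasso is $J^\circ(\bsom)=\max_{1\le i\le L}\norm{\bsom_{b_i}}{2}$, controlling $\Escr$ boils down to a deviation bound for $J^\circ(\XX^\top\bsxi)=\max_{1\le i\le L}\norm{\XX_{b_i}^\top\bsxi}{2}$. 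Once $\Escr$ is secured, I would substitute $\normOP{\proj_{\T_{\xx}}}{2}{J}=\sqrt{|I_{\xx}|}$ and $J^\circ(\e{\xx})\le 1$ (with $I_{\xx}=\bsupp(\xx)$) into \eqref{eq:remquad}, use $\T_{\xx}=\Span\ens{a_j}_{j\in b_i,\,i\in\bsupp(\xx)}$, rewrite in terms of the squared prediction error, and reorganize the infimum over $\xx\in\RR^p$ into an infimum over active-block patterns $I\subset\ens{1,\dots,L}$ followed by one over $\xx$ with $\bsupp(\xx)=I$, exactly as in Corollary~\ref{cor:SOIlinlasso}.

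For part (i), I would read off the Gaussian width required by Proposition~\ref{prop:SOIprobsubgwgen}: by duality (Lemma~\ref{lem:convex-polar-gauge}), $w\pa{\XX(\calC)}=\EE{}{J^\circ(\XX^\top\bs g)}=\EE{}{\max_{1\le i\le L}\norm{\XX_{b_i}^\top\bs g}{2}}$ with $\bs g\sim\calN(0,\Id_n)$. Each map $\bsv\mapsto\norm{\XX_{b_i}^\top\bsv}{2}$ is $\normOP{\XX_{b_i}^\top}{2}{2}$-Lipschitz, and the normalization $\max_i\normOP{\XX_{b_i}^\top\XX_{b_i}}{2}{2}\le n$ bounds this Lipschitz constant by $\sqrt{n}$ and, via Jensen, gives $\EE{}{\norm{\XX_{b_i}^\top\bs g}{2}}\le\sqrt{\EE{}{\norm{\XX_{b_i}^\top\bs g}{2}^2}}\le\sqrt{Kn}$ (the inner expectation being the trace of $\XX_{b_i}^\top\XX_{b_i}$, at most $K$ times its operator norm). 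Combining these two facts with the Gaussian concentration of Lipschitz functions and a standard sub-Gaussian maximal inequality yields $w\pa{\XX(\calC)}\le c\sqrt{n}\bpa{\sqrt{K}+\sqrt{2\log L}}$ for an absolute constant $c$ (here $c=3$ is a safe choice once the maximal-inequality constants are tracked). Plugging this estimate into the threshold of Proposition~\ref{prop:SOIprobsubgwgen} reproduces the stated condition on $\lambda_n$ and the probability $1-\delta$, which proves \eqref{eq:SOIEWAglasso} and \eqref{eq:SOIPENglasso} in this case.

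For part (ii), a direct union bound is sharper than passing through the Gaussian width. Using $J^\circ(\XX^\top\bsxi)=\max_{1\le i\le L}\norm{\XX_{b_i}^\top\bsxi}{2}$ and a union bound over the $L$ blocks, I would control each term by Gaussian concentration: $\bsxi\mapsto\norm{\XX_{b_i}^\top\bsxi}{2}$ is $\sqrt{n}$-Lipschitz and has mean at most $\sigma\sqrt{Kn}$ (same trace computation, now for $\bsxi\sim\calN(0,\sigma^2\Id_n)$), so $\PP{\norm{\XX_{b_i}^\top\bsxi}{2}\ge\sigma\sqrt{Kn}+t}\le\exp\pa{-t^2/(2n\sigma^2)}$; taking $t=\sigma\sqrt{2\delta n\log L}$ makes each tail at most $L^{-\delta}$, so that $\PP{J^\circ(\XX^\top\bsxi)\ge\sigma\sqrt{n}\bpa{\sqrt{K}+\sqrt{2\delta\log L}}}\le L\cdot L^{-\delta}=L^{1-\delta}$. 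Hence $\lambda_n\ge\tau\sigma\bpa{\sqrt{K}+\sqrt{2\delta\log L}}/\sqrt{n}$ guarantees $\Escr$ with probability at least $1-L^{1-\delta}$, and Theorems~\ref{thm:SOIEWA} and~\ref{thm:SOIPEN} together with the same substitution as in (i) close the argument.

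The main obstacle is the Gaussian-width estimate in part (i): the reduction $w(\XX(\calC))=\EE{}{\max_i\norm{\XX_{b_i}^\top\bs g}{2}}$ is immediate, but obtaining a clean $\sqrt{n}\,(\sqrt{K}+\sqrt{\log L})$-type bound — and in particular matching the explicit constant in the statement — requires combining the Lipschitz and trace estimates with a maximal inequality for the maximum of \emph{correlated} Gaussian norms rather than independent ones. Everything else (the quadratic specialization, the plug-in of the group Lasso data from \eqref{eq:exglasso}, and the reindexing of the oracle infimum) is routine bookkeeping on top of the already-established Theorems~\ref{thm:SOIEWA} and~\ref{thm:SOIPEN} and Propositions~\ref{prop:SOIprobsubgwgen} and~\ref{prop:SOIprobgw}.
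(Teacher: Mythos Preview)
Your proposal is correct and follows essentially the same route as the paper: specialize the deterministic oracle inequalities to the quadratic loss, plug in the group Lasso data $\normOP{\proj_{\T_{\xx}}}{2}{J}=\sqrt{|I_{\xx}|}$ and $J^\circ(\e{\xx})\le 1$, and then control $J^\circ(\XX^\top\bsxi)=\max_i\norm{\XX_{b_i}^\top\bsxi}{2}$ via Proposition~\ref{prop:SOIprobsubgwgen} in (i) and a direct union bound plus Gaussian concentration in (ii). For the step you flag as the obstacle in (i), the paper does exactly what your outline suggests: it uses the union bound together with Gaussian concentration to get the tail estimate $\PP{\max_i\norm{\XX_{b_i}^\top\bs g}{2}\ge\sqrt{Kn}+t}\le L\exp(-t^2/(2n))$, and then converts this into a bound on the expectation by writing $w(\XX(\calC))=\int_0^\infty\PP{\max_i\norm{\XX_{b_i}^\top\bs g}{2}\ge s}\,ds$, splitting at $\kappa=\sqrt{Kn}+\sqrt{2n\log L}$, and integrating the exponential tail to obtain $w(\XX(\calC))\le\kappa+2\sqrt{n}\le 3\kappa$.
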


The first remainder term is on the order $\frac{|I|\pa{\sqrt{K}+\sqrt{2\log(L)}}^2}{n}$. This is similar to the scaling that has been provided in the literature for EWA with other group sparsity priors and noises~\cite{rigo12,luu16}. Similar rates were given for $\thvar$ with the group Lasso in \cite{negahban2010unified,lounici11,vandegeer14}. 

\begin{proof}
{$~$ \vspace*{1pt}}
\begin{enumerate}[label=(\roman*)]
\item This is a consequence of Proposition~\ref{prop:SOIprobsubgwgen}, for which we need to bound 
\[
w(\XX(\calC)) = \EE{}{\max_{i \in \ens{1,\ldots,L}} \norm{\XX_{b_i}^\top \bs g}{2}} .
\]
We first have, for any block $b_i$
\[
\EE{}{\norm{\XX_{b_i}^\top\bs g}{2}} \leq \EE{}{\norm{\XX_{b_i}^\top\bs g}{2}^2}^{1/2} \leq \sqrt{K n} .
\] 
Furthermore,  $\norm{\XX_{b_i}^\top \cdot}{2}$ is Lipschitz continuous with Lipschitz constant $\normOP{\XX_{b_i}}{2}{2} \leq \sqrt{n}$. 
Thus the union bound and Gaussian concentration of Lipschitz functions~\cite{ledouxbook} yield, for any $t > 0$,
\begin{align*}
\PP{\max_{i \in \ens{1,\ldots,L}} \norm{\XX_{b_i}^\top \bs g}{2} \geq \sqrt{K n} + t} 
&\leq \sum_{i=1}^L \PP{\norm{\XX_{b_i}^\top\bsxi}{2} - \EE{}{\norm{\XX_{b_i}^\top\bsxi}{2}} \geq t}
\leq L \exp\pa{-\frac{t^2}{2n}} .
\end{align*}
Let $\kappa = \sqrt{Kn}+\sqrt{2n\log(L)}$. $w(\XX(\calC))$ can be expressed as
\begin{align*}
w(\XX(\calC)) &= \int_{0}^{\infty} \PP{\max_{i \in \ens{1,\ldots,L}} \norm{\XX_{b_i}^\top \bs g}{2} \geq s} ds \leq \int_{0}^{\kappa} ds + \int_{\kappa}^\infty e^{-\frac{(s-\sqrt{Kn})^2-2n\log(L)}{2n}} ds \\
	      &= \kappa + \sqrt{n}\int_{\kappa/\sqrt{n}}^\infty e^{-\frac{(s-\sqrt{K})^2-2\log(L)}{2}} ds \\
	      &\leq \kappa + \sqrt{n}\int_{\kappa/\sqrt{n}}^\infty e^{-\frac{s-\kappa/\sqrt{n}}{2}} ds
	      = \kappa + 2\sqrt{n} \leq 3\kappa .
\end{align*} 

\item The proof follows the lines of Proposition~\ref{prop:SOIprobgw} where we additionally use the union bound. Indeed, 
\begin{align*}
\PP{\max_{i \in \ens{1,\ldots,L}} \norm{\XX_{b_i}^\top \bsxi}{2} \geq \lambda_n n/\tau} 
&\leq \sum_{i=1}^L \PP{\norm{\XX_{b_i}^\top\bsxi}{2} - \EE{}{\norm{\XX_{b_i}^\top\bsxi}{2}} \geq \lambda_n n/\tau - \EE{}{\norm{\XX_{b_i}^\top\bsxi}{2}}} \\
&\leq \sum_{i=1}^L \PP{\norm{\XX_{b_i}^\top\bsxi}{2} - \EE{}{\norm{\XX_{b_i}^\top\bsxi}{2}} \geq \lambda_n n/\tau - \sigma\sqrt{Kn}} \\
&\leq \sum_{i=1}^L \PP{\norm{\XX_{b_i}^\top\bsxi}{2} - \EE{}{\norm{\XX_{b_i}^\top\bsxi}{2}} \geq \sigma\sqrt{2\delta n\log(L)}} \\
&\leq L \exp\pa{-\delta\log(L)} = L^{1-\delta} ,
\end{align*}
where used the Gaussian concentration of Lipschitz functions~\cite{ledouxbook} in the last inequality.
\end{enumerate}
\end{proof}

We observe in passing that another way to prove the oracle inequalities in the sub-Gaussian is to use Dudley's inequality on the sphere in $\RR^K$ after applying a union bound on the $L$ blocks. In addition, in the Gaussian case, the (similar) bound $\lambda_n \geq 3\delta\tau\sigma\frac{\sqrt{K}+\sqrt{2\log(L)}}{\sqrt{n}}$ can be obtained by combining Proposition~\ref{prop:SOIprobgw} and the estimate $w(\XX(\calC)) \leq 3(\sqrt{Kn}+\sqrt{2n\log(L)})$ in the proof of (i). The corresponding probability of success would be at least $1-L^{-9(\delta-1)^2}$.

\subsubsection{Analysis group Lasso}\label{subsubsec:exglasso}
We now turn to the prior penalty \eqref{eq:Janaglasso}. Recall the notations in Section~\ref{subsec:decompanaglasso}, and remind $\Lambda_{\xx}=\bigcup_{i \in \bsupp(\bsD^\top\xx)} b_i$. We assume that $\bsD$ is a frame of $\RR^p$, hence surjective, meaning that there exist $c, d > 0$ such that for any $\bsom \in \RR^p$
\[
d \anorm{\bsom}{2}^2 \leq \norm{\bsD^\top \bsom}{2}^2 \leq c \anorm{\bsom}{2}^2 .
\]
This together with \eqref{eq:exanaglasso}-\eqref{eq:exanaglassoJo} and Cauchy-Schwarz inequality entail
\begin{align*}
\normOP{\proj_{\T_{\xx}}}{2}{J} = \sup_{\anorm{\bsom_{\T_{\xx}}}{2} \leq 1} \norm{\bsD^\top \bsom_{\T_{\xx}}}{1,2}
				&\leq \sqrt{c}\sup_{\anorm{\bsD^\top\bsom_{\T_{\xx}}}{2} \leq 1} \norm{\bsD^\top \bsom_{\T_{\xx}}}{1,2} \\
				&= \sqrt{c}\sup_{\anorm{\bsD_{\Lambda_{\xx}}^\top\bsom_{\T_{\xx}}}{2} \leq 1} \norm{\bsD_{\Lambda_{\xx}}^\top \bsom_{\T_{\xx}}}{1,2} \\
				&= \sqrt{c}\sqrt{|\bsupp(\bsD^\top\xx)|} .
\end{align*}
Note, however, that from \eqref{eq:exanaglasso}, we do not have in general $\anorm{\bsD^+\proj_{\Ker(\bsD^\top_{\Lambda^c_{\xx}})}\bsD\e{\bsD^\top \xx}^{\anorm{}{1,2}}}{\infty,2} \leq 1$. 

With exactly the same arguments to those for proving Corollary~\ref{cor:SOIlinglasso}, replacing $\XX$ by $\XX\bsD$, we arrive at the following oracle inequalities.

\begin{corollary}\label{cor:SOIlinanaglasso}
Let the data generated by \eqref{eq:obs}. Consider the estimators $\thewa$ and $\thvar$, where $F$ satisfies Assumptions~\ref{assump:F}-\ref{assump:Fmom}, and $J$ is the analysis group Lasso \eqref{eq:Janaglasso} with $L$ blocks of equal size $K$. Assume that $\bsD$ is a frame, and $\XX$ is such that $\max_i\normOP{\bsD_{b_i}^\top\XX^\top\XX\bsD_{b_i}}{2}{2}\leq n$.
\begin{enumerate}[label=(\roman*)]
\item $\bsxi$ is a zero-mean sub-Gaussian random vector with parameter $\sigma$: suppose that \linebreak$\lambda_n \geq 3\tau\sigma c_1 \frac{\sqrt{\log(c_2/\delta)}\pa{\sqrt{K}+\sqrt{2\log(L)}}}{\sqrt{n}}$, for some $\tau > 1$ and $0 < \delta < \min(c_2,1)$, where $c_1$ and $c_2$ are the positive absolute constants in Proposition~\ref{prop:SOIprobsubgwgen}. Then, with probability at least $1-\delta$, the following holds
\begin{align}\label{eq:SOIEWAanaglasso}
\tfrac{1}{n}\norm{\XX\thewa-\XX\xx_0}{2}^2 \leq & \inf_{\substack{I \subset \ens{1,\ldots,L} \\ \xx:~\bsupp(\bsD^\top\xx)=I}} \Vast(\tfrac{1}{n}\norm{\XX\xx-\XX\xx_0}{2}^2 \nonumber\\
& + \tfrac{c\lambda_n^2\bpa{\tau\norm{\bsD^+\proj_{\Ker(\bsD^\top_{\Lambda^c_{\xx}})}\bsD\e{\bsD^\top \xx}^{\anorm{}{1,2}}}{\infty,2}+1}^2|I|}{\tau^2\acfac{\Ker(\bsD^\top_{\Lambda^c_{\xx}})}{\tfrac{\tau\norm{\bsD^+\proj_{\Ker(\bsD^\top_{\Lambda^c_{\xx}})}\bsD\e{\bsD^\top \xx}^{\anorm{}{1,2}}}{\infty,2}+1}{\tau-1}}^2}\Vast) + p\beta , 
\end{align}
and
\begin{align}\label{eq:SOIPENanaglasso}
\tfrac{1}{n}\norm{\XX\thewa-\XX\xx_0}{2}^2 \leq & \inf_{\substack{I \subset \ens{1,\ldots,L} \\ \xx:~\bsupp(\bsD^\top\xx)=I}} \Vast(\tfrac{1}{n}\norm{\XX\xx-\XX\xx_0}{2}^2 \nonumber\\
& + \tfrac{c\lambda_n^2\bpa{\tau\norm{\bsD^+\proj_{\Ker(\bsD^\top_{\Lambda^c_{\xx}})}\bsD\e{\bsD^\top \xx}^{\anorm{}{1,2}}}{\infty,2}+1}^2|I|}{\tau^2\acfac{\Ker(\bsD^\top_{\Lambda^c_{\xx}})}{\tfrac{\tau\norm{\bsD^+\proj_{\Ker(\bsD^\top_{\Lambda^c_{\xx}})}\bsD\e{\bsD^\top \xx}^{\anorm{}{1,2}}}{\infty,2}+1}{\tau-1}}^2}\Vast)
\end{align}

\item $\bsxi \sim \calN(0,\sigma^2\Id_n)$: suppose that $\lambda_n \geq \tau\sigma\frac{\sqrt{K}+\sqrt{2\delta\log(L)}}{\sqrt{n}}$, for some $\tau > 1$ and $\delta > 1$. Then, with probability at least $1-L^{1-\delta}$, \eqref{eq:SOIEWAanaglasso} and \eqref{eq:SOIPENanaglasso} hold.
\end{enumerate}
\end{corollary}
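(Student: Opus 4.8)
The plan is to specialize Theorem~\ref{thm:SOIEWA} and Theorem~\ref{thm:SOIPEN} to the penalty $J\eqdef\anorm{\bsD^\top\cdot}{1,2}=\anorm{\cdot}{1,2}\circ\bsD^\top$ and the quadratic loss $F(\bsu,\yy)=\tfrac{1}{2}\anorm{\yy-\bsu}{2}^2$ of \eqref{eq:obs}, and then to check that the event $\Escr=\ens{\lambda_n\geq\tau J^\circ(\XX^\top\bsxi)/n}$ holds with the announced probability by replaying, with $\XX$ replaced by $\XX\bsD$, the concentration arguments used in the proof of Corollary~\ref{cor:SOIlinglasso}. Since $\bsD$ is a frame of $\RR^p$, the lower frame bound $d>0$ makes $\bsD^\top$ injective (equivalently $\bsD$ surjective), so Lemma~\ref{lem:stable}(ii) applies and gives that $J$ obeys \ref{assump:J}, with model subspace $\T_{\xx}=\Ker(\bsD^\top_{\Lambda^c_{\xx}})$ and model vector $\e{\xx}=\proj_{\T_{\xx}}\bsD\e{\bsD^\top\xx}^{\anorm{}{1,2}}$ as in \eqref{eq:exanaglasso}, and polar gauge $J^\circ=\anorm{\bsD^+\cdot}{\infty,2}$ as in \eqref{eq:exanaglassoJo}.

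For the quadratic loss one has $\varphi=\varphi^+=\tfrac{1}{2}\pa{\cdot}^2$, so on $\Escr$ Theorem~\ref{thm:SOIEWA} and Theorem~\ref{thm:SOIPEN} give \eqref{eq:SOIEWA} and \eqref{eq:SOIPEN} with the remainder \eqref{eq:remquad} for $\nu=1$. It then suffices to substitute the quantities explicit for this $J$: the frame upper bound yields $\normOP{\proj_{\T_{\xx}}}{2}{J}\leq\sqrt{c}\sqrt{\abs{\bsupp(\bsD^\top\xx)}}$, exactly as derived just before the statement; the well-conditioning coefficient is the one attached to $\T_{\xx}=\Ker(\bsD^\top_{\Lambda^c_{\xx}})$; and $J^\circ(\e{\xx})=\norm{\bsD^+\proj_{\Ker(\bsD^\top_{\Lambda^c_{\xx}})}\bsD\e{\bsD^\top\xx}^{\anorm{}{1,2}}}{\infty,2}$ is carried through unchanged -- which is where the analysis penalty genuinely differs from the synthesis group Lasso, since $\anorm{\cdot}{1,2}\circ\bsD^\top$ is not a strong gauge and $J^\circ(\e{\xx})$ cannot be replaced by $1$. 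Stratifying the infimum over $\xx$ by the set $I=\bsupp(\bsD^\top\xx)$ of active blocks then produces precisely the right-hand sides of \eqref{eq:SOIEWAanaglasso} and \eqref{eq:SOIPENanaglasso}.

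For the probabilistic step, by \eqref{eq:sigw} and \eqref{eq:exanaglassoJo} one has $J^\circ(\XX^\top\bsxi)=\anorm{\bsD^+\XX^\top\bsxi}{\infty,2}$, a block-wise $\ell_2$ maximum of a linear image of $\bsxi$, which is controlled exactly as the group-Lasso dual norm in the proof of Corollary~\ref{cor:SOIlinglasso}, with $\XX$ replaced by $\XX\bsD$ and the normalization $\max_i\normOP{\XX_{b_i}^\top\XX_{b_i}}{2}{2}\leq n$ replaced by $\max_i\normOP{\bsD_{b_i}^\top\XX^\top\XX\bsD_{b_i}}{2}{2}\leq n$. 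In case (i) one invokes Proposition~\ref{prop:SOIprobsubgwgen} with $\calC$ the unit ball of $J$, whose image has Gaussian width at most $3\bpa{\sqrt{Kn}+\sqrt{2n\log(L)}}$ by the same union-bound-plus-Gaussian-concentration computation as in the proof of Corollary~\ref{cor:SOIlinglasso}(i) -- each block map $\bsxi\mapsto\norm{(\XX\bsD_{b_i})^\top\bsxi}{2}$ being $\sqrt{n}$-Lipschitz with mean at most $\sigma\sqrt{Kn}$ under the normalization -- which yields the stated threshold on $\lambda_n$ and probability $1-\delta$. In case (ii), combining Proposition~\ref{prop:SOIprobgw} with the union bound over the $L$ blocks and Gaussian concentration of Lipschitz functions~\cite{ledouxbook}, exactly as in the proof of Corollary~\ref{cor:SOIlinglasso}(ii), gives $\lambda_n\geq\tau\sigma\bpa{\sqrt{K}+\sqrt{2\delta\log(L)}}/\sqrt{n}$ and probability $1-L^{1-\delta}$. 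On this event $\Escr$, the deterministic bound of the preceding paragraph gives \eqref{eq:SOIEWAanaglasso} and \eqref{eq:SOIPENanaglasso}.

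There is no genuine obstacle here; the only real point of care is the bookkeeping of the composition $J=\anorm{\cdot}{1,2}\circ\bsD^\top$: the well-conditioning coefficient must be evaluated on the \emph{analysis} model subspace $\Ker(\bsD^\top_{\Lambda^c_{\xx}})$, the normalization imposed on $\XX$ must match the operator $\XX\bsD$ (i.e.\ the blocks of $\bsD^+$) that actually enters $J^\circ(\XX^\top\bsxi)$, and $J^\circ(\e{\xx})$ must be retained in the remainder since $\anorm{\cdot}{1,2}\circ\bsD^\top$ is not a strong gauge. All of these are furnished by \eqref{eq:exanaglasso}, \eqref{eq:exanaglassoJo} and Lemma~\ref{lem:stable}(ii), after which the group-Lasso argument of Corollary~\ref{cor:SOIlinglasso} carries over verbatim.
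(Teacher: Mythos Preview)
Your proposal follows exactly the paper's own approach: the paper's proof is the single sentence ``With exactly the same arguments to those for proving Corollary~\ref{cor:SOIlinglasso}, replacing $\XX$ by $\XX\bsD$, we arrive at the following oracle inequalities,'' and you have faithfully unpacked this by specializing Theorems~\ref{thm:SOIEWA}--\ref{thm:SOIPEN} via Lemma~\ref{lem:stable}(ii) and \eqref{eq:exanaglasso}--\eqref{eq:exanaglassoJo}, then rerunning the concentration computations of Corollary~\ref{cor:SOIlinglasso} with the design $\XX\bsD$ and the matching normalization. The extra care you take in noting that $J^\circ(\e{\xx})$ must be retained (because the analysis penalty is not a strong gauge) and in tracking the analysis model subspace $\Ker(\bsD^\top_{\Lambda^c_{\xx}})$ is exactly what the paper records just before the statement.
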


To the best of our knowledge, this result is new to the literature. The scaling of the remainder term is the same as in \cite{luu16} and \cite{rigo12} with analysis sparsity priors different from ours (the authors in the latter also assume that $\bsD$ is invertible).

\subsubsection{Anti-sparsity}
Recall the derivations for the $\ell_\infty$ norm example in Section~\ref{subsubsec:polyhedralFgen}. We have the following oracle inequalities from Proposition~\ref{prop:SOIprobsubg}.

\begin{corollary}\label{cor:SOIlinlinf}
Let the data generated by \eqref{eq:obs} where $\bsxi$ is a zero-mean sub-Gaussian random vector with parameter $\sigma$. Assume that $\XX$ is such that $\max_{i,j}|\XX_{i,j}|\leq 1/p$. Consider the estimators $\thewa$ and $\thvar$, where $F$ satisfies Assumptions~\ref{assump:F}-\ref{assump:Fmom}, and $J$ is the anti-sparsity penalty \eqref{eq:Jlinf}. Suppose that $\lambda_n \geq \tau\sigma\sqrt{2\delta\log(2)}\sqrt{\frac{p}{n}}$, for some $\tau > 1$ and $\delta > 1$. Then, with probability at least $1-2^{-p(\delta-1)+1}$, the following holds
\begin{align}\label{eq:SOIEWAlinf}
\tfrac{1}{n}\norm{\XX\thewa-\XX\xx_0}{2}^2 \leq & \inf_{\substack{I \subset \ens{1,\ldots,p} \\ \xx:~\Isat_{\xx}=I}} \pa{\tfrac{1}{n}\norm{\XX\xx-\XX\xx_0}{2}^2 + \tfrac{\lambda_n^2\pa{\tau+1}^2}{\tau^2\cfac{\enscond{\bxx}{\bxx_{I} \in \RR \sign(\xx_{I})}}{\tfrac{\tau+1}{\tau-1}}^2}} + p\beta ,
\end{align}
and
\begin{align}\label{eq:SOIPENlinf}
\tfrac{1}{n}\norm{\XX\thvar-\XX\xx_0}{2}^2 \leq & \inf_{\substack{I \subset \ens{1,\ldots,p} \\ \xx:~\Isat_{\xx}=I}} \pa{\tfrac{1}{n}\norm{\XX\xx-\XX\xx_0}{2}^2 + \tfrac{\lambda_n^2\pa{\tau+1}^2}{\tau^2\cfac{\enscond{\bxx}{\bxx_{I} \in \RR \sign(\xx_{I})}}{\tfrac{\tau+1}{\tau-1}}^2}} .
\end{align}
\end{corollary}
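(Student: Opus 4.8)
The plan is to obtain Corollary~\ref{cor:SOIlinlinf} as a direct specialization of the polyhedral sub-Gaussian result in Proposition~\ref{prop:SOIprobsubg}, applied to the quadratic loss $F(\bsu,\yy)=\tfrac12\anorm{\yy-\bsu}{2}^2$. For this loss we already noted that $\varphi=\varphi^+=\tfrac12(\cdot)^2$, so the abstract remainder term in \eqref{eq:SOIEWA}--\eqref{eq:SOIPEN} collapses to the quadratic form \eqref{eq:remquad} with $\nu=1$, and $R_n(\xx,\xx_0)=\tfrac1{2n}\anorm{\XX\xx-\XX\xx_0}{2}^2$ (the factor $\tfrac12$ cancels against the one in the statement, which is why the corollary is phrased with $\tfrac1n\anorm{\cdot}{2}^2$ after multiplying both sides by $2$). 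The verification therefore reduces to plugging in the three structural quantities for the $\ell_\infty$ gauge.

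First I would recall from Section~\ref{subsubsec:polyhedralFgen} (the Anti-sparsity paragraph) that for $J=\anorm{\cdot}{\infty}$ the polar gauge is $J^\circ=\anorm{\cdot}{1}$, the unit ball $\calC=[-1,1]^p$ is a polytope whose vertex set is $\calV=\{\pm1\}^p$, so $\abs{\calV}=2^p$, and for any $\xx\neq0$ one has $\normOP{\proj_{\T_\xx}}{2}{\infty}=1$ and $J^\circ(\e{\xx})=\anorm{\sign(\xx_{\Isat_\xx})}{1}/\abs{\Isat_\xx}\le1$, with equality when $\xx\neq0$. Substituting $J^\circ(\e{\xx})=1$ and $\normOP{\proj_{\T_\xx}}{2}{\infty}=1$ into \eqref{eq:remquad} with $\nu=1$ turns the remainder into $\lambda_n^2(\tau+1)^2/\bigl(2\tau^2\,\cfac{\T_\xx}{(\tau+1)/(\tau-1)}^2\bigr)$, and with $\T_\xx=\{\bxx: \bxx_I\in\RR\sign(\xx_I)\}$ for $I=\Isat_\xx$ this is exactly the term appearing in \eqref{eq:SOIEWAlinf}--\eqref{eq:SOIPENlinf} after the overall multiplication by $2$; the case $\xx=0$ is handled by monotonicity of $\varphi^+$ so that using the bound $J^\circ(\e{\xx})\le1$ is legitimate for every $\xx$.

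Next I would check the probabilistic hypothesis of Proposition~\ref{prop:SOIprobsubg}. Its required scaling is $\lambda_n\ge\tfrac{\tau\sigma(\max_{\bsv\in\calV}\anorm{\XX\bsv}{2})\sqrt{2\delta\log\abs{\calV}}}{n}$. Here $\max_{\bsv\in\{\pm1\}^p}\anorm{\XX\bsv}{2}$ must be controlled: writing $(\XX\bsv)_i=\sum_j\XX_{i,j}\bsv_j$ and using $\abs{\XX_{i,j}}\le1/p$ gives $\abs{(\XX\bsv)_i}\le1$, hence $\anorm{\XX\bsv}{2}\le\sqrt n$. Therefore the ``in particular'' clause of Proposition~\ref{prop:SOIprobsubg} applies, and it suffices to take $\lambda_n\ge\tau\sigma\sqrt{2\delta\log(2^p)/n}=\tau\sigma\sqrt{2\delta\log(2)}\sqrt{p/n}$, which is precisely the corollary's condition. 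The success probability is then $1-2\abs{\calV}^{1-\delta}=1-2\cdot2^{p(1-\delta)}=1-2^{-p(\delta-1)+1}$.

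Finally I would assemble the two pieces: on the event of probability at least $1-2^{-p(\delta-1)+1}$, Proposition~\ref{prop:SOIprobsubg} yields \eqref{eq:SOIEWA} and \eqref{eq:SOIPEN} with the quadratic remainder, into which the $\ell_\infty$ computations just made are inserted and the infimum rewritten over saturation supports $I=\Isat_\xx$; multiplying through by $2$ gives \eqref{eq:SOIEWAlinf} for $\thewa$ and \eqref{eq:SOIPENlinf} for $\thvar$. I do not anticipate a genuine obstacle here, since all the heavy lifting (the deterministic oracle inequality, the union-bound concentration for polytopes) is already done; the only mild point of care is the handling of the $\xx=0$ stratum and making sure the constant bookkeeping between $R_n$ and $\tfrac1n\anorm{\cdot}{2}^2$ (the stray factors of $\tfrac12$) is consistent, together with checking that the normalization $\max_{i,j}\abs{\XX_{i,j}}\le1/p$ is exactly what is needed to get $\max_{\bsv\in\calV}\anorm{\XX\bsv}{2}\le\sqrt n$ rather than some weaker bound.
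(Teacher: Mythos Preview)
Your proposal is correct and follows exactly the route the paper intends: the corollary is stated as an immediate consequence of Proposition~\ref{prop:SOIprobsubg}, and you have merely spelled out the three ingredients (the $\ell_\infty$ structural quantities from Section~\ref{subsubsec:polyhedralFgen}, the bound $\max_{\bsv\in\{\pm1\}^p}\anorm{\XX\bsv}{2}\le\sqrt n$ from the normalization, and the arithmetic $\log|\calV|=p\log 2$) that the paper leaves to the reader. Your careful remark about the factor-of-$2$ bookkeeping between $R_n$ and $\tfrac1n\anorm{\cdot}{2}^2$ is on point; note that this same multiplication by $2$ should in principle turn $p\beta$ into $2p\beta$, a harmless constant discrepancy that appears uniformly across the corollaries of Section~\ref{sec:SOIlin}.
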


The first remainder term scales as $\tfrac{p}{n}$ which reflects that anti-sparsity regularization requires an overdetermined regime to ensure good stability performance. This is in agreement with~\cite[Theorem~7]{vaiterimaiai13}. This phenomenon was also observed by~\cite{donoho2010counting} who studied sample complexity thresholds for noiseless recovery from random projections of the hypercube.


\subsubsection{Nuclear norm}
We now turn to the nuclear norm case. Recall the notations of Section~\ref{subsec:decompnuc}. For matrices $\xx \in \RR^{p_1 \times p_2}$, a measurement map $\XX$ takes the form of a linear operator whose $i$th component is given by the Frobenius scalar product
\[
\XX(\xx)_i = \tr((\XX^i)^\top \xx) = \dotp{\XX^i}{\xx}_{\mathrm{F}} ,
\]
where $\XX^i$ is a matrix in $\RR^{p_1 \times p_2}$. We denote $\anorm{\cdot}{\mathrm{F}}$ the associated norm. From \eqref{eq:exnuc}, it is immediate to see that whenever $\xx \neq 0$,
\[
J^\circ(\e{\xx}) = \anormOP{\bs U \bs V^\top}{2}{2} = 1.
\]
Moreover, from \eqref{eq:exnuc}, we have
\[
\normOP{\proj_{\T_{\xx}}}{\mathrm{F}}{*} = \sup_{\xx' \in \T_{\xx}} \frac{\anorm{\xx'}{*}}{\anorm{\xx'}{\mathrm{F}}} = \sup_{\xx' \in \T_{\xx}} \frac{\anorm{\uplambda(\xx')}{1}}{\anorm{\uplambda(\xx')}{2}} \leq \sup_{\xx' \in \T_{\xx}} \sqrt{\rank(\xx')} \leq \sqrt{\min(r,p_1) + \min(r,p_2)} \leq \sqrt{2r}.
\]
To apply Proposition~\ref{prop:SOIprobsubgwgen} and Proposition~\ref{prop:SOIprobgw}, we need to bound  $w(\XX(\calC))$ ($\calC$ is the nuclear ball), or equivalently, to bound
\[
\EE{}{\normOP{\XX^*(\bs g)}{2}{2}}=\EE{}{\normOP{\sum_{i=1}^n\XX^i\bs g_i}{2}{2}} ,  \quad \bs g \sim \calN(0,\sigma^2\Id_n) ,
\] 
which is the expectation of the operator norm of a random series with matrix coefficients. Thus using \cite[Theorem~4.1.1(4.1.5)]{tropp15} to get this bound, and inserting it into Proposition~\ref{prop:SOIprobsubgwgen} and Proposition~\ref{prop:SOIprobgw}, we get the following oracle inequalities for the nuclear norm. Define 
\[
v(\XX) = \max\pa{\normOP{\sum_{i=1}^n \XX^i(\XX^i)^\top}{2}{2},\normOP{\sum_{i=1}^n (\XX^i)^\top\XX^i}{2}{2}} .
\]

\begin{corollary}\label{cor:SOIlinnuc}
Let the data generated by \eqref{eq:obs} with a linear operator $\XX: \RR^{p_1 \times p_2} \to \RR^n$. Assume that $v(\XX) \leq n$. Consider the estimators $\thewa$ and $\thvar$, where $F$ satisfies Assumptions~\ref{assump:F}-\ref{assump:Fmom}, and $J$ is the nuclear norm \eqref{eqjJnuc}. 
\begin{enumerate}[label=(\roman*)]
\item $\bsxi$ is a zero-mean sub-Gaussian random vector with parameter $\sigma$: suppose that \linebreak$\lambda_n \geq 2\tau\sigma c_1 \sqrt{\frac{\log(c_2/\delta)\log(p_1+p_2)}{n}}$, for some $\tau > 1$ and $0 < \delta < \min(c_2,1)$, where $c_1$ and $c_2$ are the positive absolute constants in Proposition~\ref{prop:SOIprobsubgwgen}. Then, with probability at least $1-\delta$, the following holds
\begin{align}\label{eq:SOIEWAnuc}
\tfrac{1}{n}\norm{\XX\thewa-\XX\xx_0}{2}^2 \leq & \inf_{\substack{r \in \ens{1,\ldots,\min(p_1,p_2)} \\ \xx:~\rank(\xx)=r}} \pa{\tfrac{1}{n}\norm{\XX\xx-\XX\xx_0}{2}^2 + \tfrac{2\lambda_n^2\pa{\tau+1}^2 r}{\tau^2\cfac{\T_{\xx}}{\tfrac{\tau+1}{\tau-1}}^2}} + p_1p_2\beta ,
\end{align}
and
\begin{align}\label{eq:SOIPENnuc}
\tfrac{1}{n}\norm{\XX\thvar-\XX\xx_0}{2}^2 \leq & \inf_{\substack{r \in \ens{1,\ldots,\min(p_1,p_2)} \\ \xx:~\rank(\xx)=r}} \pa{\tfrac{1}{n}\norm{\XX\xx-\XX\xx_0}{2}^2 + \tfrac{2\lambda_n^2\pa{\tau+1}^2 r}{\tau^2\cfac{\T_{\xx}}{\tfrac{\tau+1}{\tau-1}}^2}} .
\end{align}

\item $\bsxi \sim \calN(0,\sigma^2\Id_n)$: suppose that $\lambda_n \geq (1+\delta)\tau\sigma\sqrt{\frac{2\log(p_1+p_2)}{n}}$, for some $\tau > 1$ and $\delta > 0$. Then, with probability at least $1-(p_1+p_2)^{-\delta^2}$, \eqref{eq:SOIEWAnuc} and \eqref{eq:SOIPENnuc} hold.
\end{enumerate}
\end{corollary}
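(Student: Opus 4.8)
The statement is the specialization of Proposition~\ref{prop:SOIprobsubgwgen} and Proposition~\ref{prop:SOIprobgw} to $J=\norm{\cdot}{*}$ with the quadratic loss $F(\bsu,\yy)=\tfrac12\norm{\yy-\bsu}{2}^2$ of Section~\ref{sec:SOIlin} (so $\varphi=\varphi^+=\tfrac12(\cdot)^2$ and $R_n\bpa{\xx,\xx_0}=\tfrac1{2n}\norm{\XX\xx-\XX\xx_0}{2}^2$), so the plan is threefold: (a) read off the model constants attached to the nuclear norm, (b) control the Gaussian width $w(\XX(\calC))$ that enters the two propositions, and (c) substitute into the quadratic remainder \eqref{eq:remquad}. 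For (a), \eqref{eq:exnuc} already gives $J^\circ(\e{\xx})=\normOP{\bs U\bs V^\top}{2}{2}=1$ whenever $\xx\neq0$ (the nuclear norm is a strong gauge, so the factor $\tau J^\circ(\e{\xx})$ may be replaced by $\tau$, cf.\ Remark~\ref{rem:SOIEWA}-\ref{rem:SOIEWA2}), and the chain of inequalities displayed just above the corollary gives $\normOP{\proj_{\T_{\xx}}}{\mathrm{F}}{*}\le\sqrt{2\,\rank(\xx)}$. I would then use that the nuclear norm is partly smooth with strata indexed by the rank, so the infimum over $\xx\in\RR^{p_1\times p_2}$ in \eqref{eq:SOIEWA}--\eqref{eq:SOIPEN} reorganizes as $\inf_{r}\inf_{\xx:\,\rank(\xx)=r}$ (as in \eqref{eq:SOIEWAps}), which is the form of \eqref{eq:SOIEWAnuc}--\eqref{eq:SOIPENnuc}; since the ambient space has dimension $p_1p_2$, the temperature term becomes $p_1p_2\beta$.

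Step (b) is where the real work lies. By Lemma~\ref{lem:convex-polar-gauge}, the support function of $\XX(\calC)$ at $\bs g$ equals $J^\circ(\XX^*\bs g)=\normOP{\XX^*(\bs g)}{2}{2}=\normOP{\sum_{i=1}^n\bs g_i\XX^i}{2}{2}$, i.e.\ the spectral norm of a matrix Gaussian series with coefficients $\XX^i\in\RR^{p_1\times p_2}$. Taking expectations and invoking the matrix concentration bound \cite[Theorem~4.1.1(4.1.5)]{tropp15} gives
\[
w(\XX(\calC))=\EE{}{\normOP{\sum_{i=1}^n\bs g_i\XX^i}{2}{2}}\le\sqrt{2\,v(\XX)\log(p_1+p_2)}\le\sqrt{2n\log(p_1+p_2)},
\]
the last inequality by the normalization $v(\XX)\le n$. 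For the Gaussian-noise refinement I also need the Lipschitz constant $\normOP{\XX}{*}{2}$ of $\bsxi\mapsto J^\circ(\XX^\top\bsxi)$ used in Proposition~\ref{prop:SOIprobgw}: expanding $\normOP{\XX}{*}{2}^2=\sup_{\norm{\xx}{*}\le1}\sum_i\dotp{\XX^i}{\xx}_{\mathrm{F}}^2$ along an SVD of $\xx$ and using $\bs v\bs v^\top\preceq\Id_{p_2}$ bounds it by $\normOP{\sum_i\XX^i(\XX^i)^\top}{2}{2}\le v(\XX)\le n$, so $\normOP{\XX}{*}{2}\le\sqrt{n}$.

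For (c) I would substitute $J^\circ(\e{\xx})=1$ and $\normOP{\proj_{\T_{\xx}}}{\mathrm{F}}{*}^2\le2r$ into \eqref{eq:remquad} (with $\nu=1$) and rescale to the $\tfrac1n\norm{\cdot}{2}^2$ normalization, producing the stated remainder $\tfrac{2\lambda_n^2(\tau+1)^2r}{\tau^2\cfac{\T_{\xx}}{\tfrac{\tau+1}{\tau-1}}^2}$. In case (i), the width bound shows that the threshold of Proposition~\ref{prop:SOIprobsubgwgen} is implied by $\lambda_n\ge 2\tau\sigma c_1\sqrt{\log(c_2/\delta)\log(p_1+p_2)/n}$, with the probability $1-\delta$ inherited unchanged. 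In case (ii), the width bound shows $\lambda_n\ge(1+\delta)\tau\sigma\sqrt{2\log(p_1+p_2)/n}$ is sufficient for Proposition~\ref{prop:SOIprobgw}, and running its Gaussian-concentration step with $\EE{}{J^\circ(\XX^\top\bsxi)}=\sigma w(\XX(\calC))\le\sigma\sqrt{2n\log(p_1+p_2)}$ and Lipschitz constant $\normOP{\XX}{*}{2}\le\sqrt n$ makes the slack $\lambda_n n/\tau-\sigma w(\XX(\calC))$ at least $\delta\sigma\sqrt{2n\log(p_1+p_2)}$, so the deviation exponent is at least $\delta^2\log(p_1+p_2)$ and the success probability is $1-(p_1+p_2)^{-\delta^2}$.

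I expect the main obstacle to be step (b), i.e.\ estimating the expected spectral norm of the matrix Gaussian series $\sum_i\bs g_i\XX^i$; since \cite{tropp15} supplies this off the shelf, the remaining difficulty is purely one of bookkeeping, the subtle point being that the single normalization $v(\XX)\le n$ has to serve simultaneously to control the regularization threshold (through $w(\XX(\calC))$) and the concentration exponent (through the Lipschitz constant $\normOP{\XX}{*}{2}$).
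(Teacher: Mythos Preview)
Your proposal is correct and follows the same approach as the paper: derive the nuclear-norm model constants from \eqref{eq:exnuc} and the rank bound on $\normOP{\proj_{\T_{\xx}}}{\mathrm F}{*}$, estimate $w(\XX(\calC))=\EE{}{\normOP{\sum_i\bs g_i\XX^i}{2}{2}}$ via the matrix-series bound of \cite[Theorem~4.1.1(4.1.5)]{tropp15} together with $v(\XX)\le n$, then feed into Propositions~\ref{prop:SOIprobsubgwgen} and~\ref{prop:SOIprobgw}. Your treatment of part~(ii) is in fact more careful than the paper's: you correctly note that an \emph{upper} bound on $w(\XX(\calC))$ cannot be plugged directly into the exponent of Proposition~\ref{prop:SOIprobgw}, and instead rerun the Gaussian-concentration step with the explicit Lipschitz bound $\normOP{\XX}{*}{2}\le\sqrt{v(\XX)}\le\sqrt n$ to obtain the stated probability $1-(p_1+p_2)^{-\delta^2}$; the paper merely says ``inserting it into Proposition~\ref{prop:SOIprobgw}'' without spelling this out.
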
 

The set over which the infimum is taken just reminds us that the nuclear norm is partly smooth (see above) relative to the constant rank manifold (which is a Riemannian submanifold of $\RR^{p_1 \times p_2}$)~\cite[Theorem~3.19]{daniilidis2013orthogonal}. The first remainder term now scales as $\frac{r\log(p_1+p_2)}{n}$. In the iid Gaussian case, we recover the same rate as in \cite[Theorem~3]{dalalyan16} for $\thewa$ and in \cite[Theorem~2]{koltchinskii11} for $\thvar$.

\subsection{Discussion of minimax optimality}\label{subsec:minimaxopt}
In this section, we discuss the optimality of the estimators $\thewa$ and $\thvar$ (we remind the reader that the design $\XX$ is fixed). Recall the discussion on stratification at the end of Section~\ref{subsec:SOIEWA}. Let $\calM_0 \in \Mscr$ be the stratum active at $\xx_0 \in \calM_0$. In this setting, with $\beta=O(1/(pn))$, \eqref{eq:SOIEWAps} and Proposition~\ref{prop:SOIprobgw} ensure that
\begin{align*}
\tfrac{1}{n}\norm{\XX\thewa-\XX\xx_0}{2}^2 &\leq  \frac{(1+\delta)^2\sigma^2w\pa{\XX(\calC)}^2}{n^2}\pa{\sup_{\xx \in \calM_0} \frac{\anormOP{\proj_{\T_{\xx}}}{2}{J}^2\bpa{\tau J^\circ(\e{\xx})+1}^2}{\cfac{\T_{\xx}}{\tfrac{\tau J^\circ(\e{\xx})+1}{\tau-1}}^2}} + O\pa{\frac{1}{n}} \\
\tfrac{1}{n}\norm{\XX\thvar-\XX\xx_0}{2}^2 &\leq  \frac{(1+\delta)^2\sigma^2w\pa{\XX(\calC)}^2}{n^2}\pa{\sup_{\xx \in \calM_0} \frac{\anormOP{\proj_{\T_{\xx}}}{2}{J}^2\bpa{\tau J^\circ(\e{\xx})+1}^2}{\cfac{\T_{\xx}}{\tfrac{\tau J^\circ(\e{\xx})+1}{\tau-1}}^2}} ,
\end{align*}
with high probability. In particular, for a polyhedral gauge penalty, in which case $\calM_0=\T_{\xx_0}$ (see~\cite{vaiterimaiai13}), and under the normalization $\max_{\bsv \calV} \anorm{\XX\bsv}{2} \leq \sqrt{n}$, Proposition~\ref{prop:SOIprobsubg} entails
\begin{align*}
\tfrac{1}{n}\norm{\XX\thewa-\XX\xx_0}{2}^2 &\leq C\frac{2\delta\sigma^2\anormOP{\proj_{\calM_0}}{2}{J}^2\log(|\calV|)}{n}\pa{\sup_{\xx \in \calM_0}\frac{\bpa{\tau J^\circ(\e{\xx})+1}^2}{\cfac{\calM_0}{\tfrac{\tau J^\circ(\e{\xx})+1}{\tau-1}}^2}} \\
\tfrac{1}{n}\norm{\XX\thvar-\XX\xx_0}{2}^2 &\leq  \frac{2\delta\sigma^2\anormOP{\proj_{\calM_0}}{2}{J}^2\log(|\calV|)}{n}\pa{\sup_{\xx \in \calM_0}\frac{\bpa{\tau J^\circ(\e{\xx})+1}^2}{\cfac{\calM_0}{\tfrac{\tau J^\circ(\e{\xx})+1}{\tau-1}}^2}} ,
\end{align*}
with high probability. Thus the risk bounds only depend on $\calM_0$. A natural question that arises is whether the above bounds are optimal, i.e. whether an estimator can achieve a significantly better prediction risk than $\thewa$ and $\thvar$ uniformly on $\calM_0$. A classical way to answer this question is the minimax point of view. This amounts to finding a lower bound on the minimax probabilities of the form
\[
\inf_{\hxx} \sup_{\xx \in \calM_0} \Pr\pa{\tfrac{1}{n}\norm{\XX\hxx-\XX\xx}{2}^2 \geq \psi_n} ,
\] 
where $\psi_n$ is the rate, which ideally, should be comparable to the risk bounds above. A standard path to derive such a lower bound is to exhibit a subset of $\calM_0$ of well-separated points while controlling its diameter, see \cite[Chapter~2]{tsybakovbook} or \cite[Section~4.3]{massartbook}. This however must be worked out on a case-by-case basis.

\begin{example}\label{ex:minimaxlasso}
For the Lasso case, $\calM_0 = \T_{\xx_0}$ is the subspace of vectors whose support is contained in that of $\xx_0$. Let $I=\supp(\xx_0)$ and $s = \anorm{\xx_0}{0}$. Define the set 
\[
\calB_0 = \enscond{\xx \in \RR^p}{\xx_{I} \in \ens{0,1}^{s} \qandq \xx_{I^c}=0} .
\]
We have $\calB_0 \subset \calM_0$ and $\anorm{\xx-\xx'}{0} \leq 2s$ for all $(\xx,\xx') \in \calB_0$. Define $\calF_0 \eqdef \enscond{r\XX\xx}{\xx \in \calB_0}$, for $r > 0$ to be specified later. Due to the Varshamov-Gilbert lemma~\cite[Lemma~4.7]{massartbook}, given $a \in ]0,1[$, there exists a subset $\calB \subset \calB_0$ with cardinality $|\calB| \geq 2^{\rho s/2}$ such that for two distinct elements $\XX\xx$ and $\XX\xx'$ in $\calF_0$
\begin{align*}
\anorm{\XX(\xx-\xx')}{2}^2 &\geq \infkap r^2 \anorm{\xx-\xx'}{2}^2 \geq 2(1-a)\infkap r^2 s , \\
\anorm{\XX(\xx-\xx')}{2}^2 &\leq \supkap r^2 \anorm{\xx-\xx'}{2}^2 \leq 4 \supkap r^2 s) ,
\end{align*}
where
\[
\infkap = \inf_{\xx \in \calM_0} \frac{\anorm{\XX\xx}{2}^2}{\anorm{\xx}{2}^2} \leq \supkap = \sup_{\xx \in \calM_0} \frac{\anorm{\XX\xx}{2}^2}{\anorm{\xx}{2}^2} .
\]
Standard results from random matrix theory ensure that $\infkap > 0$ for a Gaussian design with high probability as long as $n \geq s + C\sqrt{s}$~\cite{TroppChapter14} for some positive absolute constant $C$.

Then choosing $r^2=\frac{c\rho\sigma^2}{4\supkap}$, where $c \in ]0,1/8[$ and $\rho=(1+a)\log(1+a)+(1-a)\log(1-a)$, we get the bounds
\begin{align*}
\anorm{\XX(\xx-\xx')}{2}^2 &\geq \frac{\sigma^2c(1-a)\rho\infkap}{2\supkap} s , \\
\anorm{\XX(\xx-\xx')}{2}^2 &\leq 2\sigma^2c\log(|\calB|) .
\end{align*}
We are now in position to apply \cite[Theorem~2.5]{tsybakovbook} to conclude that there exists $\eta \in ]0,1[$ (that depends on $a$) such that 
\[
\inf_{\hxx} \sup_{\xx \in \calM_0} \Pr\pa{\tfrac{1}{n}\norm{\XX\hxx-\XX\xx}{2}^2 \geq \frac{\sigma^2c(1-a)\rho\infkap}{4\supkap}\frac{s}{n}} \geq \eta .
\]
This lower bound together with Corollary~\ref{cor:SOIlinlasso} show that $\thewa$ (with $\beta=O(1/(pn))$) and $\thvar$ are nearly minimax (up to a logarithmic factor) over $\calM_0$. 

One can generalize this reasoning to get a minimax lower bound over the larger class of $s$-sparse vectors, i.e. $\bigcup\enscond{V=\Span \ens{(\bs a_j)_{1 \leq j \leq p}}}{\dim(V)=s}$, which is a finite union of subspaces that contains $\calM_0$. Let $(a,b) \in ]0,1[^2$ such that $1 \leq s \leq abp$ and $a(-1+b-\log(b)) \geq \log(2)$ \footnote{E.g. take $b=1/(1+e\sqrt[a]{2})$.}, $c \in ]0,1/8[$. Then combining \cite[Theorem~2.5]{tsybakovbook} and \cite[Lemma~4.6 and Lemma~4.10]{massartbook}, we have for $\eta \eqdef \frac{1}{1+(ab)^{\rho s/2}}\pa{1-2c-\sqrt{\frac{2c}{-\rho\log(ab)}}} \in ]0,1[$
\[
\inf_{\hxx} \sup_{\xx \in \calM_0} \Pr\pa{\tfrac{1}{n}\norm{\XX\hxx-\XX\xx}{2}^2 \geq \frac{\sigma^2c\rho(1-\alpha)\infkap}{2\supkap}\frac{s\log(p/s)}{n}} \geq \eta,
\]
where $\rho=-a(-1+b-\log(b))/\log(ab)$, and $\infkap$ and $\supkap$ are now the restricted isometry constants of $\XX$ of degree $2s$, i.e.
\[
\infkap = \inf_{\anorm{\xx}{0} \leq 2s} \frac{\anorm{\XX\xx}{2}^2}{\anorm{\xx}{2}^2} \leq \supkap = \sup_{\anorm{\xx}{0} \leq 2s} \frac{\anorm{\XX\xx}{2}^2}{\anorm{\xx}{2}^2} .
\]
For this lower bound to be meaningful, $\infkap$ should be positive. From the compressed sensing literature, many random designs are known to verify this condition for $n$ large enough compared to $s$, e.g. sub-Gaussian designs with $n \gtrsim s\log(p)$. 

One can see that the difference between this lower bound and the one on $\calM_0$ lies in the $\log(p/s)$ factor, which basically derives from the control over the union of subspaces. The minimax prediction risk (in expectation) over the $\ell_0$-ball were studied in~\cite{rigo11,raskutti11,verzelen12,yezhang10,wang14}, where similar lower bounds were obtained. 
\end{example}

\begin{example}\label{ex:minimaxglasso}
For the group Lasso with $L$ groups of equal size $K$, $\calM_0$ is the subspace group sparse vectors whose group support is included in that of $\xx_0$. Let $s$ be the number of non-zero (active) groups in $\xx_0$. Following exactly the same reasoning as for the Lasso, one can show that the risk lower bound in probability scales as $C\sigma^2sK/n$, which together with Corollary~\ref{cor:SOIlinglasso}, shows that $\thewa$ and $\thvar$ are nearly minimax (up again to a logarithmic factor) over $\calM_0$. One can also derive the lower bound $C\sigma^2s(K+\log(L/s))/n$ over the set of $s$-block sparse vectors. Such minimax lower bound is comparable to the one in~\cite{lounici11}.
\end{example}

\begin{example}\label{ex:minimaxlinf}
Let's consider the $\ell_\infty$-penalty. Denote the saturation support of $\xx_0$ as $\Isat$ and recall the subspace $\T_{\xx_0}$ form \eqref{eq:exlinf}. Thus, $\calM_0=\T_{\xx_0}$ is the subspace of vectors which are collinear to $\sign(\xx_0)$ on $\Isat$ and free on its complement. Observe that $\dim(\calM_0) = p-s+1$, where $s=|\Isat|$. Define the set
\[
\calB_0 = \enscond{\xx \in \RR^p}{\xx_{\Isat} = \sign(\xx_{\Isat}) \qandq \xx_{(\Isat)^c} \in \ens{0,1}^{p-s})} .
\]
By construction, $\calB_0 \subset \calM_0$, and $\anorm{\xx-\xx'}{0} \leq 2(p-s)$ for all $(\xx,\xx') \in \calB_0$.  Thus following the same arguments as for the Lasso example (using again Varshamov-Gilbert lemma and \cite[Theorem~2.5]{tsybakovbook}), we conclude that there exists $\eta \in ]0,1[$ (that depends on $a$) such that 
\[
\inf_{\hxx} \sup_{\xx \in \calM_0} \Pr\pa{\tfrac{1}{n}\norm{\XX\hxx-\XX\xx}{2}^2 \geq \frac{\sigma^2c(1-a)\rho\infkap}{4\supkap}\frac{p-s}{n}} \geq \eta ,
\]
where the restricted isometry constants are defined similarly to the Lasso but with respect to the model subspace $\calM_0$ of the $\ell_\infty$ norm. Again, for a Gaussian design, $\infkap > 0$ with high probability as long as $n \geq (p-s+1) + C\sqrt{p-s+1}$~\cite{TroppChapter14}.

The obtained minimax lower bound is consistent with the sample complexity thresholds derived in~\cite{donoho2010counting} for noiseless recovery from random projections of the hypercube. For a saturation support size small compared to $p$, the bound of Corollary~\ref{cor:SOIlinlinf} comes close to the minimax lower bound.

\end{example}

\begin{example}\label{ex:minimaxnuc}
Let $r=\rank(\xx_0)$, where $\xx_0 \in \RR^{p_1 \times p_2}$, and $p=\max(p_1,p_2)$. For the nuclear norm, $\calM_0$ is the manifold of rank-$r$ matrices. Thus arguing as in \cite[Theorem~5]{koltchinskii11} (who use the Varshamov-Gilbert lemma~\cite{massartbook} to find the covering set), one can show that the minimax risk lower bound over $\calM_0$ is $C\sigma^2 r/n$. In view of Corollary~\ref{cor:SOIlinnuc}, we deduce that $\thewa$ and $\thvar$ are nearly minimax over the constant rank manifolds. 
\end{example}

\appendix
\section{Pre-requisites from convex analysis} \label{sec:convana}

We here collect some ingredients from convex analysis that are essential to our exposition.

\paragraph{Monotone conjugate}
\begin{lemma}
\label{lem:monconj}
Let $g$ be a non-decreasing function on $\RR_+$ that vanishes at $0$. Then the following hold:
\begin{enumerate}[label=(\roman*)]
\item $g^+$ is a proper closed convex and non-decreasing function on $\RR_+$ that vanishes at $0$. 
\item If $g$ is also closed and convex, then $g^{++}=g$.
\item Let $f: t \in \RR \mapsto g(|t|)$ such that $f$ is differentiable on $\RR$, where $g$ is finite-valued, strictly convex and strongly coercive. Then $g^+$ is likewise finite-valued, strictly convex, strongly coercive, and $f^*=g^+ \circ |\cdot|$ is differentiable on $\RR$. In particular, both $g$ and $g^+$ are strictly increasing on $\RR_+$.
\end{enumerate}
\end{lemma}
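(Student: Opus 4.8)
For (i), the point is simply that $g^+$ is by definition the pointwise supremum over $t\ge 0$ of the affine maps $a\mapsto at-g(t)$; hence it is closed and convex on $\RR_+$, and non-decreasing because each such map is. Evaluating at $0$ gives $g^+(0)=\sup_{t\ge 0}\pa{-g(t)}=-\inf_{t\ge 0}g(t)=-g(0)=0$, using that $g$ is non-decreasing with $g(0)=0$. Properness is immediate: $g^+(a)\ge a\cdot 0-g(0)=0>-\infty$ for every $a$, while $g^+(0)=0<+\infty$.

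For (ii), I would reduce to the Fenchel--Moreau theorem on the whole line. Extend $g$ to $h:\RR\to\RR\cup\{+\infty\}$ by $h=g$ on $\RR_+$ and $h\equiv 0$ on $]-\infty,0[$. Convexity of $g$ gives $g(t)\le (t/s)g(s)\to g(0)=0$ as $t\downarrow 0$ for any $s\in\dom g$ with $s>0$, so $h$ is right-continuous at $0$; together with $g$ being closed this makes $h$ a proper closed convex function on $\RR$. A one-line computation from the definitions shows $h^*=g^+$ on $\RR_+$ and $h^*\equiv+\infty$ on $]-\infty,0[$ (using $g^+\ge 0$ from (i)), whence $h^{**}=g^{++}$ on $\RR_+$. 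Since $h^{**}=h$ by Fenchel--Moreau, we conclude $g^{++}=h=g$ on $\RR_+$.

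For (iii), the plan is to transfer all properties through the even extension $f=g\circ\abs{\cdot}$ and invoke the Legendre-type dualities on all of $\RR$, which sidesteps any delicate analysis at the endpoint $0$. First I would record that $f$ is a proper closed convex function (the composition of the non-decreasing convex $g$ with $\abs{\cdot}$) with $\dom f=\RR$ since $g$ is finite-valued, that it is strongly coercive (because $f(t)/\abs{t}=g(\abs t)/\abs t\to+\infty$), and that it is strictly convex, the latter by checking that $f'$ is strictly increasing on $\RR$: $f'=g'$ on $]0,+\infty[$ is strictly increasing (as $g$ is strictly convex) and strictly positive there (as $g$ is also non-decreasing, so $f$ has no downward kink), $f'$ is odd, and $f'(0)=0$. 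Differentiability of $f$ on $\RR$ is assumed; combined with $\dom f=\RR$ it makes $f$ essentially smooth, while finite-valuedness plus strict convexity make it essentially strictly convex. A direct evenness computation gives $f^*=g^+\circ\abs{\cdot}$. Now the standard conjugacy facts (see e.g. \cite{rockafellar1996convex,hiriart1996convex,bauschke2011convex}) apply: $f$ strongly coercive forces $\dom f^*=\RR$, so $f^*$ (hence $g^+$) is finite-valued; $\dom f=\RR$ forces $f^*$ to be strongly coercive, hence so is $g^+$; $f$ essentially strictly convex forces $f^*$ to be essentially smooth, i.e.\ (being finite-valued) differentiable on $\RR$ --- this is the claim on $f^*$, and it also yields $g^+$ differentiable on $]0,+\infty[$; and $f$ essentially smooth forces $f^*$ to be essentially strictly convex, i.e.\ strictly convex on $\RR$, so $g^+$ is strictly convex on $\RR_+$. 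Finally, $g$ is strictly convex and non-decreasing with $g(0)=0$, and $g^+$ is strictly convex (just shown) and non-decreasing (by (i)); a strictly convex non-decreasing function is strictly increasing, since being constant on any subinterval would contradict strict convexity.

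I expect the only real subtlety to lie in the bookkeeping of (iii): one must be careful that the ``essentially smooth / essentially strictly convex'' dualities are invoked \emph{because} $\dom f=\RR$, so there is no boundary of the domain and hence no gradient-blow-up condition left to check, and that the hypothesis ``$f$ differentiable on $\RR$'' (as opposed to merely ``$g$ differentiable on $]0,+\infty[$'') is exactly what forces $f'(0)=0$, i.e.\ rules out a kink of $f$ at the origin --- without it $g^+$ need not be strictly convex (e.g.\ $g(t)=t^2+t$ yields $g^+(a)=\tfrac14\max(0,a-1)^2$, which is constant on $[0,1]$). Everything else is a routine transfer through the identity $f^*=g^+\circ\abs{\cdot}$ together with part~(i).
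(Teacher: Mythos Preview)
Your proof is correct and, for parts (i) and (iii), follows essentially the same route as the paper. In (i) the paper argues identically (supremum of affine functions, $g^+(0)=-\inf g=0$, Fenchel--Young for nonnegativity, and a direct check of monotonicity). In (iii) the paper also transfers everything through the even extension $f=g\circ|\cdot|$ and invokes conjugacy on $\RR$; it simply packages the ``strongly coercive/finite-valued/strictly convex/differentiable $\Leftrightarrow$ same for $f^*$'' step into a single citation (\cite[Corollary~X.4.1.4]{hiriart1996convex}), whereas you unpack it via the essentially smooth/essentially strictly convex duality. Your remark that the hypothesis ``$f$ differentiable on $\RR$'' is precisely what forces $f'(0)=0$ and rules out a flat piece of $g^+$ near $0$ is a nice clarification that the paper leaves implicit.

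The one genuine difference is in (ii): the paper dispatches it by a direct citation of \cite[Theorem~12.4]{rockafellar1996convex}, which is the biconjugation theorem tailored to monotone conjugates on $\RR_+$. Your reduction --- extend $g$ by $0$ on the negatives to a proper closed convex $h$ on $\RR$, check $h^*=g^+$ on $\RR_+$ and $+\infty$ elsewhere, then apply the ordinary Fenchel--Moreau theorem --- is a valid and more self-contained alternative that avoids invoking the specialized one-sided theory. The paper's route is shorter if one accepts the reference; yours makes the mechanism transparent.
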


\begin{proof}
\begin{enumerate}[label=(\roman*)]
\item By~\cite[Proposition~13.11]{bauschke2011convex}, $g^+$ is a closed convex function. We have $\inf_{t \geq 0} g(t)=-\sup_{t \geq 0} t \cdot 0 - g(t) = -g^+(0)$. Since $g$ is non-decreasing and $g(0)=0$, then $g^+(0)=-\inf_{t \geq 0} g(t)=-g(0)=0$. In addition, by \eqref{eq:fenineq}, we have $g^+(a) \geq a \cdot 0 - g(0)=0$, $\forall a \in \RR_+$. This shows that $g^+$ is non-negative and $\dom(g^+) \neq \emptyset$, and in turn, it is also proper.

Let $a,b$ in $\RR_+$ such that $a < b$. Then
\[
g^+(a)-g^+(b) = (\sup_{t \geq 0} t a - g(t)) - (\sup_{t' \geq 0} t' b - g(t')) \leq \sup_{t \geq 0} (t a - g(t) - t b + g(t)) = \sup_{t \geq 0} t(a-b) = 0.
\]
That is, $g^+$ is non-decreasing on $\RR_+$.

\item This follows from~\cite[Theorem~12.4]{rockafellar1996convex}.

\item By definition of $f$, $f$ is a finite-valued function on $\RR$, strictly convex, differentiable and strognly coercive. It then follows from~\cite[Corollary~X.4.1.4]{hiriart1996convex} that $f^*$ enjoys the same properties. In turn, using the fact that both $f$ and $f^*$ are even, we have $g^+$ is strongly coercive, and strict convexity of $f$ (resp. $f^*$) is equivalent to that of $g$ (resp. $g^+$). Altogether, this shows the first claim. We now prove that $g$ vanishes only at $0$ (and similary for $g^+$). As $g$ is non-decreasing and strictly convex, we have, for any $\rho \in ]0,1[$ and $a,b$ in $\RR_+$ such that $a < b$,
\[
g(a) \leq g(\rho a + (1-\rho) b) < \rho g(a) + (1-\rho) g(b) \leq \rho g(b) + (1-\rho) g(b) = g(b) .
\]
\end{enumerate}
\end{proof}

\paragraph{Support function}
The \emph{support function} of $\calC \subset \RR^p$ is
\[
	\sigma_{\calC}(\bsom)=\sup_{\xx \in \calC} \dotp{\bsom}{\xx} .
\]
We recall the following properties whose proofs can be found in e.g.~\cite{rockafellar1996convex,hiriart1996convex}.
\begin{lemma}
\label{lem:suppcompact}
Let $\calC$ be a non-empty set. 
\begin{enumerate}[label=(\roman*)] 
\item $\sigma_\calC$ is proper lsc and sublinear.
\item $\sigma_\calC$ is finite-valued if and only if $\calC$ is bounded.
\item If $0 \in \calC$, then $\sigma_\calC$ is non-negative. 
\item If $\calC$ is convex and compact with $0 \in \interop(\calC)$, then $\sigma_\calC$ is finite-valued and coercive.
\end{enumerate}
\end{lemma}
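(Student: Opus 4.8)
The plan is to verify the four items directly from the definition $\sigma_\calC(\bsom) = \sup_{\xx \in \calC} \dotp{\bsom}{\xx}$, exploiting that a support function is, by construction, a pointwise supremum of linear forms.

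For (i), I would first observe that for each fixed $\xx \in \calC$ the map $\bsom \mapsto \dotp{\bsom}{\xx}$ is linear, hence continuous and convex, so the pointwise supremum $\sigma_\calC$ is convex and lower semicontinuous (a supremum of lsc functions is lsc). Positive homogeneity is immediate: for $t \geq 0$, $\sigma_\calC(t\bsom) = \sup_{\xx \in \calC} t\dotp{\bsom}{\xx} = t\,\sigma_\calC(\bsom)$, which together with convexity is exactly sublinearity. Properness follows from $\calC \neq \emptyset$: fixing any $\xx_0 \in \calC$ gives $\sigma_\calC(\bsom) \geq \dotp{\bsom}{\xx_0} > -\infty$ for every $\bsom$, so $\sigma_\calC$ never takes the value $-\infty$, while $\sigma_\calC(0) = 0 < +\infty$ shows $\dom \sigma_\calC \neq \emptyset$.

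For (ii), the forward implication is a one-liner: if $\calC \subset \enscond{\xx}{\anorm{\xx}{2} \leq r}$, then Cauchy--Schwarz gives $\sigma_\calC(\bsom) \leq r\anorm{\bsom}{2} < +\infty$ for all $\bsom$. For the converse, I would evaluate finiteness of $\sigma_\calC$ at the canonical basis vectors: $\sigma_\calC(\bs a_i) < +\infty$ and $\sigma_\calC(-\bs a_i) < +\infty$ bound the $i$-th coordinate of every point of $\calC$ from above and below, so $\calC$ lies in a bounded box. (Equivalently, one may note $\sigma_\calC = \sigma_{\co\calC}$ and that $\calC$ is bounded iff $\co\calC$ is.) For (iii), $0 \in \calC$ gives $\sigma_\calC(\bsom) \geq \dotp{\bsom}{0} = 0$ for all $\bsom$. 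For (iv), compactness implies boundedness, so finiteness is just (ii); for coercivity, $0 \in \interop(\calC)$ furnishes some $\rho > 0$ with $\enscond{\zz}{\anorm{\zz}{2} \leq \rho} \subset \calC$, whence $\sigma_\calC(\bsom) \geq \sup_{\anorm{\zz}{2} \leq \rho} \dotp{\bsom}{\zz} = \rho\anorm{\bsom}{2} \to +\infty$ as $\anorm{\bsom}{2} \to +\infty$.

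Since every step is elementary, I do not anticipate a genuine obstacle; the only point requiring a moment's care is the converse direction of (ii), where one must remember to test $\sigma_\calC$ against both $+\bs a_i$ and $-\bs a_i$ (the one-sided bound alone does not force boundedness of $\calC$).
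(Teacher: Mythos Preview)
Your proof is correct and entirely self-contained. The paper does not actually prove this lemma: it merely states that the properties ``can be found in e.g.~\cite{rockafellar1996convex,hiriart1996convex}'' and moves on. So rather than matching or diverging from the paper's argument, you have supplied what the paper omits; your direct verification from the definition is the standard textbook route and each of the four items is handled cleanly.
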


\paragraph{Gauges and polars}

\begin{definition}[Polar set]\label{defn:convex-polarset}
  Let $\calC$ be a nonempty convex set. The set $\calC^\circ$ given by
  \begin{equation*}
    \calC^\circ = \enscond{\bseta \in \RR^p}{\dotp{\bseta}{\xx} \leq 1 \quad \forall \xx \in \calC}
  \end{equation*}
  is called the \emph{polar} of $\calC$.
\end{definition}
The set $\calC^\circ$ is closed convex and contains the origin. When $\calC$ is also closed and contains the origin, then it coincides with its bipolar, i.e. $\calC^{\circ\circ}=\calC$.

Let $\calC \subseteq \RR^p$ be a non-empty closed convex set containing the origin. The \emph{gauge} of $\calC$ is the function $\gauge_\calC$ defined on $\RR^p$ by
\begin{equation*}
  \gauge_\calC(\xx) =
  \inf \enscond{\lambda > 0}{\xx \in \lambda \calC} .
\end{equation*}
As usual, $\gauge_\calC(\xx) = + \infty$ if the infimum is not attained. 

Lemma~\ref{lem:convex-gauge} hereafter recaps the main properties of a gauge that we need. In particular, (ii) is a fundamental result of convex analysis that states that there is a one-to-one correspondence between gauge functions and closed convex sets containing the origin. This allows to identify sets from their gauges, and vice versa.
\begin{lemma}\label{lem:convex-gauge}
{~\\}\vspace*{-0.5cm}
\begin{enumerate}[label=(\roman*)]
\item $\gauge_\calC$ is a non-negative, lsc and sublinear function.
\item $\calC$ is the unique closed convex set containing the origin such that
\[
\calC = \enscond{\xx \in \RR^p}{\gamma_\calC(\xx) \leq 1} .
\]
\item $\gamma_\calC$ is finite-valued if, and only if, $0 \in \interop(\calC)$, in which case $\gamma_\calC$ is Lipschitz continuous.
\item $\gamma_\calC$ is finite-valued and coercive if, and only if, $\calC$ is compact and $0 \in \interop(\calC)$.
\end{enumerate}
\end{lemma}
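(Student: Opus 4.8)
The plan is to establish (i)--(iv) in order, since each claim feeds the next; all four are classical properties of Minkowski gauges, so I would invoke \cite{rockafellar1996convex,hiriart1996convex} for the routine verifications and concentrate on the logical skeleton. For (i): $\gamma_\calC(0) = 0$ because $0 \in \calC$; positive homogeneity $\gamma_\calC(t\xx) = t\gamma_\calC(\xx)$ for $t > 0$ comes from rescaling $\lambda$ in the defining infimum; non-negativity is immediate since the infimum ranges over positive $\lambda$; and subadditivity follows from convexity of $\calC$ via the identity $\lambda\calC + \mu\calC = (\lambda+\mu)\calC$ for $\lambda,\mu \geq 0$. Sublinearity is then the conjunction of positive homogeneity and subadditivity, and convexity follows. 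Lower semicontinuity I would derive from the pivotal identity $\enscond{\xx \in \RR^p}{\gamma_\calC(\xx) \leq \lambda} = \lambda\calC$ valid for every $\lambda \geq 0$: this uses closedness of $\calC$ in an essential way, namely that the infimum defining $\gamma_\calC(\xx)$ is attained whenever finite (let $\lambda \downarrow \gamma_\calC(\xx)$ in $\xx/\lambda \in \calC$ and use closedness); all sublevel sets of $\gamma_\calC$ being thereby closed, $\gamma_\calC$ is lsc.

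Claim (ii) is then almost free: setting $\lambda = 1$ in the sublevel-set identity gives $\calC = \enscond{\xx}{\gamma_\calC(\xx) \leq 1}$, and uniqueness follows by applying the same identity to any other closed convex set containing the origin that has the same gauge. For (iii) I would argue both implications through a ball about the origin: if $0 \in \interop(\calC)$ there is $r > 0$ with $B(0,r) \subset \calC$, hence $\gamma_\calC(\xx) \leq \norm{\xx}{2}/r < +\infty$; conversely, a finite-valued convex function on $\RR^p$ is continuous, hence bounded by some $M$ on the closed unit ball, so $B(0,1/M) \subset \enscond{\xx}{\gamma_\calC(\xx) \leq 1} = \calC$ and $0 \in \interop(\calC)$. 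When $\gamma_\calC$ is finite-valued the Lipschitz bound comes cheaply from sublinearity: with $L := \sup_{\norm{\bsu}{2} = 1}\gamma_\calC(\bsu) < +\infty$, homogeneity gives $\gamma_\calC(\bsz) \leq L\norm{\bsz}{2}$ and subadditivity gives $|\gamma_\calC(\xx) - \gamma_\calC(\yy)| \leq \gamma_\calC(\xx - \yy) \vee \gamma_\calC(\yy - \xx) \leq L\norm{\xx - \yy}{2}$. Finally, (iv) combines (iii) with the equivalence ``coercive $\iff$ $\calC$ bounded'': boundedness of $\calC$ inside $B(0,R)$ forces $\gamma_\calC(\xx) \geq \norm{\xx}{2}/R$, hence coercivity, while conversely a lower semicontinuous coercive function has bounded sublevel sets, so $\calC = \enscond{\xx}{\gamma_\calC(\xx) \leq 1}$ is bounded, and being closed it is compact.

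I do not anticipate a genuine obstacle here, as the statement is textbook. The one point that warrants care, and that I would state precisely, is the sublevel-set identity $\enscond{\xx}{\gamma_\calC(\xx) \leq \lambda} = \lambda\calC$: it relies on closedness of $\calC$ (for a non-closed set one only obtains $\lambda\calC \subset \enscond{\xx}{\gamma_\calC(\xx) \leq \lambda} \subset \lambda\,\overline{\calC}$), and it is the single structural fact from which lower semicontinuity in (i), the identification in (ii), and the bounded-sublevel-set argument in (iv) all follow. The only other mildly non-elementary ingredient is the standard theorem that a finite-valued convex function on a finite-dimensional space is continuous, which I would simply quote.
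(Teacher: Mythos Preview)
Your argument is correct and entirely standard. The paper, however, does not actually prove this lemma: its proof reads in full ``See \cite{vaiterimaiai13} for the proof.'' So there is no proof in the paper to compare against; you have supplied what the authors chose to outsource. Your route via the sublevel-set identity $\enscond{\xx}{\gamma_\calC(\xx) \leq \lambda} = \lambda\calC$ is exactly the right organizing principle, and the Lipschitz estimate from sublinearity is the clean way to sharpen mere continuity.

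One small caveat worth tightening: the sublevel-set identity as stated can fail at $\lambda = 0$ when $\calC$ is unbounded, since then $\enscond{\xx}{\gamma_\calC(\xx) = 0}$ is the recession cone of $\calC$ rather than $\{0\} = 0\cdot\calC$. This does not damage the lsc conclusion, because the $0$-sublevel set is $\bigcap_{\lambda > 0}\lambda\calC$ and hence closed as an intersection of closed sets; but you should either restrict the identity to $\lambda > 0$ or note this explicitly.
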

See \cite{vaiterimaiai13} for the proof. 

Observe that thanks to sublinearity, local Lipschitz continuity valid for any finite-valued convex function is streghthned to global Lipschitz continuity. Moreover, $\gamma_\calC$ is a norm, having $\calC$ as its unit ball, if and only if $\calC$ is bounded with nonempty interior and symmetric.
 
We now define the polar gauge.
\begin{definition}[Polar Gauge]\label{defn:polar-gauge}
The polar of a gauge $\gamma_\calC$ is the function $\gamma_\calC^\circ$ defined by
\[
	\gamma_\calC^\circ(\bsom) = \inf\enscond{\mu \geq 0}{\dotp{\xx}{\bsom} \leq \mu \gamma_\calC(\xx), \forall \xx} .
\] 
\end{definition}
An immediate consequence is that gauges polar to each other have the property
\begin{equation}\label{eq:dualineq}
    \dotp{\xx}{\bsu} \leq \gauge_\calC(\xx) \gauge_\calC^\circ(\bsu) \quad \forall (\xx,\bsu) \in \dom(\gauge_\calC) \times \dom(\gauge_\calC^\circ) ,
\end{equation}
just as dual norms satisfy a duality inequality. In fact, polar pairs of gauges correspond to the best inequalities of this type.

\begin{lemma}\label{lem:convex-polar-gauge}
  Let $\calC \subseteq \RR^p$ be a closed convex set containing the origin.
  Then,
  \begin{enumerate}[label=(\roman*),start=2]
  \item $\gauge_\calC^\circ$ is a gauge function and $\gamma_\calC^{\circ\circ}=\gamma_\calC$.
  \item $\gamma_\calC^\circ=\gamma_{\calC^\circ}$, or equivalently
  \[
  \calC^\circ = \enscond{\xx \in \RR^p}{\gamma_\calC^\circ(\xx) \leq 1} .
  \]
\item The gauge of $\calC$ and the support function of $\calC$ are mutually polar, i.e.
  \begin{equation*}
    \gauge_\calC = \sigma_{\calC^\circ} \qandq \gauge_{\calC^\circ} = \sigma_\calC ~.
  \end{equation*}
\end{enumerate}
\end{lemma}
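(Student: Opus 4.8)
The plan is to funnel all three assertions through a single identity: for every closed convex set $\calC' \subseteq \RR^p$ containing the origin, both its polar gauge and the gauge of its polar coincide with its support function, i.e. $\gamma_{\calC'}^\circ = \sigma_{\calC'}$ and $\gamma_{(\calC')^\circ} = \sigma_{\calC'}$. Once this ``master identity'' is available, claims (ii)--(iv) will follow by specialising it to $\calC$ and to $\calC^\circ$ and invoking the bipolar relation $\calC^{\circ\circ} = \calC$ recalled above (valid since $\calC$ is closed, convex and contains $0$).

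To prove the master identity I would simply unwind the definitions. For the first equality, positive homogeneity of $\gamma_{\calC'}$ (Lemma~\ref{lem:convex-gauge}(i)) together with $\calC' = \enscond{\xx}{\gamma_{\calC'}(\xx)\leq 1}$ (Lemma~\ref{lem:convex-gauge}(ii)) shows that a scalar $\mu \geq 0$ satisfies $\dotp{\xx}{\bsom} \leq \mu\,\gamma_{\calC'}(\xx)$ for every $\xx$ if and only if $\dotp{\xx}{\bsom}\leq\mu$ for every $\xx\in\calC'$, that is, if and only if $\sigma_{\calC'}(\bsom)\leq\mu$; taking the infimum over admissible $\mu$ gives $\gamma_{\calC'}^\circ(\bsom)=\sigma_{\calC'}(\bsom)$. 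For the second equality, for $\lambda>0$ one has $\bsom\in\lambda(\calC')^\circ$ iff $\dotp{\bsom}{\xx}\leq\lambda$ for all $\xx\in\calC'$, i.e. iff $\sigma_{\calC'}(\bsom)\leq\lambda$; since $\sigma_{\calC'}\geq 0$ (Lemma~\ref{lem:suppcompact}(iii)), taking the infimum over such $\lambda$ yields $\gamma_{(\calC')^\circ}(\bsom)=\sigma_{\calC'}(\bsom)$.

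The master identity with $\calC'=\calC$ gives $\gamma_\calC^\circ=\sigma_\calC=\gamma_{\calC^\circ}$, which is claim (iii) (the equivalent set description follows from Lemma~\ref{lem:convex-gauge}(ii) applied to $\gamma_{\calC^\circ}$). The identity $\gamma_{\calC^\circ}=\sigma_\calC$ is the first part of (iv); applying the master identity to $\calC'=\calC^\circ$ --- legitimate, as $\calC^\circ$ is closed convex and contains $0$ --- gives $\gamma_{\calC^{\circ\circ}}=\sigma_{\calC^\circ}$, hence $\gamma_\calC=\sigma_{\calC^\circ}$ after using $\calC^{\circ\circ}=\calC$, completing (iv). Finally, (iii) identifies $\gamma_\calC^\circ$ with $\gamma_{\calC^\circ}$, the gauge of the closed convex set $\calC^\circ\ni 0$, so $\gamma_\calC^\circ$ is a gauge function; and applying (iii) with $\calC$ replaced by $\calC^\circ$ yields $\gamma_\calC^{\circ\circ}=(\gamma_{\calC^\circ})^\circ=\gamma_{\calC^{\circ\circ}}=\gamma_\calC$, which is (ii).

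The one genuinely delicate point --- the expected main obstacle --- is the treatment of unbounded $\calC$: the gauge may vanish along recession directions while the support function takes the value $+\infty$ there, so all the above equalities must be read between $[0,+\infty]$-valued functions, and the reductions (reducing feasibility of $\mu$ to the unit sublevel set, swapping $\inf$ and $\sup$, and the step ``$t\xx\in\calC'$ for all $t>0$ when $\gamma_{\calC'}(\xx)=0$ forces $\dotp{\xx}{\bsom}\leq 0$ unless $\sigma_{\calC'}(\bsom)=+\infty$'') need to be checked in those degenerate regimes; this is precisely where $0\in\calC$ and positive homogeneity enter. Apart from that, the argument is a routine unpacking of the definitions of $\gamma_\calC$, $\gamma_\calC^\circ$, $\calC^\circ$ and $\sigma_\calC$ together with the bipolar theorem.
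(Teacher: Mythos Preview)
Your argument is correct: the master identity $\gamma_{\calC'}^\circ=\sigma_{\calC'}=\gamma_{(\calC')^\circ}$ is established cleanly from the definitions, and (ii)--(iv) are then immediate consequences via the bipolar relation. Your discussion of the degenerate cases ($\gamma_{\calC'}(\xx)=0$ along recession directions, $\sigma_{\calC'}(\bsom)=+\infty$ when $\calC'$ is unbounded) is the right thing to flag and is handled with the appropriate care.

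As for comparison with the paper: there is nothing to compare. The paper does not supply its own proof of this lemma; it simply writes ``See \cite{rockafellar1996convex,hiriart1996convex,vaiterimaiai13} for the proof.'' Your write-up is in fact the standard route taken in those references (in particular Rockafellar's Theorem~14.5 and Corollary~15.1.2, and Hiriart-Urruty--Lemar\'echal, Section~C.1.3 of Volume~I), so you have essentially reproduced the classical argument that the paper points to.
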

See \cite{rockafellar1996convex,hiriart1996convex,vaiterimaiai13} for the proof.\\

\section{Expectation of the inner product} \label{sec:avdotp}

We start with some definitions and notations that will be used in the proof. For a non-empty closed convex set $\calC \in \RR^p$, we denote $\mnsel{\calC}$ its minimal selection, i.e. the element of minimal norm in $\calC$. This element is of course unique. For a proper lsc and convex function $f$ and $\gamma > 0$, its Moreau envelope (or Moreau-Yosida regularization) is defined by
\begin{align*}
\env{f}{\gamma}(\xx) &\eqdef \min_{\bxx \in \RR^p} \frac{1}{2\gamma}\norm{\bxx - \xx}{2}^2 + f(\bxx) .
\end{align*}

The Moreau envelope enjoys several important properties that we collect in the following lemma.
\begin{lemma}\label{lem:envfb}
Let $f$ be a finite-valued and convex function. Then 
\begin{enumerate}[label=(\roman*),ref=(\roman*)]
\item $\pa{\env{f}{\gamma}(\bstheta)}_{\gamma > 0}$ is a decreasing net, and $\forall \xx \in \RR^p$, $\env{f}{\gamma}(\xx) \nearrow f(\xx)$ as $\gamma \searrow 0$.
\label{lem:envfb1}
\item $\env{f}{\gamma} \in C^1(\RR^p)$ with $\gamma^{-1}$-Lipschitz continuous gradient.
\label{lem:envfb2}
\item $\forall \xx \in \RR^p$, $\nabla \env{f}{\gamma}(\xx) \to \mnsel{\partial f(\xx)}$ and $\norm{\nabla\env{f}{\gamma}(\xx)}{2} \nearrow \norm{\mnsel{\partial f(\xx)}}{2}$ as $\gamma \searrow 0$.
\label{lem:envfb3}
\end{enumerate}
\end{lemma}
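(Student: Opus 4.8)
The plan is to treat parts (i) and (ii) as essentially classical facts about the Moreau envelope of a finite-valued convex function, recalling only the short arguments, and to concentrate the effort on (iii). For (i), I would first observe that monotonicity in $\gamma$ is immediate: if $0 < \gamma_1 < \gamma_2$ then $\tfrac{1}{2\gamma_1}\norm{\bxx-\xx}{2}^2 \geq \tfrac{1}{2\gamma_2}\norm{\bxx-\xx}{2}^2$ for all $\bxx$, so passing to the infimum yields $\env{f}{\gamma_1}(\xx) \geq \env{f}{\gamma_2}(\xx)$, while taking $\bxx = \xx$ in the definition gives $\env{f}{\gamma}(\xx) \leq f(\xx)$. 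For the limit as $\gamma \searrow 0$, I would pick any $\bseta \in \partial f(\xx)$ (nonempty since $f$ is finite-valued convex on $\RR^p$, hence continuous), insert the subgradient inequality $f(\bxx) \geq f(\xx) + \dotp{\bseta}{\bxx-\xx}$ inside the infimum and minimize the resulting quadratic lower bound over $\bxx - \xx$, obtaining $f(\xx) - \tfrac{\gamma}{2}\norm{\bseta}{2}^2 \leq \env{f}{\gamma}(\xx) \leq f(\xx)$; this gives the convergence, which is increasing by the monotonicity just noted. For (ii), $\env{f}{\gamma}$ is the infimal convolution of the convex functions $f$ and $\tfrac{1}{2\gamma}\norm{\cdot}{2}^2$, hence convex and finite-valued; the infimum is attained at a unique point $\bxx_\gamma$, one has the standard identities $\nabla\env{f}{\gamma}(\xx) = \gamma^{-1}(\xx - \bxx_\gamma)$ and $\gamma^{-1}(\xx - \bxx_\gamma) \in \partial f(\bxx_\gamma)$, and since $\xx \mapsto \bxx_\gamma$ is firmly nonexpansive, $\xx \mapsto \xx - \bxx_\gamma$ is nonexpansive, so $\nabla\env{f}{\gamma}$ is $\gamma^{-1}$-Lipschitz, in particular continuous, whence $\env{f}{\gamma} \in C^1(\RR^p)$ (see e.g.~\cite[Chapter~12]{bauschke2011convex}).

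For (iii), I would set $g_\gamma \eqdef \nabla\env{f}{\gamma}(\xx) = \gamma^{-1}(\xx - \bxx_\gamma) \in \partial f(\bxx_\gamma)$, so that $\xx = \bxx_\gamma + \gamma g_\gamma$, and proceed in three steps. First, a uniform bound: with $\bseta = \mnsel{\partial f(\xx)} \in \partial f(\xx)$, monotonicity of $\partial f$ gives $\dotp{g_\gamma - \bseta}{\bxx_\gamma - \xx} \geq 0$, and since $\bxx_\gamma - \xx = -\gamma g_\gamma$ this reduces to $\dotp{\bseta}{g_\gamma} \geq \norm{g_\gamma}{2}^2$, whence by Cauchy--Schwarz $\norm{g_\gamma}{2} \leq \norm{\mnsel{\partial f(\xx)}}{2}$ for every $\gamma > 0$. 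Second, monotonicity in $\gamma$: for $0 < \gamma_1 < \gamma_2$, monotonicity of $\partial f$ applied to $g_{\gamma_1} \in \partial f(\bxx_{\gamma_1})$ and $g_{\gamma_2} \in \partial f(\bxx_{\gamma_2})$, together with $\bxx_{\gamma_1} - \bxx_{\gamma_2} = \gamma_2 g_{\gamma_2} - \gamma_1 g_{\gamma_1}$, yields $(\gamma_1 + \gamma_2)\dotp{g_{\gamma_1}}{g_{\gamma_2}} \geq \gamma_1\norm{g_{\gamma_1}}{2}^2 + \gamma_2\norm{g_{\gamma_2}}{2}^2$; feeding in Cauchy--Schwarz and rearranging gives $\bpa{\norm{g_{\gamma_2}}{2} - \norm{g_{\gamma_1}}{2}}\bpa{\gamma_1\norm{g_{\gamma_1}}{2} - \gamma_2\norm{g_{\gamma_2}}{2}} \geq 0$, and since $\gamma_1 < \gamma_2$ this is incompatible with $\norm{g_{\gamma_2}}{2} > \norm{g_{\gamma_1}}{2}$, so $\gamma \mapsto \norm{\nabla\env{f}{\gamma}(\xx)}{2}$ is nonincreasing, i.e. nondecreasing as $\gamma \searrow 0$. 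Third, convergence: the family $(g_\gamma)_{0 < \gamma \leq 1}$ is bounded by the first step, so $\bxx_\gamma = \xx - \gamma g_\gamma \to \xx$ as $\gamma \searrow 0$; any cluster point $g^\star$ of $(g_\gamma)$ along some $\gamma_k \searrow 0$ satisfies $g^\star \in \partial f(\xx)$ because the graph of $\partial f$ is closed, and $\norm{g^\star}{2} \leq \norm{\mnsel{\partial f(\xx)}}{2}$ by the uniform bound, so $g^\star = \mnsel{\partial f(\xx)}$ by uniqueness of the minimal-norm element of the nonempty closed convex set $\partial f(\xx)$; since the bounded family has a single cluster point it converges, $\nabla\env{f}{\gamma}(\xx) \to \mnsel{\partial f(\xx)}$, and combining this with the monotonicity of the norm gives $\norm{\nabla\env{f}{\gamma}(\xx)}{2} \nearrow \norm{\mnsel{\partial f(\xx)}}{2}$.

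The routine parts are (i), (ii) and the two monotone-operator estimates in (iii), which are short manipulations of the subgradient inequality and Cauchy--Schwarz. I expect the main obstacle to be assembling (iii): the sign analysis producing the monotone increase of $\norm{\nabla\env{f}{\gamma}(\xx)}{2}$ needs a little care, and the convergence is not a direct computation but a cluster-point argument that must combine the uniform bound, the closedness of the graph of $\partial f$, and the minimality characterization of $\mnsel{\partial f(\xx)}$.
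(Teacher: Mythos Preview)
Your argument is correct. The paper, however, takes a much shorter route: it simply invokes the relevant results from \cite{bauschke2011convex}, namely Proposition~12.32 for (i), Proposition~12.29 for (ii), and, after noting that $f$ is subdifferentiable everywhere so that $\partial f$ is maximal monotone with full domain, Corollary~23.46(i) for (iii). What you do instead is unpack these references into elementary, self-contained arguments: the sandwich bound $f(\xx)-\tfrac{\gamma}{2}\norm{\bseta}{2}^2 \leq \env{f}{\gamma}(\xx) \leq f(\xx)$ for (i), the firm nonexpansiveness of the proximal map for (ii), and for (iii) a direct monotone-operator analysis (the uniform bound $\norm{g_\gamma}{2} \leq \norm{\mnsel{\partial f(\xx)}}{2}$, the factorization yielding monotonicity of $\gamma \mapsto \norm{g_\gamma}{2}$, and a cluster-point argument exploiting closedness of the graph of $\partial f$). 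Your approach has the advantage of being transparent about exactly which structural properties of $\partial f$ are used, and in particular makes it clear why the minimal selection appears as the limit; the paper's approach is more economical and points the reader to a canonical source where these facts are proved in greater generality (for arbitrary proper lsc convex functions, not just finite-valued ones).
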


\begin{proof}
\ref{lem:envfb1} \cite[Proposition~12.32]{bauschke2011convex}. \ref{lem:envfb2} \cite[Proposition~12.29]{bauschke2011convex}. \ref{lem:envfb3} By assumption, $f$ is subdifferentiable everywhere and its subdifferential is a maximal monotone operator with domain $\RR^p$, and the result follows from \cite[Corollary~23.46(i)]{bauschke2011convex}. 
\end{proof}

We are now equipped to prove the following important result\footnote{The result will be proved using Moreau-Yosida regularization. Yet another alternative proof could be based on mollifiers for approximating subdifferentials.}.
\begin{proposition}\label{prop:avdotp}
Let the density $\mu_n$ in \eqref{eq:EWAaggregator}, where 
\begin{enumerate}[label=(\alph*)]
\item $F$ satisfies Assumptions~\ref{assump:F}-\ref{assump:Fmom};
\item $J$ is a finite-valued lower-bounded convex function, and $\exists R > 0$ and $\rho \geq 0$, such that $\forall \xx \in \RR^p$, $\norm{\mnsel{\partial J(\xx)}}{2} \leq R \anorm{\xx}{2}^\rho$; 
\item and $V_n$ is coercive. 
\end{enumerate}
Then, $\forall \bxx \in \RR^p$,
\[
\EE{\mu_n}{\dotp{\mnsel{\partial V_n(\xx)}}{\bxx-\xx}} = -p\beta .
\]
\end{proposition}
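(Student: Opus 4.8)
The plan is to reduce the claim to a divergence-type identity for the smooth, rapidly-decaying density $\mu_n$, and to handle the nonsmoothness of $V_n$ through Moreau–Yosida regularization. First I would fix $\bxx \in \RR^p$ and, for $\gamma > 0$, introduce the smoothed potential $V_n^\gamma \eqdef \env{V_n}{\gamma}$, which by Lemma~\ref{lem:envfb}\ref{lem:envfb2} is $C^1$ with Lipschitz gradient, and the associated density $\mu_n^\gamma(\xx) \propto \exp(-V_n^\gamma(\xx)/\beta)$. Since $V_n$ is coercive and $V_n^\gamma \to V_n$ pointwise from below (Lemma~\ref{lem:envfb}\ref{lem:envfb1}), $V_n^\gamma$ is coercive with the same lower growth for $\gamma$ small, so $\mu_n^\gamma$ is a well-defined probability density. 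For the smooth density one has the elementary integration-by-parts identity
\[
\int_{\RR^p} \nabla V_n^\gamma(\xx)\, \mu_n^\gamma(\xx)\, g(\xx)\, d\xx
= \beta \int_{\RR^p} \mu_n^\gamma(\xx)\, \nabla g(\xx)\, d\xx
\]
for suitable test functions $g$; taking $g(\xx) = \dotp{\bseta}{\bxx - \xx}$ coordinatewise (equivalently, testing against the constant vector field $\bxx - \xx$ and using $\nabla_\xx \dotp{\cdot}{\bxx-\xx}$ arguments), the boundary terms vanish because $\mu_n^\gamma$ decays faster than any polynomial, and we obtain
\[
\EE{\mu_n^\gamma}{\dotp{\nabla V_n^\gamma(\xx)}{\bxx - \xx}} = -p\beta .
\]

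The second step is to pass to the limit $\gamma \searrow 0$ on both sides. On the right-hand side there is nothing to do. On the left, by Lemma~\ref{lem:envfb}\ref{lem:envfb3} we have the pointwise convergence $\nabla V_n^\gamma(\xx) \to \mnsel{\partial V_n(\xx)}$ with monotonically increasing norms, and $\mu_n^\gamma \to \mu_n$ pointwise (since $V_n^\gamma \to V_n$ pointwise and the normalizing constants converge by monotone convergence). To justify interchanging limit and integral I would produce a $\gamma$-uniform integrable majorant for $\dotp{\nabla V_n^\gamma(\xx)}{\bxx-\xx}\,\mu_n^\gamma(\xx)$. Here is where assumptions (a)–(c) of Proposition~\ref{prop:avdotp} enter: writing $\nabla V_n^\gamma(\xx)$ via the chain-rule-type bound for Moreau envelopes of a sum, the gradient of the smooth part $\tfrac1n F(\XX\cdot,\yy)$ contributes a term controlled by $\abs{\dotp{\nabla F(\XX\xx,\yy)}{\XX(\bxx-\xx)}}$, which is exactly the quantity made integrable against $\exp(-F/(n\beta))$ by Assumption~\ref{assump:Fmom}; the nonsmooth part $\lambda_n J$ contributes a term bounded by $\lambda_n \norm{\mnsel{\partial J(\xx)}}{2}\norm{\bxx-\xx}{2} \leq \lambda_n R \norm{\xx}{2}^\rho \norm{\bxx-\xx}{2}$ by hypothesis (b), and this polynomial factor is absorbed by the super-exponential decay of $\mu_n$ coming from coercivity of $V_n$ together with uniform convexity of $F$ (Assumption~\ref{assump:F}), which forces $\varphi(\norm{\XX\xx}{2})$-type growth. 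Dominated convergence then yields $\EE{\mu_n}{\dotp{\mnsel{\partial V_n(\xx)}}{\bxx-\xx}} = -p\beta$.

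The main obstacle I anticipate is the two bookkeeping issues around the Moreau envelope of the \emph{sum} $V_n = \tfrac1n F(\XX\cdot,\yy) + \lambda_n J$: first, $\env{V_n}{\gamma}$ is not the sum of the envelopes, so one must argue directly that $\nabla \env{V_n}{\gamma}$ is sandwiched/controlled by the ingredients above — the cleanest route is to use the resolvent identity $\nabla\env{V_n}{\gamma}(\xx) = \gamma^{-1}(\xx - \prox_{\gamma V_n}(\xx)) \in \partial V_n(\prox_{\gamma V_n}(\xx))$ and the fact that $\prox_{\gamma V_n}(\xx) \to \xx$, combined with local boundedness of $\partial V_n$ and the subgradient estimates of $F$ and $J$; second, establishing the $\gamma$-uniform domination rigorously requires knowing that $\prox_{\gamma V_n}(\xx)$ stays in a bounded neighborhood of $\xx$ uniformly for $\gamma \leq 1$, which follows from coercivity but needs to be spelled out. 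The integration-by-parts step itself and the final dominated-convergence argument are then routine given the majorant.
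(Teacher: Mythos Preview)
Your overall strategy—Moreau--Yosida regularization, then integration by parts, then dominated convergence—is exactly the paper's. The difference is that you apply the envelope to the whole potential, setting $V_n^\gamma \eqdef \env{V_n}{\gamma}$, whereas the paper regularizes only the nonsmooth part, setting $V_n^\gamma(\xx) \eqdef \tfrac{1}{n}F(\XX\xx,\yy)+\lambda_n\,\env{J}{\gamma}(\xx)$. This single choice is where your domination argument breaks down, and it is precisely the obstacle you flag yourself.

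With the paper's choice one has the explicit splitting
\[
\dotp{\nabla V_n^\gamma(\xx)}{\bxx-\xx}
= \tfrac{1}{n}\dotp{\nabla F(\XX\xx,\yy)}{\XX(\bxx-\xx)} + \lambda_n\dotp{\nabla\env{J}{\gamma}(\xx)}{\bxx-\xx},
\]
so the first term is \emph{exactly} the quantity made integrable against $\exp(-F(\XX\xx,\yy)/(n\beta))$ by \ref{assump:Fmom}, and the second is bounded by $\lambda_n\norm{\nabla\env{J}{\gamma}(\xx)}{2}\norm{\bxx-\xx}{2}\leq \lambda_n\norm{\mnsel{\partial J(\xx)}}{2}\norm{\bxx-\xx}{2}\leq \lambda_n R\norm{\xx}{2}^\rho\norm{\bxx-\xx}{2}$, which hypothesis~(b) was designed for. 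If instead you envelope all of $V_n$, the resolvent identity gives only $\nabla\env{V_n}{\gamma}(\xx)\in\partial V_n(p_\gamma(\xx))$ with $p_\gamma(\xx)=\prox_{\gamma V_n}(\xx)$, i.e.\ a decomposition $\tfrac{1}{n}\XX^\top\nabla F(\XX p_\gamma(\xx),\yy)+\lambda_n\eta_\gamma(\xx)$ with $\eta_\gamma(\xx)\in\partial J(p_\gamma(\xx))$. Two things fail: (i) $\eta_\gamma(\xx)$ is \emph{not} the minimal-norm subgradient of $J$, so hypothesis~(b) does not bound it; (ii) the smooth part is now evaluated at $p_\gamma(\xx)$, not at $\xx$, and \ref{assump:Fmom} gives no control of $\dotp{\nabla F(\XX p_\gamma(\xx),\yy)}{\XX(\bxx-\xx)}$ uniformly in $\gamma$. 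Falling back on Cauchy--Schwarz and $\norm{\nabla\env{V_n}{\gamma}(\xx)}{2}\leq\norm{\mnsel{\partial V_n(\xx)}}{2}$ produces the majorant $\norm{\XX^\top\nabla F(\XX\xx,\yy)}{2}\norm{\bxx-\xx}{2}$, which is \emph{not} what \ref{assump:Fmom} controls (it bounds only the inner product, not the product of norms). A related issue is the density bound: with only $J$ regularized one immediately has $\mu_n^\gamma(\xx)\leq C\exp(-F(\XX\xx,\yy)/(n\beta))$ from lower-boundedness of $J$, which pairs with \ref{assump:Fmom}; with the full envelope this inequality is lost. The fix is simply to regularize $J$ alone; then your proof goes through essentially as written and coincides with the paper's.
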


This result covers of course the situation where $J$ fulfills \ref{assump:J}. In this case, since $\partial J(\xx) \subset \calC^\circ$ by Theorem~\ref{thm:decomp}(i), we have $\rho=0$ and $R=\diam(\calC^\circ)$, the diameter of the convex compact set $\calC^\circ$ containing the origin. It can be shown that, when $F(\cdot,\yy)$ is strongly coercive, the coercivity assumption (c) can be equivalently stated as $J_{\infty}(\xx) > 0$, $\forall \xx \in \ker(\XX) \setminus \ens{0}$, where $J_\infty$ is the recession/asymptotic function of $J$; see e.g.~\cite{rockafellar1998variational}. 

\begin{proof}
Let $V^\gamma_n(\xx) \eqdef \tfrac{1}{n}F(\XX\xx,\yy)+\lambda_n \env{J}{\gamma}(\xx)$ and define $\mu^{\gamma}_n(\xx) \eqdef \exp\pa{-V^\gamma_n(\xx)/\beta}/Z$, where $0 < Z < +\infty$ is the normalizing constant of the density $\mu_n$. Assumption~\ref{assump:F} and Lemma~\ref{lem:envfb}\ref{lem:envfb2}-\ref{lem:envfb3} tell us that $V^\gamma_n \in C^1(\RR^p)$ and $\nabla V^\gamma_n(\xx) \to \mnsel{\partial V_n(\xx)}$ as $\gamma \to 0$. Thus
\begin{align*}
\EE{\mu_n}{\dotp{\mnsel{\partial V_n(\xx)}}{\bxx-\xx}} 
		&= \int_{\RR^p} \lim_{\gamma \to 0} \dotp{\mu^\gamma_n(\xx)\nabla V^\gamma_n(\xx)}{\bxx-\xx} d\xx .
\end{align*}
We now check that $\dotp{\mu^\gamma_n(\xx)\nabla V^\gamma_n(\xx)}{\bxx-\xx}$ is dominated by an integrable function. From the definition of the Moreau envelope, we have
\[
V^\gamma_n(\xx) = \min_{\bxx \in \RR^p} \tfrac{1}{n}F(\XX\xx,\yy) + \lambda_n\bpa{J(\xx - \bxx) + \frac{1}{2\gamma}\norm{\bxx}{2}^2} .
\]
From coercivity of $V_n$, the objective in the $\min$ is also coercive in $(\xx,\bxx)$ by \cite[Exercise~3.29(b)]{rockafellar1998variational}. It then follows from \cite[Theorem~3.31]{rockafellar1998variational} that $V^\gamma_n$ is also coercive. In turn, \cite[Theorem~11.8(c) and 3.26(a)]{rockafellar1998variational} allow to assert that for some $a \in  ]0,+\infty[$, $\exists b \in ]-\infty,+\infty[$ such that for all $\gamma > 0$ and $\xx \in \RR^p$
\begin{equation}\label{eq:bndmugam}
\mu^{\gamma}_n(\xx) \leq \exp\pa{-a\anorm{\xx}{2}-b}/Z .
\end{equation}
Lemma~\ref{lem:envfb}-\ref{lem:envfb3} and assumption (b) on $J$ entail that for any $\xx \in \RR^p$,
\[
\norm{\nabla\env{J}{\gamma}(\xx)}{2} \leq \norm{\mnsel{\partial J(\xx)}}{2} \leq R \anorm{\xx}{2}^\rho .
\]
Altogether, we have
\begin{align*}
\abs{\dotp{\mu^\gamma_n(\xx)\nabla V^\gamma_n(\xx)}{\bxx-\xx}} 
		&\leq \mu^\gamma_n(\xx) \pa{\abs{\dotp{\XX^\top \tfrac{1}{n}\nabla F(\XX\xx,\yy)}{\bxx-\xx}}+\lambda_n\norm{\nabla\env{J}{\gamma}(\xx)}{2}\norm{\bxx-\xx}{2}} \\
		&\leq C Z^{-1}\exp\pa{-F(\XX\xx,\yy)/(n\beta)} \abs{\dotp{\tfrac{1}{n}\nabla F(\XX\xx,\yy)}{\XX(\bxx-\xx)}} \\
		& \qquad \qquad + (Z\exp{b})^{-1} \lambda_n R \exp\pa{-a\anorm{\xx}{2}} \norm{\xx}{2}^\rho\norm{\bxx-\xx}{2} ,
\end{align*}
where the constant $C > 0$ reflects the lower-boudedness of $J$.
It is easy to see that the function in this upper-bound is integrable, where we also use \ref{assump:Fmom}. Hence, we can apply the dominated convergence theorem to get
\begin{align*}
\EE{\mu_n}{\dotp{\mnsel{\partial V_n(\xx)}}{\bxx-\xx}} &= \lim_{\gamma \to 0} \int_{\RR^p} \dotp{\mu^\gamma_n(\xx)\nabla V^\gamma_n(\xx)}{\bxx-\xx} d\xx .
\end{align*}
Now, by simple differential calculus (chain and product rules), we have
\begin{align*}
\dotp{\mu^\gamma_n(\xx)\nabla V^\gamma_n(\xx)}{\bxx-\xx} 
		&= -\beta\dotp{\nabla \mu^\gamma_n(\xx)}{\bxx-\xx} \\
		&= -\beta \sum_{i=1}^p \frac{\partial}{\partial \xx_i}\pa{\mu^\gamma_n(\xx)(\bxx_i-\xx_i)} - p\beta \mu^\gamma_n(\xx) .	
\end{align*}
Integrating the first term, we get by Fubini theorem and the Newton-Leibniz formula
\begin{align*}
\int_{\RR^{p-1}} \pa{\int_{\RR} \frac{\partial}{\partial \xx_i}\pa{\mu^\gamma_n(\xx)(\bxx_i-\xx_i)} d\xx_i} &d\xx_1 \cdots d\xx_{i-1} d\xx_{i+1}\cdots d\xx_p \\ 
	&= \int_{\RR^{p-1}} \lrsquare{\mu^\gamma_n(\xx)(\bxx_i-\xx_i)}_{\RR} d\xx_1 \cdots d\xx_{i-1} d\xx_{i+1}\cdots d\xx_p = 0 ,
\end{align*}
where we used coercivity of $V^\gamma_n$ (see \eqref{eq:bndmugam}) to conclude that $\lim_{|\xx_i| \to +\infty} \mu^\gamma_n(\xx)(\bxx_i-\xx_i) = 0$. For the second term, we have from Lemma~\ref{lem:envfb}\ref{lem:envfb1} that $\mu^\gamma_n \to \mu_n$ as $\gamma \to 0$. Thus, arguing again as in \eqref{eq:bndmugam}, we can apply the dominated convergence theorem to conclude that 
\[
\lim_{\gamma \to 0} \int_{\RR^p} \mu^\gamma_n(\xx) d\xx = \int_{\RR^p} \mu_n(\xx) d\xx = 1.
\]
This concludes the proof.
\end{proof}


\paragraph{Acknowledgement.}
This work was supported by Conseil R\'egional de Basse-Normandie and partly by Institut Universitaire de France.

\bibliographystyle{abbrv}
\bibliography{bibli}

\end{document}